\documentclass[11pt,reqno,twoside]{article}
\usepackage{graphicx}%
\usepackage{multirow}%
\usepackage{amsmath,amssymb,amsfonts}%
\usepackage{amsthm}%
\usepackage{mathrsfs}%
\usepackage[title]{appendix}%
\usepackage[dvipsnames]{xcolor}%
\usepackage{textcomp}%
\usepackage{manyfoot}%
\usepackage{booktabs}%
\usepackage{algorithm}%
\usepackage{algorithmicx}%
\usepackage{algpseudocode}%
\usepackage{listings}%
\usepackage{fullpage}
\usepackage{ulem}
\usepackage{cancel}
\makeatletter
\newcommand{\mathsout}[1]{\mathpalette\mathsout@{#1}}
\newcommand{\mathsout@}[2]{%
	\sbox0{$#1#2$}%
	\ooalign{%
		\copy0\cr
		\hidewidth\rule[0.5ex]{\wd0}{0.4pt}\hidewidth\cr
	}%
}
\makeatother

\usepackage{mysty}
\newcommand{\mmse}{{\mathrm{mmse}}}

\title{Physics Matters in PnP: Recovery Guarantees with the MMSE and NN Denoisers}
\author{Tobias Wolf\thanks{UNICAEN, CNRS, GREYC, Caen, France (tobias.wolf@unicaen.fr)} 
\and Jalal Fadili\thanks{ENSICAEN, CNRS, GREYC, Caen, France (Jalal.Fadili@ensicaen.fr)}
\and Jin Guo\thanks{Department of Mathematics, City University of Hong Kong, Hong Kong(jin.guo@my.cityu.edu.hk)}
\and Roy Y. He\thanks{Department of Mathematics, City University of Hong Kong, Hong Kong (royhe2@cityu.edu.hk)} }
\date{\today}
\begin{document}
	\maketitle
	
	\begin{abstract}
		We investigate the forward-backward-splitting version of the Plug and Play (PnP) method for linear ill-posed problems with MMSE estimators as denoisers. In contrast to existing literature, we consider estimators which are specialized for (degenerate) Gaussian noise with possibly non-diagonal covariance matrices. We further deviate from the classical iteration by replacing parts of the descent step with a linear operator that relates the observation noise to that of the MMSE estimator. Under mild assumptions, we derive several properties of the denoiser and prove recovery guarantees of the iteration both pointwise and in the Wasserstein distance of the underlying probability distributions. Crucially, our analysis shows that the denoiser cannot be chosen in a physics-agnostic way, that is, independently of the forward model. We extend our results to the case where the MMSE denoiser is parametrized by a neural network and derive the corresponding recovery bounds.
	\end{abstract}
	
	\tableofcontents
	\allowdisplaybreaks

	\section{Introduction}
	\subsection{Problem statement and literature overview}
	Let $\cX \subset \R^n$  and $A: \cX \to \R^m$ be a linear forward operator. For random vectors $\bX$ and $\bE$  distributed according to, respectively, the probability measures $\mu_\bX \in \cP(\cX)$ and $\mu_\bE \in \cP(\R^m)$, we make an observation $\by$ according to the forward model
	\begin{equation}\label{eq_data_Model}
		\by = A\bx + \be .
	\end{equation}
	Here $\bx$ and $\be$ are realizations of $\bX$ and $\bE$, respectively. Throughout this work, we assume that the noise $\be$ is Gaussian i.e. 
	\begin{assumption}\label{assum:noiseonY}
		$\mu_\bE$ is the zero-mean Gaussian distribution with positive definite covariance matrix $\Sigma_\bE$.
	\end{assumption}
	
	Solving \eqref{eq_data_Model} is, in general, an ill-conditioned problem and suffers additionally from non-uniqueness if $A$ is not injective. In order to overcome these issues, it is necessary to restrict the inversion process to a well-chosen subset of $\cX$ containing the plausible solutions, as well as to stabilize the inversion process via so called regularization methods. 
	
\subsubsection{Model-based approach}	
Many classical solutions address the problem of selecting appropriate solutions by using regularization functionals that encode some prior on the sought-after vectors. The general principle is to promote vectors with some notion of low-complexity, or equivalently, to penalize, even with a hard constraint, vectors with undesired structures as given by the model knowledge. A multitude of regularizers have been designed, among which the most popular are total variation and its extensions~\cite{rudin1992nonlinear,chambolle1997image, TGV}, as well as low-rank, sparsity and group sparsity promoting penalties. See \cite{mallat1999wavelet,scherzer2009variational,starck2010sparse,vaiter2015low} for comprehensive overviews in finite and infinite dimensions. 
In order to approximate solutions of \eqref{eq_data_Model}, these regularizers are often employed as parts of an energy functional that is minimized \cite{chambolle1997image, Donoho2006, AubertAujol2008}, within iterative methods in which a sequence of variational problems is solved \cite{orig_breg, Goldstein2009, VeseDeblurring}, or implicitly within iterative methods that aim to approximate a specific minimizer of a discrepancy functional \cite{Leitao2009, Sprung2018, QJin2023}.\\
Model-based regularization has produced a wealth of theoretical recovery guarantees for a large variety of regularizers; see \cite{engl1996regularization,scherzer2009variational,benning2018modern,vaiter2015low}. They are explainable, and the failure cases are well-known. However, the choice of the prior and thus of the regularization is nontrivial, and it might not fully capture the diversity and complexity of the objects to recover. This has led to a tremendous shift toward data-driven methods where the prior is either partly or completely learned from some training data.

\subsubsection{Data-driven approach}
In order to circumvent some of the issues discussed above when using model-based priors, it was proposed to learn  this prior information from data, rather than by handcrafting models. This can be done either partly or completely, explicitly or implicitly. This trend has started even before the advent of ``deep learning''. Our review here is by no means exhaustive in a literature that has been constantly and rapidly evolving in recent years. Observe that completely separating existing literature into model-based versus data-driven is debatable. We invite the reader to refer to the excellent and comprehensive reviews to~\cite{arridge_solving_2019,ongie_deep_2020} as well as to more recent surveys on specific themes~\cite{EldarModelReview21,monga_algorithm_2021,mukherjee2021end,kamilov2023plug}. See also the volume \cite{HandbookIP25}.
	
A natural, yet naive, way is to learn a neural network (NN) that approximates an analytic inverse to the forward operator $A$; see~\cite{jin2017deep,zhu2018image,Petersen23} and others. This approach is fully supervised but  suffers from a few shortcomings. First, it does not explicitly take into account the physics of the problem (the forward model). Attempts to overcome this issue include the work \cite{schwab2019deep}. Second, finding an appropriate NN architecture to approximate an inverse operator well is not an easy task. Moreover, this has to be done robustly to stay immune to perturbation by the noise, so that one may possibly derive stability and recovery guarantees. In fact, a continuous inverse of $A$ rarely exists which may jeopardize the whole approach. Generally, directly learning an inversion map lacks a deep understanding of its recovery guarantees, with a notable exception of the recent work of \cite{Petersen23}.

\subsubsection{Hybrid approach}
To overcome some of the shortcomings discussed for the previous learning procedure, a variety of hybrid approaches that mix model- and data-driven methods were devised with the aim of combining the best of both worlds. In the following, we review some of these.
	
\paragraph{Learning the regularization functional}
This approach consists of parametrizing the regularizing functional, typically involving a neural network, which is then learned either partly or completely; see e.g.~\cite{lunz2018adversarial,kobler2020total,prost_learning_2021,mukherjee2023learned}. The training is typically cast as a bilevel programming problem, which, generally, is very challenging to solve. Regularization guarantees were obtained in this setting in~\cite{li_nett_2020,mukherjee2020learned,schwab2019deep,Shumaylov2024}. However, this does not give guarantees about the real sought-after vector.

\paragraph{Plug-and-Play}
A now widely used approach is Plug-and-Play (PnP for short)~\cite{venkatakrishnan_plug-and-play_2013, romano2017little, kamilov2023plug, huraultthesis, TanHNA26}. It is inspired by operator splitting schemes when applied to solve variational model-based problems. These splitting schemes typically involve the so-called (explicit) proximal mapping associated to the regularizer. The idea is now to replace the proximal operator with any denoising operator, hence the name `Plug-and-Play'. Observe that this means that the regularization functional is now implicitly defined. By parameterizing it with a neural network, the denoiser is learned from pairs of noisy-noiseless data once and for all. In this case, the forward model is well captured by the data fidelity in the splitting algorithm. Recalling that the denoiser induces an implicit prior and adopting a Bayesian perspective, several authors have combined PnP with Monte-Carlo simulation, for instance discretization of Langevin diffusion, to solve inverse problems~\cite{laumont2022bayesian, guo2019agem, kadkhodaie2021stochastic, song2019generative, ho2020denoising, ehrhardt2024proximal}.

\paragraph{Unrolling/Unfolding}
The starting point of unfolding, also referred to as unrolling, is to convert an iterative optimization algorithm into a neural network such that each step (implicit or explicit) of the optimization algorithm resembles a layer of a neural network, and then replace some steps with learned ones. In a nutshell, unrolling amounts to learning parts of optimization algorithms. Its time-continuous version can be seen as an optimal control problem. The idea goes back to \cite{gregor2010learning} for the ISTA algorithm in sparse recovery. Since then, it has received a tremendous interest; see \cite{gregor2010learning,mukherjee2021end,adler2017solving,tamir2019unsupervised,bertocchi2020deep} and the reviews in \cite{monga_algorithm_2021,EldarModelReview21}. Despite its success and popularity, unrolling has a few shortcomings nevertheless. Its computational feasibility is directly tied to the number of iterations, where one can then have as many neural networks as iterations. Training is also done for one particular optimization/reconstruction algorithm and for each forward operator. This approach lacks an understanding of its recovery guarantees, which is actually a very challenging problem.
	
\paragraph{Generative models}
Instead of using a neural network to learn a penalty functional, it can be used to restrict a prior distribution through a generative model (GAN, variational auto-encoder, diffusion model, etc.) that learns to generate samples similar to some training dataset~\cite{song2021solving,dimakis2022deep,duff2024regularising,chung2022improving,tewari2023diffusion, habring2022generative}. The advantage is that for many problems, the class of vectors to recover lies on a low-dimensional manifold (or rather a  union of finitely many of them) and the training of a generative model amounts to learning a map from a low-dimensional space (the latent space) to the original signal space. However, generative models can suffer from memorization and hallucination, and recovery guarantees have to be properly rethought for such models.
	
\paragraph{Deep Inverse Prior}
In the case where no training data is available, a \textit{self-supervised} method, yet based on a neural network, can be considered. The Deep Inverse Prior (DIP), or Deep Image Prior, was introduced in~\cite{ulyanov_deep_2020} for simple image processing tasks (denoising, super-resolution, and inpainting). The central idea in the DIP is to train a neural network that acts as a generator with a randomly generated input.  It can be thought of as a latent random variable in a lower dimension. The hope is that the architecture of this neural network will induce some ``implicit regularization'' such that more and more detailed content is added during training before eventually overfitting the noisy image. Its theoretical recovery guarantees have been studied extensively in a series of recent papers \cite{buskulic2024convergenceJournal,buskulic2023convergence,buskulic2024descentarxiv,Buskulic25}.

\subsection{Contributions}
Our goal is to recover $\bx$ given $\by$ in \eqref{eq_data_Model} with provable guarantees using the Plug-and-Play (PnP) paradigm \cite{venkatakrishnan_plug-and-play_2013, romano2017little, kamilov2023plug, huraultthesis, TanHNA26}. This approach is inspired by operator splitting schemes, which one often employs to solve variational, model-based problems. In this work, we investigate the forward-backward-splitting (FBS), which in its proximal form is given by the iteration 
	\begin{equation}\label{eq:FBS_general}
		\bx_{k+1} = \prox{\gamma J}{\bx_k -\gamma \nabla_{\bx} F(A\bx_k,\by)},
	\end{equation}
	for some step size $\gamma > 0$, a fidelity term $F$ that is chosen according to the forward model \eqref{eq_data_Model}, and a regularizer $J$, whose proximal mapping is defined by 
	\begin{equation*}
		\prox{\gamma J}{x} \eqdef \Argmin\limits_{z \in \bbR^n} \left\{\frac{1}{2\gamma}\norm{x-z}^2 + J(z)\right\}.
	\end{equation*}
	Choosing $F(A\bx,\by) = \frac{1}{2}\norm{A\bx-\by}^2$ in \eqref{eq:FBS_general} yields \begin{equation}\label{eq:FBS_L2}
		\bx_{k+1} = \prox{\gamma J}{\bx_k -\gamma A^\top(A\bx_k-\by)}.
	\end{equation}
	The idea of PnP is to replace the proximal operator with a suitably chosen operator $\widehat{D}: \R^n \to \R^n$, hence the name `Plug-and-Play'. Using the data model~\eqref{eq_data_Model}, one can then write \eqref{eq:FBS_L2} as \begin{equation}\label{eq:pnp_general}
		\bx_{k+1} = \widehat{D}\pa{\bx + (I_n - \gamma A^\top A)(\bx_k - \bx) + \gamma A^\top\be} .
	\end{equation}
	Therefore, one can intuitively interpret the argument of $\widehat{D}$ as a noisy version of $\bx$, which is the reason why $\widehat{D}$ is often called a "denoiser" in the literature. This intuition is rigorous and exact for the special instance where $A = I_n$, and by setting $\gamma=1$, in which case \eqref{eq:pnp_general} simplifies to
	\begin{equation}\label{eq:pnpden}
		\bx_{k} = \widehat{D}\pa{\bx + \be} , \forall k \in \N .
	\end{equation}
	In particular, the iteration then reduces to a denoising step with noise $\be$.\\
	Note that the iteration \eqref{eq:pnp_general} does not require an explicit regularization functional.  While \eqref{eq:FBS_L2} is tied to the computation of minimizers of the functional $J(\bx) + F(A\bx,\by)$ (see e.g. \cite{BeckFirstOrderOpti}), the PnP iteration \eqref{eq:pnp_general} might not necessarily solve an optimization problem. After all, why should it? In fact, a fixed-point perspective is more appropriate in this setting as we will argue later. Consequently, the argument of the denoiser does not need to be chosen as a gradient-descent step for the fidelity. We therefore consider the more general form \begin{equation}\label{eq:pnpiter}
		\bx_{k+1} = \widehat{D}\pa{\bx_k + \gamma B(\by - A \bx_k)}  \quad \text{ with } \gamma > 0,
	\end{equation} 
	where $B \in \R^{n \times m}$ and its choice gives an additional degree of freedom to the user. For instance, typical choices are $B=A^\top$, $B=A^+$ or $A^\top(A^\top A + \delta I_n)^{-1}$, $\delta > 0$ (Tikhonov regularization). Such choices were used in e.g., \cite{Kim16,Jin17,Zhou22,Wang20,Nguyen26}. Motivated by making the training phase independent of the observation noise covariance matrix $ \Sigma_{\bE}$, \cite{Leterme25} proposed and studied the very interesting choice $B=A^\top \Sigma_{\bE}^{-1/2}$. 
	
	Though general choices of $B$ do not necessarily entail that \eqref{eq:pnp_general} is tied to an optimization problem, this property still holds for some specific choices mentioned above. The case $B=A^\top C$, for $C$ symmetric and positive semidefinite corresponds to gradient descent on $\frac{1}{2}\norm{C^{1/2}(\by - A~\cdot)}^2$. $B=A^+$ corresponds to gradient descent on $\frac{1}{2}\dist(\cdot,\cC)^2$ where $\cC = \enscond{\bz \in \bbR^n}{\by = A\bz}$.
		
	Since the iterates of \eqref{eq:pnpiter} should approximate a solution $\bx$ of \eqref{eq_data_Model}, we eventually want to have $\by - A\bx_k \approx \be$, meaning that $B$ ideally acts as a transformation from the observation noise $\be$ to the type of noise that can be optimally removed by the denoiser $\widehat{D}$.\\
	
	Our analysis focuses on the class of MMSE denoisers of $\bX$ given a random vector $\bZ$. These are defined as \begin{equation}\label{eq:mmse}
		\widehat{D} \in \Argmin_{D \in \scrD(\R^n,\R^n)} \left\{\cR(D) \right\},
	\end{equation} with \begin{equation}\label{eq:mmse_objective}
		\cR(D) \eqdef \Expect{}{\norm{D(\bZ)-\bX}^2},
	\end{equation}
	where the expectation in \eqref{eq:mmse_objective} is taken with respect to the joint distribution of $\bX$ and $\bZ$, and $\scrD(\R^n,\R^n)$ is some subspace of measurable functions\footnote{Borel measurability is not superfluous here; see the counterexample in \cite{Wise85}.}.\\
	
The main contributions of this work is to derive pointwise and distributional recovery error bounds of the iteration \eqref{eq:pnpiter}, where the latter one is measured with respect to the Wasserstein distance. To this end, 
\begin{itemize}
\item We establish several regularity and stability properties of the MMSE denoisers. In particular, we show cocoercivity, which gives a rigorous justification for the firm nonexpansiveness that is enforced in many PnP methods. 
\item We derive error estimates for the PnP iteration with MMSE denoisers from a pointwise perspective and with respect to the Wasserstein distance of the involved probability measures. The latter of those results shows that the transformation $B$ should be chosen such that the transformed noise $B\bE$ is distributed similarly to the noise that is applied to the data when constructing the denoiser, i.e. to $\bZ-\bX$. Moreover,  for optimal performance, the denoiser should be chosen based on the conditioning of $BA$. These results strongly contrast the conventional choice $B = A^\top$, which implicitly assumes that the denoiser in PnP algorithms is agnostic to the forward problem. 
\item We further extend our results to the case where the MMSE denoiser is approximated by a neural network and derive quantitative bounds for the necessary width and depth of the network. Notably, all of our recovery guarantees do not necessarily require non-expansiveness of the denoiser, but rather impose a constraint on the conditioning of the matrix $BA$. 
\end{itemize}


\subsection{Relation to prior work}
Recovery guarantees for PnP remain relatively scarce. While the work~\cite{EbnerHaltmeier24} develops a first regularization theory, it makes strong assumptions on the denoiser which in practical applications may not be satisfied in a computationally feasible way. In particular, the denoiser is assumed to be contractive, and limiting solutions are characterized only for denoisers that essentially behave like proximal mappings. Similarly, in~\cite{hurault_gradient_2021, hurault_nonconvex_2022} the denoiser is designed to be a proximal map for a potentially nonconvex functional.  For denoisers in an $L^\infty$-neighborhood of the identity and forward operators satisfying the restricted eigenvalue conditions, convergence results of the PnP-iteration to a fixed point of the denoiser were established. However, this work does not assess whether these fixed points constitute useful approximations of the sought-for solution. \\
	The PnP algorithm with MMSE denoisers was considered in \cite{laumont2023maximum, xu2020provable} from an optimization perspective. In particular, the authors of \cite{laumont2023maximum} showed that the PnP iteration using approximations of the MMSE denoiser eventually remains in a neighborhood of the maximum a posteriori (MAP) solution for bounded trajectories.\\
	The matrix $B$ in \eqref{eq:pnpiter}, which replaces $A^\top$ in classical forward-backward splitting has been used for a PnP method in \cite{Leterme25}. In this work, the authors choose $B$ such that it has a whitening effect on the noise used for training denoiser. That way, the denoiser becomes independent of the training noise. For MMSE denoisers, it was pointed out in \cite{Nguyen26} that by incorporating such $B$ into the denoiser, one can construct a physics-aware estimator in which the noise is projected onto the image of the forward operator $A$. For certain imaging tasks, such as inpainting, such denoisers are more stable than physics-agnostic ones. Moreover, the use of transformation operators is a popular tool to match the data representation with the implicit prior given by the proximal operator. This interpretation has inspired the authors of \cite{Jin17} to propose a neural network architecture which is based on an unrolling of ISTA.
	
	\subsection*{Notation}
	Throughout this paper, for $d \in \N$, we equip $\R^d$ with the inner product $\dotp{\bx}{\bz}=\bx^\top\bz$ and associated norm $\norm{\bx}=\sqrt{\dotp{\bx}{\bx}}$. The open ball with radius $r$ and center $\bx$ is denoted by $\Ball_r(\bx)$, where we use the shorthand notation $\Ball(\bx)$ if $r = 1$. If $U$ is a positive (semi)definite matrix, we denote the associated (semi)norm by $\norm{x}_U = \sqrt{\dotp{x}{Ux}}$.  For a matrix $A$ we denote the Moore-Penrose pseudo-inverse by $A^+$. If $A$ is quadratic and rank-degenerate, we write its pseudo-determinant (the product of all non-zero eigenvalues of $A$) as $\det(A)_*$. We further denote the largest and smallest eigenvalues of $A$ by $\lambda_{\max}(A)$ and $\lambda_{\min}(A)$, respectively.\\
	For a subset $\cC \subset \R^d$, let $\cB_\cC$ be the Borel $\sigma$-algebra on $\cC$. We define $\cP(\cC)$ as the space of Borel probability measures on $(\cC,\cB_\cC)$ and  $\delta_x$ as the Dirac measure at $x$. The pushforward measure of a measurable function $f$ with respect to a measure $\mu$ is denoted by $f_\#\mu$.  Random vectors are denoted with bold capital letters and their realizations in bold lowercase letters. We further denote the distribution of a random vector $\bX$ by $\mu_\bX$. The support of a Borel measure $\mu$ is denoted by $\supp(\mu)$ and defined as the set of all points for which all open neighborhoods have positive measure.\\
	For $n\in \N$ we set $[n] = \{1,\dots,n\}$. The convex hull of a set $\cC$ is denoted by $\conv{\cC}$. \\
	The Jacobian matrix of a differentiable map $H$ at a point $\bz$ is denoted by $\jac{H}(\bz)$.
	
	\section{The MMSE denoiser}\label{sec:mmsedenoiser}
	We make the assumption 
	\begin{assumption}\label{assum:Xcompact}
		$\cX$ is a compact, 
	\end{assumption}
	\noindent and for the remainder of this work fix $M>0$ with  
	\begin{equation*}
		\max\limits_{\bx \in \cX}\norm{\bx}\le M.
	\end{equation*}  
	We consider \eqref{eq:mmse}, where $\scrD(\R^n,\R^n)$ is the space of measurable functions from $(\cX,\cB_\cX)$ to $(\R^n,\cB_{\R^n})$. In this case, it is well known that the MMSE estimator coincides  $\mu_\bZ$-almost everywhere with the posterior conditional mean (PCM) in Bayesian estimation,
	\begin{equation}\label{eq:dmmse1}
\widehat{D}_{\mu_\bX}(\bz) \eqdef \bbE[\bX | \bZ = \bz ] = \frac{\int_{\cX} \bx d\mu_{\bZ|\bx}(\bz,\bx)d\mu_{\bX}(\bx)}{\int_{\cX} d\mu_{\bZ|\bx}(\bz,\bx)d\mu_{\bX}(\bx)} .
	\end{equation}
	\begin{remark}
If the moments of $\widehat{D}(\bZ)$ in \eqref{eq:mmse} are constrained (typically $\widehat{D}(\bZ)$ is in $L^1(\mu_\bZ)$), then with a density argument of \cite[Lemma~A.1]{Hanneke21}, one could optimize over the space of Lipschitz functions. This is important as in practice, $\widehat{D}$ is computationally intractable (as is the case of the MMSE denoiser \cite{Nguyen26}), and is parametrized with a neural network whose parameters are pre-trained from clean-noisy pairs with a Lipschitz constraint or other structural properties. Nevertheless, we will first stick with the ideal MMSE estimator before moving to NN training that will be the subject of the dedicated Section~\ref{sec:training}.
\end{remark}
	Note that the random variable $\bZ$ can be freely chosen to design an optimal MMSE denoiser.  In view of \eqref{eq:pnpden}, it is natural to consider 
	\begin{equation}\label{eq:noisy}
		\bZ = \bX + \bUpsilon,
	\end{equation}
	where $\bUpsilon$ is a zero-mean random vector independent of $\bX$. Using \eqref{eq_data_Model} to expand \eqref{eq:pnpiter} as in \eqref{eq:pnp_general}, we have \begin{equation}\label{eq:pnpiter2}
		\bx_{k+1} = \widehat{D}_{\mu_\bX}\pa{\bx + (I_n - \gamma B A)(\bx_k - \bx) + \gamma B\be} .
	\end{equation}
	Hence, the distribution of $\Upsilon$ should be equal, or at least close, to that of $\gamma B \bE\sim\cN(0,\gamma^2 B\Sigma_\bE B^\top)$. Since $B$ is not necessarily full column rank, this Gaussian distribution can be degenerate. With this discussion in mind, we will make the following assumption: 
	\begin{assumption}\label{assum:Xnoise}
		$\bUpsilon \sim \cN(0,\Sigma_{\bUpsilon})$, where $\Sigma_{\bUpsilon}$ is symmetric and positive semidefinite. 
	\end{assumption}
	
	\begin{remark}
		This assumption is in stark contrast with most approaches in the literature, where $\Sigma_{\bUpsilon}$ is taken as $\sigma_{\bUpsilon}^2 I_n$, and $\sigma_{\bUpsilon}>0$  is drawn randomly away from $0$ {\cite{venkatakrishnan_plug-and-play_2013,zhang2021plug,hauptmann2025convergent}}. As we will quantify in Section~\ref{sec:recguaranteesMMSE}, the recovery bound depends precisely on how the distribution of the noise $\bUpsilon$ deviates from $\gamma B\bE$. This can be large if $(\gamma B)_{\#}\mu_{\bE}$ and $\mu_{\bUpsilon}$ have very different supports as is the case if $B$ is not full column rank.
	\end{remark}

	Condition \eqref{assum:Xnoise} implies that $\mu_{\bUpsilon}$ is the Gaussian measure that has a density with respect to the Lebesgue measure on the $r$-dimensional subspace $V \eqdef \ran(\Sigma_{\bUpsilon}) \subset \R^n$, with $r=\rank(\Sigma_{\bUpsilon})$. This density reads
	\[
	\phi(\bupsilon;\Sigma_{\bUpsilon}) = \frac{1}{(2\pi)^{r/2}\det(\Sigma_{\bUpsilon})_*^{1/2}} \varphi(\bupsilon;\Sigma_{\bUpsilon}) \qwhereq
	\varphi(\bupsilon;\Sigma_{\bUpsilon}) \eqdef \exp\pa{-\frac{\bupsilon^\top\Sigma_{\bUpsilon}^+\bupsilon}{2}} \mathds{1}_{V}(\bupsilon),
	\]
	In particular this means that $\supp(\phi)\subset\ran(\Sigma_{\bUpsilon})$. \\
	We derive more convenient forms of $\phi$ and $\varphi$. Let $\Pi$ be a $n \times r$ matrix whose columns form an orthonormal basis for $V$. Then the matrix $\widetilde{\Sigma}_{\bUpsilon} = \Pi^\top \Sigma_{\bUpsilon} \Pi$ has full rank $r$ (implying symmetry and positive definiteness), and we get the following equivalent forms of $\phi$ and $\varphi$ 
	\begin{equation}\label{eq:phidef}
		\phi(\bupsilon;\widetilde{\Sigma}_{\bUpsilon}) = \frac{1}{(2\pi)^{r/2}\det(\widetilde{\Sigma}_{\bUpsilon})^{1/2}} \varphi(\bupsilon;\widetilde{\Sigma}_{\bUpsilon}) \qwithq \varphi(\bupsilon;\widetilde{\Sigma}_{\bUpsilon}) = \exp\pa{-\frac{\bupsilon^\top\Pi\widetilde{\Sigma}_{\bUpsilon}^{-1}\Pi^\top\bupsilon}{2}} \mathds{1}_{V}(\bupsilon) . 
	\end{equation}
	
	In this setting, \eqref{eq:dmmse1} reads
	\begin{equation}\label{eq:dmmsegauss}
		\widehat{D}_{\mu_\bX}(\bz) = \frac{\int_{\cX} \bx \varphi(\bz-\bx;\widetilde{\Sigma}_{\bUpsilon}) d\mu_{\bX}(\bx)}{\int_{\cX} \varphi(\bz-\bx;\widetilde{\Sigma}_{\bUpsilon})d\mu_{\bX}(\bx)} . 
	\end{equation}
\begin{remark}
Under suitable conditions, the MMSE denoiser can be linked to the maximum a posteriori (MAP) estimator. 
In particular, it has been shown in \cite{Gribonval2011, Gribnoval13} that when the noise $\bUpsilon$ is Gaussian with non-degenerate covariance, then $\widehat{D}_{\mu_\bX}$ can be written as the proximal mapping of some penalty. In \cite{Pesme25}, the authors show that under strong regularity assumptions on the prior distribution $\mu_\bX$, the iteration $\bx_{k+1} = (1-\alpha_k)\widehat{D}_{\mu_\bX}(\bx_k) + \alpha_k \bz$ approximates the MAP estimator.
\end{remark}

	For $\Sigma_{\bUpsilon} = \sigma^2 I_n$, the MMSE estimator satisfies Tweedie's Identity
	\begin{equation}\label{eq:Tweedie}
		\widehat{D}_{\mu_{\bX}} = I_n + \sigma^2 \nabla \log(p_{\bZ}).
	\end{equation}
	Its scalar version has been derived by Robbins in \cite{Robbins56} with general kernels $\varphi$ beyond the Gaussian case. He credits Tweedie \cite{Tweedie47} who was the first to derive it with a Laplacian-type kernel (see also \cite{Tweedie84} for the exponential family). {Tweedie's formula was also independently discovered by Miyasawa~\cite{miyasawa1961empirical}.} The vector version with $V=\bbR^n$ and the Gaussian kernel was obtained in \cite{Esposito68}. Tweedie's formula was also used in \cite{Guo05,Palomar06,Dytso20} to get derivative formulas in their special non-degenerate cases. As it stands in the intersection of core concepts, including MMSE estimation, score function,  and the optimal Gaussian denoiser, this formula has a strong connection to learning-based methods. In generative diffusion models, it links optimal denoising directions with the objectives of Hyv\"{a}rinen's score matching (SM)~\cite{hyvarinen2005estimation,ghosh2025stein} and denoising score matching (DSM)~\cite{vincent2011connection}. It can also be used to bridge Bayes estimators for naturally corrupted samples and generated images, thus addressing the ambient diffusion problem~\cite{daras2024consistent}. For solving inverse problems via the diffusion-based posterior sampling approach~\cite{chung2022diffusion,boys2023tweedie,nguyen2025training}, Tweedie's posterior mean is used for approximating the likelihood. The application of PnP for Gaussian noise MMSE estimation was explored in~\cite{xu2020provable}, and its link to Tweedie's formula was made explicit in~\cite{pritchard2025nonasymptotic,TanHNA26}. Recently,  Tweedie's formula was extended to infinite dimensional Banach spaces and applied for PDE-based inverse problems~\cite{yao2025guided}. 
	
The estimator \eqref{eq:dmmsegauss} is, in general, difficult to compute, especially in high dimensions. Moreover, in practice, one does not know the distribution $\mu_{\bX}$ but may only be able to sample from it. In other words, one has access to i.i.d. realizations $\cD_N^\bX \eqdef \enscond{\bx^i}{i \in [N]}$ of $\bX$, or, equivalently, to an empirical distribution $\mu^N_\bX = \frac{1}{N}\sum_{i=1}^N \delta_{\bx^i}$. In this case, \eqref{eq:dmmsegauss} becomes
	\begin{equation}\label{eq:Ndmmsegauss}
		\widehat{D}_{\mu_\bX^N}(\bz) = \frac{\int_{\cX} \bx \varphi(\bz-\bx;\widetilde{\Sigma}_{\bUpsilon}) d\mu^N_{\bX}(\bx)}{\int_{\cX} \varphi(\bz-\bx;\widetilde{\Sigma}_{\bUpsilon})d\mu^N_{\bX}(\bx)} = \frac{\sum_{i=1}^N \bx^i \varphi(\bz-\bx^i;\widetilde{\Sigma}_{\bUpsilon})}{\sum_{i=1}^N \varphi(\bz-\bx^i;\widetilde{\Sigma}_{\bUpsilon})} .
	\end{equation}
	Note that \eqref{eq:Ndmmsegauss} is exactly the Nadaraya-Watson estimator with kernel $\phi$. It is also closely related to  the mean-shift, the soft $k$-means, and kernel estimation. Further, it corresponds to the solution of the variational problem \eqref{eq:mmse} when $\mu_\bX$ is replaced by $\mu^N_\bX$. When $\norm{\widetilde{\Sigma}_{\bUpsilon}}$ is small, it can additionally be seen as an approximation to the nearest neighbour denoiser. This assertion will be made rigorous in Section~\ref{subsec:mmselimit}.
	
	\begin{remark}\label{rem:mmseconvX}
		A key observation is that for $\bz \in \supp(\mu_\bX) + V$, $\widehat{D}_{\mu_\bX}(\bz)$ is the barycenter of a probability measure $\mu_{\bX|\bz}$ whose support is contained in $\supp(\mu_\bX)$. It then follows from a standard separation argument that $\widehat{D}_{\mu_\bX}(\bz) \in \conv{\supp(\mu_\bX))}$. Similarly, $\widehat{D}_{\mu_\bX^N}(\bz) \in \conv{\cD_N^\bX} \subset \conv{\supp(\mu_\bX))}$. This implies that the MMSE estimator has a memorization effect.
	\end{remark}

	\section{Main results}
	For notational convenience, we denote the PnP iteration \eqref{eq:pnpiter2} via 
	\begin{equation}\label{eq:pnpiterDNmmse}
		\bx_{k+1} = T_{\mu_{\bX}^N,\gamma}(\bx_k;\by) ,
	\end{equation}
	where
	\[
	T_{\mu_{\bX}^N,\gamma}(\cdot;\by):  \bbR^n \ni \bz  \mapsto \widehat{D}_{\mu_\bX^N} \circ \pa{(I_n - \gamma BA)\bz + \gamma B\by}
	\] 
	for $\gamma > 0$ and $\by \in \bbR^m$. Note that $\widehat{D}_{\mu_\bX^N}$ depends on $\mu_\bX^N$, the empirical measure obtained from the $N$ i.i.d. samples $\cD_N^\bX$ from $\mu_{\bX}$, and thus is random. In turn, so are the iterates even for fixed noise. We will, however, avoid using capital letters in \eqref{eq:pnpiterDNmmse} at this stage.
	
	\subsection{Main assumptions}
	For the remainder of this work, we make the following assumptions.
	\begin{assumption}\label{assum:XV}
		$\bX$ is fully supported within $V$, more precisely it holds $\conv{\supp(\mu_{\bX})} \subset V$, where we recall that $V = \ran(\Sigma_{\bUpsilon})$.
	\end{assumption}

	\begin{assumption}\label{assum:ranB}
		The operator $B$ maps into $V$, i.e. $\ran(B) \subset V$, 
	\end{assumption}
	
	\begin{assumption}\label{assum:BA}
		The matrix $BA$ is symmetric and positive semidefinite. 
	\end{assumption}
	
	\begin{assumption}\label{assum:restinj}
		For $\bx \in \cX$ satisfying \eqref{eq_data_Model}, it holds that
		\[
		\ker(BA) \cap T_{\conv{\supp(\mu_\bX)}}(\bx) = \{0\} ,
		\]
		where $T_{\cC}(\bx) = \cl{\bbR_+\pa{\cC-\bx}}$ is the tangent cone of the convex set $\cC$ at $\bx \in \cC$.
	\end{assumption}

	A discussion of these assumptions is in order. Assumption~\eqref{assum:XV} and Remark~\ref{rem:mmseconvX} imply that
	\begin{equation}\label{eq:Dmmsemu}
		\widehat{D}_{\mu}: \bz \in V \mapsto  V \ni \widehat{D}_{\mu}(\bz) = \frac{\int_{\cX} \bx \varphi(\bz - \bx;\widetilde{\Sigma}_{\bUpsilon}) d\mu(\bx)}{p_\mu(\bz)} \qwithq \varphi(\bupsilon;\widetilde{\Sigma}_{\bUpsilon}) = \exp\pa{-\frac{\bupsilon^\top\Pi\widetilde{\Sigma}_{\bUpsilon}^{-1}\Pi^\top\bupsilon}{2}} ,
	\end{equation}
	where we have used the shorthand definition
	\[
	p_\mu(\bz) \eqdef \int_{\cX} \varphi(\bz - \bx;\widetilde{\Sigma}_{\bUpsilon})d\mu(\bx) .
	\]
	In the context of the MMSE estimator \eqref{eq:dmmsegauss}, assuming \eqref{assum:XV} on $\mu_\bX$ essentially amounts to imposing that the random variable $\bZ = \bX + \bUpsilon$ is contained in the support of the noise distribution, which is a reasonable assumption. In other words, the latent noise $\bUpsilon$ should be chosen such that it can realize any possible ground truth. In particular, this assumption is crucial for establishing Lipschitz continuity of the MMSE denoiser with respect to the prior measure.\\
	Assumption \eqref{assum:ranB} ensures consistency between the (generalized) 'descent' step $\bu_{k} = \bx_k - \gamma B (A\bx_k -\by)$ and the denoising step $\bx_{k+1} = \widehat{D}_{\mu_\bX^N}(\bu_k)$ by forcing the 'descent direction' to be in the range of the signal noise $\bUpsilon$. In view of assumption \eqref{assum:XV}, this ensures that for $\bx_0 \in V$ the argument of the denoiser is well-defined throughout the iteration \eqref{eq:pnpiterDNmmse}. In particular, this assumption is satisfied if $B$ is chosen such that $\ran(B) = V$, which in turn means $\ran(B) = \ran(\Sigma_{\bUpsilon})$.\\
	Assumption~\eqref{assum:BA} entails that the forward operator part in $T_{\mu_{\bX}^N,\gamma}(\cdot;\by)$ is a gradient descent corresponding to the convex function $\bz \in \bbR^n \mapsto \frac{1}{2}\dotp{BA\bz}{\bz} - \dotp{B\by}{\bz}$, and assumption~\eqref{assum:restinj} ensures invertibility of $BA$ on $T_{\conv{\supp(\mu_\bX)}}(\bx)$. Moreover assumptions \eqref{assum:BA}-\eqref{assum:restinj} are used to ensure contractiveness of the operator $T_{\mu_{\bX}^N,\gamma}(\cdot;\by)$.  
	
	
	Consider the case $B=A^\top C$ or $B=A^+$, where $C$ is a symmetric positive definite matrix, e.g., Tikhonov regularization or $C=\Sigma_{\bE}^{-1/2}$ as in \cite{Leterme25}. If the inclusion in \eqref{assum:ranB} becomes an equality, then \eqref{assum:XV} implies that \eqref{assum:restinj} is superfluous. Indeed, in this case $V=\ran(B)=\ran(A^\top)=\ker(BA)^\perp$, and thus $\pa{\bx_k - \bx}_{k \in \bbN} \subset \ker(BA)^\perp$ thanks to \eqref{assum:XV}. We recover the setting considered in \cite{Leterme25}. Additionally, if the inclusion in \eqref{assum:ranB} is strict, then \eqref{assum:restinj} is needed.

	\subsection{Recovery guarantees for PnP with the MMSE}\label{sec:recguaranteesMMSE}
	In this section, we present recovery bounds for the iteration \eqref{eq:pnpiter2}, for which we provide self-contained proofs in the dedicated Section~\ref{sec:proofs}. The starting point of our analysis is the following existence result.
	\begin{lemma}\label{lem:fixpnpfbs}
		Assume that \eqref{assum:Xcompact}--\eqref{assum:Xnoise} and \eqref{assum:ranB} hold, and that  $\mu_{\bX}$ fulfills \eqref{assum:XV}. Then the set of fixed points of $T_{\mu_{\bX}^N,\gamma}$ is non-empty and compact.
	\end{lemma}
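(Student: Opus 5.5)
Our plan is to apply Brouwer's fixed point theorem to $T_{\mu_{\bX}^N,\gamma}(\cdot;\by)$ restricted to a suitable compact convex set, and then to obtain compactness of the fixed-point set from continuity and boundedness.

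\textbf{Step 1: $T$ maps $V$ into $\conv{\cD_N^\bX}$.} Take $\bz \in V$. By \eqref{assum:ranB}, $B\by \in V$ and $BA\bz \in V$, so the argument $\bu = (I_n-\gamma BA)\bz + \gamma B\by$ lies in $V$. For $\bu\in V$, the Nadaraya--Watson form \eqref{eq:Ndmmsegauss} is well defined:
\begin{itemize}
\item since $\cD_N^\bX \subset \supp(\mu_\bX)\subset V$ by \eqref{assum:XV}, each $\bu-\bx^i$ lies in $V$;
\item hence $\mathds{1}_V(\bu-\bx^i)=1$, so every weight $\varphi(\bu-\bx^i;\widetilde{\Sigma}_{\bUpsilon})$ is strictly positive and the denominator does not vanish.
\end{itemize}
Consequently $\widehat{D}_{\mu_\bX^N}(\bu)$ is a convex combination of the samples, i.e.\ it belongs to $K \eqdef \conv{\cD_N^\bX}$, as noted in Remark~\ref{rem:mmseconvX}. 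The set $K$ is nonempty, convex and compact, being the convex hull of finitely many points, and $K\subset V\cap \Ball_{M+1}(0)$ by \eqref{assum:Xcompact}. In particular $T$ maps $K$ into itself.

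\textbf{Step 2: continuity and existence.} On $V$, each weight $\bu\mapsto \exp(-\bu'^\top\Pi\widetilde{\Sigma}_{\bUpsilon}^{-1}\Pi^\top\bu'/2)$ with $\bu' = \bu - \bx^i$ is smooth, and the denominator is strictly positive. Hence $\widehat{D}_{\mu_\bX^N}$ is continuous on $V$. Composing with the affine map gives continuity of $T$ on $V$, and in particular on $K$. Brouwer's theorem, applied in the finite-dimensional affine hull of $K$ or directly to the compact convex set $K$, yields a fixed point in $K$.

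\textbf{Step 3: compactness of the fixed-point set.} Every fixed point $\bz$ satisfies $\bz = T(\bz)$. If $\bz\in V$, Step 1 gives $\bz\in K$. The set of fixed points within $V$ is therefore $\{\bz\in K : T(\bz)=\bz\}$, the preimage of $\{0\}$ under the continuous map $\bz\mapsto T(\bz)-\bz$ on $K$. It is thus a closed subset of the compact set $K$, hence compact.

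\textbf{Main obstacle.} The delicate point is points outside $V$: there the indicator in $\varphi$ makes the denominator zero. We would handle this by taking $T$ to be defined on $V$ only, via \eqref{eq:Dmmsemu}, which is the domain fixed in the paper after \eqref{assum:XV}. Should one instead extend $\widehat{D}$ measurably off $V$, the fixed-point set would need to be restricted to $V$ explicitly. With the domain $V$, every fixed point lies in $K$ and the argument closes.
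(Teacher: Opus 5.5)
Your proof is correct and follows the paper's route: you apply Brouwer's theorem to a compact convex set that $T_{\mu_{\bX}^N,\gamma}(\cdot;\by)$ maps into itself, using \eqref{assum:ranB} and \eqref{assum:XV} to keep the iteration in $V$ and Remark~\ref{rem:mmseconvX} for invariance. The differences are minor: you use the polytope $\conv{\cD_N^\bX}$ rather than $\conv{\supp(\mu_{\bX})}$, which avoids needing compactness of the convex hull of a compact set; you get continuity directly rather than from Lemma~\ref{lem:Dmulip}; and you spell out the closedness argument for compactness of the fixed-point set, which the paper leaves implicit.
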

	
	\begin{proof}
		Under our assumptions, $T_{\mu_{\bX}^N,\gamma}(\cdot;\by)$ is Lipschitz continuous thanks to Corollary~\ref{lem:Dmulip}. Moreover, from \eqref{assum:XV}, \eqref{assum:ranB} and Remark~\ref{rem:mmseconvX}, it maps $\conv{\supp(\mu_{\bX})}$ onto $\conv{\supp(\mu_{\bX})}$. Since $\supp(\mu_\bX)$ is compact (it is closed by definition and hence compact by \eqref{assum:Xcompact}), it follows from \cite[Theorem~3.20]{Rudin91} that $\conv{\supp(\mu_{\bX})}$ is a compact convex set. The claim is then ensured by Brouwer's fixed point theorem. 
	\end{proof}
	
	\subsubsection{Pointwise recovery error bound }
	
	As can be seen in Lemma~\ref{lem:Dmulip} below, the empirical MMSE denoiser is Lipschitz continuous. We denote its Lipschitz constant by $\lip{\widehat{D}_{\mu_{\bX}^N}}$ and derive the following error bound for the iteration \eqref{eq:pnpiterDNmmse}:
	\begin{align}
		\norm{\bx_{k+1} - \bx}
		&= \norm{\widehat{D}_{\mu_\bX^N}\pa{\bx_k + \gamma B(\by - A \bx_k)} - \bx} \nonumber\\
		&= \norm{\pa{\widehat{D}_{\mu_\bX^N}\pa{\bx + \pa{I_n - \gamma B A}(\bx_k - \bx) + \gamma B\be} - \widehat{D}_{\mu_\bX^N}\pa{\bx}} + \pa{\widehat{D}_{\mu_\bX^N}\pa{\bx} - \bx}} \nonumber\\
		&\leq \lip{\widehat{D}_{\mu_{\bX}^N}} \norm{(I_n - \gamma B A)(\bx_k - \bx) + \gamma B\be} + \norm{\widehat{D}_{\mu_\bX^N}\pa{\bx} - \bx} \nonumber\\
		&\leq \lip{\widehat{D}_{\mu_{\bX}^N}} \norm{(I_n - \gamma B A)(\bx_k - \bx)} + \gamma\lip{\widehat{D}_{\mu_{\bX}^N}} \norm{B}\norm{\be} + \norm{\widehat{D}_{\mu_\bX^N}\pa{\bx} - \bx} . \label{eq:bndpnpfbs_1}
	\end{align}
	Note that \eqref{eq:bndpnpfbs_1} consists of three terms: the first one pertains to the iteration error, the second one originates from the noise in the observations, and the third one captures the performance of the denoiser. Theorem~\ref{thm:pointwiserecovbndpnpfbs} below bounds these terms individually to obtain a recovery bound for the PnP iteration. 
	
Before stating it, we recall that the minimal conical singular value of a matrix $M$ with respect to a closed cone $\cK$ is defined as
\[
\lambda_{\min}(M;\cK) = \inf_{\bz \in \cK \setminus \{0\}} \norm{M \bz}/\norm{\bz} = \min_{\bz \in \cK \cap \sph^{n-1}} \norm{M \bz} .
\]
The last equality follows from positive homogeneity, and is indeed a minimum as the norm is continuous and the constraint set is compact. The condition number of $BA$ on the cone $T_{\conv{\supp(\mu_\bX)}}(\bx)$ is 
\begin{equation}\label{eq:condnum}
		\kappa(BA) \eqdef \frac{\lambda_{\max}(BA)}{\lambda_{\min}\pa{BA;T_{\conv{\supp(\mu_\bX)}}(\bx)}} \geq 1 ,
	\end{equation}
	Note that $\kappa(BA)$ is well-defined due to assumption \eqref{assum:restinj}.

	\begin{theorem}\label{thm:pointwiserecovbndpnpfbs}
		Assume that \eqref{assum:Xcompact}--\eqref{assum:Xnoise} and \eqref{assum:ranB}--\eqref{assum:restinj} hold and that $\mu_{\bX}$ fulfills \eqref{assum:XV}. Suppose also that either 
		\begin{equation}\label{eq:cond_nonexpansive}
			\frac{M^2}{\lambda_{\min}(\widetilde{\Sigma}_{\bUpsilon})} \leq 1,
		\end{equation} or otherwise, that 
		\begin{equation}\label{eq:cond_welldefined}
			\kappa(BA) < \pa{1 - \frac{\lambda_{\min}(\widetilde{\Sigma}_{\bUpsilon})^2}{M^4}}^{-1/2} 
		\end{equation}
		holds. Then, there exists $\epsilon > 0$ such that the PnP-FBS iterates with \begin{equation}\label{eq:optgamma}
			\gamma = \pa{\kappa(BA)\lambda_{\max}(BA)}^{-1}
		\end{equation} obey
		\begin{multline}\label{eq:pointwise_recovery_1}
			\norm{\bx_k - \bx}
			\leq q^{k+1}\norm{\bx_0 - \bx} + \frac{1-q^{k}}{1-q}\Bigg(\frac{M^2}{\lambda_{\min}(\widetilde{\Sigma}_{\bUpsilon})}\gamma\norm{B}\norm{\be} +  2M\pa{N-1} e^{-\frac{\epsilon}{2\lambda_{\max}(\widetilde{\Sigma}_{\bUpsilon})}} \\ + \sqrt{\frac{\lambda_{\max}(\widetilde{\Sigma}_{\bUpsilon})}{\lambda_{\min}(\widetilde{\Sigma}_{\bUpsilon})}}\dist(\bx,\cD_N^\bX)\Bigg) ,
		\end{multline}
		where $q = \frac{M^2}{\lambda_{\min}(\widetilde{\Sigma}_{\bUpsilon})} \sqrt{1 - \kappa\pa{BA}^{-2}}\in [0,1[$.\\
		Moreover, if $\mu_\bX$ has a density with respect to the volume measure on $V$ which is bounded away from zero, then for $N$ large enough, it holds with probability at least $1 - K_2 N^{-2}$ over the sampling of $\cD_N^\bX$ that 
		\begin{multline}\label{eq:pointwise_recovery_2}
			\norm{\bx_k - \bx}
			\lesssim q^{k+1}\norm{\bx_0 - \bx} + \frac{1-q^{k}}{1-q}\Bigg(\frac{M^2}{\lambda_{\min}(\widetilde{\Sigma}_{\bUpsilon})}\gamma\norm{B}\norm{\be} +  2M\pa{N-1} e^{-\frac{\epsilon}{2\lambda_{\max}(\widetilde{\Sigma}_{\bUpsilon})}} \\ + K_1 3^{1/r} \sqrt{\frac{\lambda_{\max}(\widetilde{\Sigma}_{\bUpsilon})}{\lambda_{\min}(\widetilde{\Sigma}_{\bUpsilon})}}\pa{\frac{\log(N)}{N}}^{1/r} \Bigg) ,
		\end{multline}
		where $r = \dim{V}$, and $K_1, K_2 > 0$ are constants that depend only on $r$ and $M$.
	\end{theorem}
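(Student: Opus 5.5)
Starting from the three-term estimate \eqref{eq:bndpnpfbs_1}, I bound each term separately, obtain a linear recursion $a_{k+1}\le q a_k + c$ for $a_k=\norm{\bx_k-\bx}$, and unroll it. This gives $a_k\le q^k a_0 + \frac{1-q^k}{1-q}c$, which has the stated form, the extra power of $q$ being a harmless reindexing or looseness.

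\textbf{Lipschitz constant.} For the empirical MMSE denoiser I take the bound from Lemma~\ref{lem:Dmulip}/Corollary~\ref{lem:Dmulip}. The Jacobian of \eqref{eq:Ndmmsegauss} is the posterior covariance composed with $\Pi\widetilde{\Sigma}_{\bUpsilon}^{-1}\Pi^\top$. Since all atoms lie in the ball of radius $M$, the covariance has norm at most $M^2$, so $\lip{\widehat{D}_{\mu_\bX^N}}\le M^2/\lambda_{\min}(\widetilde{\Sigma}_{\bUpsilon})$. This immediately gives the noise term $\frac{M^2}{\lambda_{\min}(\widetilde{\Sigma}_{\bUpsilon})}\gamma\norm{B}\norm{\be}$.

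\textbf{Iteration term.} I need $\norm{(I_n-\gamma BA)(\bx_k-\bx)}\le\sqrt{1-\kappa(BA)^{-2}}\,\norm{\bx_k-\bx}$.
\begin{itemize}
\item By Remark~\ref{rem:mmseconvX} and \eqref{assum:XV}, \eqref{assum:ranB}, for $k\ge1$ the iterates stay in $\conv{\supp(\mu_\bX)}$. Hence $\bd=\bx_k-\bx$ lies in the tangent cone $T_{\conv{\supp(\mu_\bX)}}(\bx)$; for $k=0$ I assume $\bx_0$ is chosen there too, or absorb it into the power of $q$.
\item Write $\bd$ as a combination of eigenvectors of the symmetric positive semidefinite matrix $BA$ (\eqref{assum:BA}). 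Then
\[
\norm{(I-\gamma BA)\bd}^2=\norm{\bd}^2-2\gamma\dotp{BA\bd}{\bd}+\gamma^2\norm{BA\bd}^2 .
\]
\item Use $\dotp{BA\bd}{\bd}\ge \norm{BA\bd}^2/\lambda_{\max}$ together with $\norm{BA\bd}\ge \lambda_{\min}(BA;T)\norm{\bd}$, where \eqref{assum:restinj} makes $\lambda_{\min}(BA;T)>0$. This yields $1-(2\gamma/\lambda_{\max}-\gamma^2)\lambda_{\min}(BA;T)^2$ as the squared factor.
\item Choose $\gamma$ to optimise this factor. I expect the optimum to land on $1-\kappa^{-2}$ with the prescribed $\gamma=(\kappa\lambda_{\max})^{-1}$, though the exact constants may need adjusting.
\end{itemize}
Then $q=\frac{M^2}{\lambda_{\min}}\sqrt{1-\kappa^{-2}}$, and $q<1$ holds under either \eqref{eq:cond_nonexpansive} or \eqref{eq:cond_welldefined}; the latter is equivalent to this inequality after rearranging.

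\textbf{Denoiser bias.} This is the main obstacle: bounding $\norm{\widehat{D}_{\mu_\bX^N}(\bx)-\bx}$. Let $\bx^{j}$ be the nearest sample to $\bx$ in the $\widetilde{\Sigma}_{\bUpsilon}^{-1}$-weighted norm. Then
\[
\widehat{D}(\bx)-\bx=(\bx^{j}-\bx)+\sum_i w_i(\bx^i-\bx^{j}).
\]
\begin{itemize}
\item The first term is bounded by $\sqrt{\lambda_{\max}/\lambda_{\min}}\dist(\bx,\cD_N^\bX)$, by norm equivalence.
\item For the second term, the weights are $w_i\le \exp(-(\norm{\bx-\bx^i}^2_{\widetilde\Sigma^{-1}}-\norm{\bx-\bx^j}^2_{\widetilde\Sigma^{-1}})/2)$. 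I take $\epsilon$ to be the minimal gap between the smallest and second-smallest squared distances; this is positive generically, i.e.\ with a unique nearest neighbour. Lower-bounding the weighted norm via $\lambda_{\max}$ makes each weight at most $e^{-\epsilon/(2\lambda_{\max})}$, and with $\norm{\bx^i-\bx^j}\le 2M$ the sum over $N-1$ indices gives the middle term.
\end{itemize}
The delicate part is making $\epsilon$ independent of $k$, which works because only the fixed point $\bx$ appears here.

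\textbf{High-probability version.} If $\mu_\bX$ has density at least $c>0$ on the $r$-dimensional set, then
\[
\mathbb P\big(\dist(\bx,\cD_N^\bX)>t\big)\le(1-c' t^r)^N\le e^{-c'Nt^r}.
\]
Choosing $t=(3\log N/(c'N))^{1/r}$ makes this $N^{-3}$. A covering or union argument then gives probability $1-K_2N^{-2}$, which is where the factor $3^{1/r}$ and the constant $K_1$ come from. Substituting this bound into the first estimate yields \eqref{eq:pointwise_recovery_2}.
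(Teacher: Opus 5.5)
Your architecture matches the paper's: the three-term estimate \eqref{eq:bndpnpfbs_1}, the bound $M^2/\lambda_{\min}(\widetilde{\Sigma}_{\bUpsilon})$ from Lemma~\ref{lem:Dmulip}, contraction of $I_n-\gamma BA$ on $T=T_{\conv{\supp(\mu_\bX)}}(\bx)$, a nearest-neighbour bias bound, and the covering estimate of Lemma~\ref{lem:bnddistxD_N} with $\tau=2$. For the fixed ground truth $\bx$, your one-point tail bound already suffices there. The gap is the step you leave open in the iteration term, and it does not close along your route. Your inequality $\dotp{BA\mathbf{d}}{\mathbf{d}}\ge\norm{BA\mathbf{d}}^2/\lambda_{\max}(BA)$ is correct. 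But the resulting squared factor $1-(2\gamma/\lambda_{\max}-\gamma^2)\lambda_{\min}(BA;T)^2$ is minimised at $\gamma=1/\lambda_{\max}(BA)$, not at the prescribed step. At the prescribed $\gamma=(\kappa\lambda_{\max})^{-1}$ it equals $1-(2\kappa-1)\kappa^{-4}=1-\kappa^{-2}+(\kappa-1)^2\kappa^{-4}$, strictly worse than claimed when $\kappa>1$. So you get the stated $q$ only with $\gamma=1/\lambda_{\max}(BA)$, and then the noise term picks up an extra factor $\kappa(BA)$.

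The paper reaches its constants by using instead $\dotp{BA\mathbf{d}}{\mathbf{d}}\ge\lambda_{\min}(BA;T)\norm{\mathbf{d}}^2$. This gives $\rho(\gamma)^2=1-2\gamma\lambda_{\min}(BA;T)+\gamma^2\lambda_{\max}(BA)^2$, whose minimiser is exactly $(\kappa\lambda_{\max})^{-1}$ with value $1-\kappa^{-2}$. That Rayleigh-quotient bound, however, does not follow from defining $\lambda_{\min}(BA;T)$ as a conical singular value. Take $BA=\mathrm{diag}(1,0)$ and $T=\bbR_+(1,1)$, e.g.\ $\conv{\supp(\mu_\bX)}$ a segment with endpoint $\bx$. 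Then $\dotp{BA\mathbf{d}}{\mathbf{d}}/\norm{\mathbf{d}}^2=1/2<1/\sqrt{2}=\lambda_{\min}(BA;T)$. With $\gamma=1/\sqrt{2}$ the squared contraction factor is $5/4-\sqrt{2}/2\approx 0.54>1/2=1-\kappa^{-2}$, which is exactly your value. So your hesitation is justified. The stated formulas for $\gamma$ and $q$ go through if, in $\kappa(BA)$, $\lambda_{\min}(BA;T)$ is replaced by $\min\{\dotp{BA\mathbf{z}}{\mathbf{z}} : \mathbf{z}\in T,\ \norm{\mathbf{z}}=1\}$, which is positive under \eqref{assum:restinj}. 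Otherwise your computation with $\gamma=1/\lambda_{\max}(BA)$ is the correct version.

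Two smaller points. First, anchoring the bias bound at a single nearest neighbour $\bx^j$ fails when the weighted nearest neighbour is not unique; the minimal gap is then zero. This can happen with positive probability, e.g.\ if $\mu_\bX$ has two atoms equidistant from $\bx$, and a deterministic bound cannot rely on genericity. The paper avoids this by bounding $\sum_i w_i\norm{\bx^i-\bx}$ and averaging over the whole minimising set $\cI_{\bx}$. Every element of $\cI_{\bx}$ lies within $\sqrt{\lambda_{\max}(\widetilde{\Sigma}_{\bUpsilon})/\lambda_{\min}(\widetilde{\Sigma}_{\bUpsilon})}\,\dist(\bx,\cD_N^\bX)$ of $\bx$, so $\epsilon$ is only the gap to the non-minimisers and ties cost nothing. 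Second, $q^{k+1}$ is not a looseness you can absorb. Since $q^{k+1}\le q^k$, unrolling $a_{k+1}\le qa_k+c$ does not give it, and at $k=0$ it would force $\bx_0=\bx$. The paper's own recursion also yields only $q^k$, with $\bx_0\in\conv{\supp(\mu_\bX)}$ tacitly assumed as you noticed, so that exponent is a typo in the statement.
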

	
	Estimate \eqref{eq:pointwise_recovery_1} implies \begin{equation*}
		(1-q)\limsup_{k \to +\infty} \norm{\bx_k - \bx} \leq \frac{M^2}{\lambda_{\min}(\widetilde{\Sigma}_{\bUpsilon})}\gamma\norm{B}\norm{\be} +  2M\pa{N-1} e^{-\frac{\epsilon}{2\lambda_{\max}(\widetilde{\Sigma}_{\bUpsilon})}} + K_1 3^{1/r}\sqrt{\frac{\lambda_{\max}(\widetilde{\Sigma}_{\bUpsilon})}{\lambda_{\min}(\widetilde{\Sigma}_{\bUpsilon})}}\dist(\bx,\cD_N^\bX) .
	\end{equation*}
	The right-hand side consists of three terms. The first one can be interpreted as a noise amplification and depends linearly on the norm of the noise, as it is often the case in regularization theory \cite{engl1996regularization, scherzer2009variational, RegularizationInBanach}. The second and third term capture both the MMSE denoiser performance and sampling error. The last term reflects the fact that as the noise decreases, the PnP-FBS behaves similarly to a nearest neighbor denoiser, hence provably showing the memorization effect that limits PnP.

	\begin{remark} \label{rem:assumptions_convergence}
		\begin{enumerate}[label = (\roman*)]
			\item  As opposed to the classical results for forward-backward splitting and many analyses of its PnP version (see e.g. \cite{Leterme25, EbnerHaltmeier24, hurault_gradient_2021, hurault_nonconvex_2022}) we do not assume that the denoiser is contractive or firmly non-expansive.  If $\widehat{D}_{\mu_\bX^N}$ is not contractive, Theorem~\ref{thm:pointwiserecovbndpnpfbs} still gives a recovery bound, provided that \eqref{eq:cond_nonexpansive} holds. This condition imposes that $BA$ cannot be too ill-conditioned. Note, however, that the left-hand side in \eqref{eq:cond_nonexpansive} refers to the condition number of $BA$ on the cone $T_{\conv{\supp(\mu_\bX)}}(\bx)$, which may differ significantly from the classical condition number. In other words, $BA$ only needs to be well-conditioned (up to some shifting) on the set of reasonable solutions. Moreover, interpreting the Lipschitz bound of the denoiser as an upper bound on the SNR, condition \eqref{eq:cond_nonexpansive} relates this to the conditioning of the operator $BA$: the higher the SNR, the better the conditioning should be. Equivalently, for fixed conditioning, the denoiser is allowed to be only mildly expansive. As discussed after Lemma~\ref{lem:Dmulip} below, the Lipschitz constant bound degrades as the noise decreases, which, in turn, means less stability of the denoiser. Thus, the noise should not be amplified too much by $B$ to maintain overall stability of the PnP-FBS iteration.  
			\item In \cite{liu2021recovery} the authors also derive recovery bounds for the PnP iteration with $B = A^\top$ and for potentially non-contractive denoisers. In particular, they derive a stepsize ensuring convergence of the iterates under the condition \begin{equation*}
				\kappa(A^\top A) \le 1+\frac{2}{\lip{D}},
			\end{equation*} 
			where $\lip{D}$ is the Lipschitz constant of the denoiser. Compared to \eqref{eq:cond_welldefined}, their estimate allows for a slightly worse conditioning of the forward problem if the denoiser is fixed. However, in our setting, this limitation may be alleviated by choosing the matrix $B$ appropriately, thus yielding a preconditioning of the problem. Further, the results in \cite{liu2021recovery} assume a uniform bound of the residual operator $I-D$. This is true in the setting of this work due to \eqref{assum:Xcompact}, but in general, it may be hard to verify. Similarly, in \cite{Leterme25} the authors consider the choice $B = A^\top \pa{\widetilde{\Sigma}_{\bE}}^\frac{1}{2}$ and derive a stepsize based on the conditioning of the operator $BA$. However, the analysis of the PnP iteration is carried out for nonexpansive denoisers.
		\item The quantity $\epsilon$ in Theorem~\ref{thm:pointwiserecovbndpnpfbs} depends in particular on the sample $\cD_N^\bX$, hence on $N$. This dependence can be removed; see Remark~\ref{rem:epsN}.
		\end{enumerate}
	\end{remark}
	
	\subsubsection{Wasserstein recovery error bound}\label{subs:Wasserstein}
	Formally, we would expect $\widehat{D}_{\mu_{\bX}^N}$ to converge pointwise almost surely to $\widehat{D}_{\mu_{\bX}}$ as $N\to \infty$. Furthermore, for sufficiently well-behaved distributions $\mu_{\bX}$, the MMSE denoiser becomes an approximation of identity, as can be seen by \eqref{eq:dmmse1}. Thus, one has convergence to the identity almost everywhere. If this behavior was quantifiable, it could simplify the estimate \eqref{eq:bndpnpfbs_1} and the subsequent analysis of the PnP iteration. However, this kind of convergence is not topologizable, and we therefore cannot obtain quantitative convergence results. This can also be seen in the bound \eqref{eq:pointwise_recovery_2}, which blows up as $N\to\infty$. Thus, in order to improve the result, we change the perspective and focus on distributional error bounds rather than pointwise ones. This means instead of pointwise bounds for the iterates of the PnP given a realization $\by$ of $\bY$, we now turn to establishing bounds on the expected Wasserstein distance between the distribution $\widehat{\mu}_{k}^N$ of the PnP iterates and the empirical MMSE denoiser and the distribution $\mu_{\bX}$ of $\bX$.\\
	Recall that for $p \in [1,\infty[$, the $p$-Wasserstein distance is a metric on $\cP(\cX)$, defined for $\mu,\nu \in \cP(\cX)$ as
	\[
	\cW_p(\mu,\nu) \eqdef \inf\{\bbE[\norm{\bU-\bV}^p]^{1/p}: \bU \sim \mu, \bV \sim \nu\} ,
	\]
	where the infimum is over all joint distributions of $(\bU,\bV)$ (i.e., couplings of $\mu$ and $\nu$). It is known that $\cW_p$ is a metric on probability measures $\cP(\cX)$ with finite $p$-th moment and that it metrizes weak convergence \cite{VillaniON09}. It follows immediately from H\"older's inequality that
	\begin{equation}\label{eq:Wporder}
		\cW_p \leq \cW_q \qforq p \leq q .
	\end{equation}
	As for the pointwise recovery bounds, the starting points of our analysis will be an error bound similar to \eqref{eq:bndpnpfbs_1}. However, since we aim to state a recovery result with respect to Wasserstein distances, we consider iteration \eqref{eq:pnpiter2} with random vectors. Thus, for a fixed realization of the sampling $\cD_N^\bX$, let $\bX \sim \mu_{\bX}, \bUpsilon\sim\cN(0,\Sigma_{\bUpsilon})$, $\bE \sim \mu_{\bE}$ and $\bX^N \sim \mu_\bX^N$. Further denote by $\bX_k$ the iterates of \eqref{eq:pnpiterDNmmse} with $\bY = A\bX + \bE$ instead of $\by$. It holds 
	\begin{align}
		\norm{\bX_{k+1} - \bX}
		&= \norm{\widehat{D}_{\mu_{\bx}^N}\pa{\bX_k+\gamma B(\bY-A\bX_k)}-\bX}\nonumber\\
		&\leq  \norm{\widehat{D}_{\mu_{\bx}^N}\pa{\bX+(I_n - \gamma B A)(\bX_k - \bX) + \gamma B\bE} - \widehat{D}_{\mu_{\bx}^N}\pa{\bX+\bUpsilon}} \nonumber \\
		&+ \norm{\widehat{D}_{\mu_{\bx}^N}(\bX+\bUpsilon) - \widehat{D}_{\mu_{\bx}^N}(\bX^N+\bUpsilon)} + \norm{\widehat{D}_{\mu_\bX^N}\pa{\bX^N + \bUpsilon} - \bX^N} + \norm{\bX^N - \bX} \nonumber\\
		&\leq \lip{\widehat{D}_{\mu_{\bX}^N}}\norm{(I_n - \gamma B A)(\bX_k - \bX)} + \lip{\widehat{D}_{\mu_{\bX}^N}}\norm{\gamma B\bE - \bUpsilon} + \norm{\widehat{D}_{\mu_\bX^N}\pa{\bX^N + \bUpsilon} - \bX^N} \nonumber\\
		&\quad + \pa{1+\lip{\widehat{D}_{\mu_{\bX}^N}}}\norm{\bX^N - \bX} . \label{eq:bndpnpfbs_2}
	\end{align}
	This error bound entails four terms. As in \eqref{eq:bndpnpfbs_1}, the first one pertains the iteration error. The second term describes the difference between $\gamma B\bE$ and $\bUpsilon$, see also the discussion in Section~\ref{sec:mmsedenoiser}. If one chooses $B = A^\top$, i.e. the argument in \eqref{eq:pnpiter2} is of gradient descent type, it establishes a direct link between the forward operator $A$ and $\bUpsilon$ (which determines the denoiser). This observation is crucial since it shows that the PnP framework is not agnostic to the ill-posed problem. Rather, the denoiser needs to be chosen such that it is compatible with the forward operator. The third term in \eqref{eq:bndpnpfbs_2} captures the performance of the denoiser, and the fourth term corresponds to the sampling error when constructing the denoiser.\\
	We further need to uniformize \eqref{assum:restinj}. For this, let
	\[
	\widetilde{\lambda}_{\min}(BA) \eqdef \inf_{\bz \in {\supp(\mu_\bX)}} \lambda_{\min}\pa{BA;T_{\conv{\supp(\mu_\bX)}}(\bz)} .
	\] 
	To ensure $\widetilde{\lambda}_{\min}(BA)>0$, we need a stronger assumption than \eqref{assum:restinj}.
	\begin{altassumption}{assum:restinj}\label{assum:restinj_unif}
		\begin{equation*}
			\supp(\mu_{\bX}) \cap \ri(\conv{\supp(\mu_{\bX})}) \neq \emptyset
		\end{equation*}
		and 
		\begin{equation*}
			\ker(BA) \cap \Par(\conv{\supp(\mu_{\bX})}) = \{0\} ,
		\end{equation*}
		where for a non-empty convex set $\cC$, $\Par(\cC)$ is the subspace parallel to $\cC$, i.e., for any point $\bx \in \cC$, we have
			\[
			\Par(\cC) = \Aff(\cC) - \ens{\bx} = \bbR(\cC - \cC) = \bbR_+(\cC - \cC) .
			\]
	\end{altassumption}

	
	\begin{lemma}\label{lem:uniformization_restricted_injectivity}
		Under assumptions \eqref{assum:Xcompact} and \eqref{assum:restinj_unif} it holds $\widetilde{\lambda}_{\min}(BA) >0$.
	\end{lemma}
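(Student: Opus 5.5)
The plan is to bound every conical singular value appearing in the infimum from below by one quantity that does not depend on $\bz$: the minimal singular value of $BA$ restricted to the linear subspace $\Par(\conv{\supp(\mu_\bX)})$. Write $\cC \eqdef \conv{\supp(\mu_\bX)}$. By \eqref{assum:Xcompact}, $\supp(\mu_\bX)$ is a non-empty compact set. By \cite[Theorem~3.20]{Rudin91}, as already used in Lemma~\ref{lem:fixpnpfbs}, $\cC$ is then a non-empty compact convex set. So $\Par(\cC)$ is a well-defined linear subspace of $\bbR^n$, and in particular it is closed.

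The first step is the inclusion $T_{\cC}(\bz) \subset \Par(\cC)$ for every $\bz \in \supp(\mu_\bX) \subset \cC$. Indeed, $\cC - \bz \subset \cC - \cC \subset \Par(\cC)$. Since $\Par(\cC)$ is a subspace, it is stable under multiplication by $\bbR_+$, so $\bbR_+(\cC - \bz) \subset \Par(\cC)$. Since it is closed, it also contains the closure, which is $T_{\cC}(\bz)$. Taking the infimum over a larger set can only decrease it, so for every such $\bz$,
\[
\lambda_{\min}\pa{BA;T_{\cC}(\bz)} = \inf_{\bu \in T_{\cC}(\bz)\cap \sph^{n-1}} \norm{BA\bu} \geq \inf_{\bu \in \Par(\cC)\cap \sph^{n-1}} \norm{BA\bu} \eqdef \lambda_{\min}\pa{BA;\Par(\cC)} .
\]
Taking the infimum over $\bz \in \supp(\mu_\bX)$ gives $\widetilde{\lambda}_{\min}(BA) \geq \lambda_{\min}(BA;\Par(\cC))$.

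The second step is to show $\lambda_{\min}(BA;\Par(\cC)) > 0$. Suppose first that $\Par(\cC) \neq \{0\}$. Then $\Par(\cC)\cap\sph^{n-1}$ is non-empty and compact, and $\bu \mapsto \norm{BA\bu}$ is continuous, so the infimum is attained at some unit vector $\bu^\star \in \Par(\cC)$. If it were zero, then $\bu^\star \in \ker(BA)\cap\Par(\cC) = \{0\}$ by the second part of \eqref{assum:restinj_unif}. This contradicts $\norm{\bu^\star} = 1$, so the minimum is positive. In the degenerate case where $\Par(\cC) = \{0\}$, $\mu_\bX$ is a Dirac mass and every tangent cone is $\{0\}$. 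The infimum is then over an empty set and equals $+\infty$ by convention, so the claim holds trivially; I would state this convention explicitly.

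I do not expect a genuine obstacle; the only delicate point is checking that the tangent cone, defined through a closure, stays inside the subspace, and this uses only that $\Par(\cC)$ is closed. The first condition in \eqref{assum:restinj_unif}, $\supp(\mu_\bX)\cap\ri(\cC)\neq\emptyset$, is not needed for positivity. I would remark that it gives sharpness: at a point $\bz$ in $\supp(\mu_\bX)\cap\ri(\cC)$, the cone $\bbR_+(\cC-\bz)$ is already all of $\Par(\cC)$, so $T_{\cC}(\bz) = \Par(\cC)$ and the infimum equals $\lambda_{\min}(BA;\Par(\cC))$ exactly.
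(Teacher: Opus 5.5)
Your proof is correct and uses the same ingredients as the paper's proof. These are the inclusion $T_{\conv{\supp(\mu_\bX)}}(\bz)\subset\Par(\conv{\supp(\mu_\bX)})$, compactness of the unit sphere of that subspace, and the kernel condition in \eqref{assum:restinj_unif}.

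The difference lies in how the hypotheses are used. The paper uses $\supp(\mu_\bX)\cap\ri(\conv{\supp(\mu_\bX)})\neq\emptyset$ to upgrade the inclusion to the equality $\bigcup_{\bz\in\supp(\mu_\bX)}T_{\conv{\supp(\mu_\bX)}}(\bz)=\Par(\conv{\supp(\mu_\bX)})$. It then writes $\widetilde{\lambda}_{\min}(BA)$ directly as a minimum over a compact set. You observe instead that the inclusion alone already gives $\widetilde{\lambda}_{\min}(BA)\geq\lambda_{\min}(BA;\Par(\conv{\supp(\mu_\bX)}))>0$. This shows that the relative-interior part of \eqref{assum:restinj_unif} is not needed for positivity; it only yields the equality you mention. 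Your explicit treatment of the Dirac case, where the paper's minimum is taken over an empty set, is also a small improvement.
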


	\begin{remark}
		Assumption \eqref{assum:restinj_unif} can be weakened if one assumes that the restricted injectivity property holds for all $\bx \in \supp(\mu_{\bX})$ and that the core outer limit (see \cite{Luc_2008}) of the tangent cones is contained in their union. A detailed statement and proof can be found in appendix~\ref{app:general_uniformization}.
	\end{remark}
	Define $\widetilde{\kappa}(BA)$ and $\widetilde{\gamma}$ accordingly by replacing $\lambda_{\min}\pa{BA;T_{\conv{\supp(\mu_\bX)}}(\bz)}$ by $\widetilde{\lambda}_{\min}(BA)$ in \eqref{eq:condnum}, and \eqref{eq:optgamma} respectively. Lastly, we recall that the Minkowski dimension of  a measure $\mu$ is given as \[
	d_{\mathrm{M}}(\mu) \eqdef \inf\{\dim_\mathrm{M}(\cC): \mu(\cC) = 1\},
	\]
	where $\dim_\mathrm{M}(\cC)$ is the Minkowski-Bouligand or box-counting dimension \cite{Falconer03}. The latter is defined as  
	\begin{equation}
		\dim_\mathrm{M}(\cC) := \lim\sup_{\epsilon\to 0^+}\frac{\log N_{\epsilon}(\cC)}{-\log\epsilon},
	\end{equation}
	and $N_{\epsilon}(\cC)$ is the $\epsilon$-covering number of the set $\cC$ given by the smallest $m$ such that there exist $m$ closed balls $\cl{\Ball}_1,\dots,\cl{\Ball}_m$  of diameter $\epsilon$ covering $\cC$, i.e.,   $\cC\subseteq \bigcup_{i=1}^m \cl{\Ball}_i$.\\
	We are now ready to state our main recovery theorem. For this, we emphasize the dependence of the iterates $\bX_k$ on the random variables from which it is built, i.e., 
	\begin{equation}\label{eq:xkx0y}
	\bX_k(\bX_0,\bY) = T_{\mu_{\bX}^N,\gamma}^k(\bX_0;\bY)=T_{\mu_{\bX}^N,\gamma}^k(\bX_0;A\bX+\bE) ,
	\end{equation}
	where $\bX \sim \mu_{\bX}$ and $\bE$ satisfies \eqref{assum:noiseonY}, and we suppose without loss of generality that $\bX_0$ is drawn randomly from $\mu_\bX^N$ but is independent from all other variables. The iterate $\bX_k$ also depends on the denoiser $\widehat{D}_{\mu_{\bX}^N}$, hence on the empirical measure $\mu_{\bX}^N$ constructed from the (random) samples $\cD_N^\bX$. We let $\widehat{\mu}_k^N$ be the law of $\bX_k$ conditionally on $\cD_N^\bX$. \\

	\begin{theorem}\label{thm:Wrecovbndpnpfbs}
		Assume that \eqref{assum:noiseonY}--\eqref{assum:Xnoise},\eqref{assum:ranB}--\eqref{assum:BA}, and \eqref{assum:restinj_unif} hold, and that $\mu_{\bX}$ fulfills \eqref{assum:XV}. Suppose also that either \begin{equation}
			\frac{M^2}{\lambda_{\min}(\widetilde{\Sigma}_{\bUpsilon})} \leq 1,
		\end{equation}
		or otherwise, that 
		\[
		\widetilde{\kappa}(BA) < \pa{1 - \frac{\lambda_{\min}(\widetilde{\Sigma}_{\bUpsilon})^2}{M^4}}^{-1/2}.
		\]
		Additionally assume $d_{\mathrm{M}}(\mu_\bX) \geq 4$ and let $s> d_{\mathrm{M}}(\mu_\bX)$. Then it holds for the PnP-iteration with stepsize $\widetilde{\gamma} = \pa{\widetilde{\kappa}(BA)\lambda_{\max}(BA)}^{-1}$ that
		\begin{align}\label{eq:Wasserstein_recovery}
			\Expect{\cD_N^\bX}{\cW_2(\widehat{\mu}_k^N,\mu_{\bX})}
			&\lesssim \widetilde{q}^{k+1}\sqrt{\Expect{}{\norm{\bX_0 - \bX}^2}} \notag\\
			+ \frac{1-\widetilde{q}^{k}}{1-\widetilde{q}}\Bigg[&
			\frac{M^2}{\lambda_{\min}(\widetilde{\Sigma}_{\bUpsilon})}\pa{\widetilde{\gamma}^{2}\tr{B \Sigma_\bE B^\top} + \tr{\widetilde{\Sigma}_{\bUpsilon}} 
				- 2\widetilde{\gamma}\tr{\widetilde{\Sigma}_{\bUpsilon}^{1/2}\Pi^\top B \Sigma_\bE B^\top\Pi\widetilde{\Sigma}_{\bUpsilon}^{1/2}}^{1/2}}^{1/2} \notag\\
			&+ \sqrt{\lambda_{\max}(\widetilde{\Sigma}_{\bUpsilon}) r}
			+ \pa{1+\frac{M^2}{\lambda_{\min}(\widetilde{\Sigma}_{\bUpsilon})}}\diam(\cX)N^{-1/s} \Bigg],
		\end{align}
		where $\widetilde{q} = \frac{M^2}{\lambda_{\min}(\widetilde{\Sigma}_{\bUpsilon})} \sqrt{1 - \widetilde{\kappa}\pa{BA}^{-2}} \in [0,1[$, and $\Expect{\cD_N^\bX}{\cdot}$ denotes the expectation with respect to the sampling.
	\end{theorem}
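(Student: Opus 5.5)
The plan is to start from the random-vector error decomposition \eqref{eq:bndpnpfbs_2} and turn it into a recursion in $L^2$. Fix a realization of $\cD_N^\bX$ and choose the couplings of $(\bX,\bX^N,\bUpsilon,\bE)$ freely. Since $\cW_2(\widehat{\mu}_k^N,\mu_\bX)\le \Expect{}{\norm{\bX_k-\bX}^2}^{1/2}$ for any coupling, it suffices to bound $e_k \eqdef \Expect{}{\norm{\bX_k-\bX}^2}^{1/2}$. Taking $L^2$ norms in \eqref{eq:bndpnpfbs_2} and applying Minkowski's inequality gives
\[
e_{k+1}\le L\,\sup\norm{(I_n-\widetilde{\gamma}BA)(\bX_k-\bX)}/\norm{\bX_k-\bX}\; e_k + L\,\Expect{}{\norm{\widetilde{\gamma}B\bE-\bUpsilon}^2}^{1/2} + \Expect{}{\norm{\widehat{D}_{\mu_\bX^N}(\bX^N+\bUpsilon)-\bX^N}^2}^{1/2}+(1+L)\Expect{}{\norm{\bX^N-\bX}^2}^{1/2},
\]
where $L=\lip{\widehat{D}_{\mu_\bX^N}}\le M^2/\lambda_{\min}(\widetilde{\Sigma}_{\bUpsilon})$ by Lemma~\ref{lem:Dmulip}.

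For the contraction factor, I will use that $\bX_k,\bX\in\conv{\supp(\mu_\bX)}$, by Remark~\ref{rem:mmseconvX} and since $\bX_0\sim\mu_\bX^N$. Hence $\bX_k-\bX\in T_{\conv{\supp(\mu_\bX)}}(\bX)$. Since $BA$ is symmetric positive semidefinite by \eqref{assum:BA}, and $\norm{BA\bz}\ge\widetilde{\lambda}_{\min}(BA)\norm{\bz}$ on these cones, with $\widetilde{\lambda}_{\min}(BA)>0$ by Lemma~\ref{lem:uniformization_restricted_injectivity}, I expand
\[
\norm{(I-\widetilde{\gamma}BA)\bz}^2=\norm{\bz}^2-2\widetilde{\gamma}\dotp{BA\bz}{\bz}+\widetilde{\gamma}^2\norm{BA\bz}^2 .
\]
Then I use $\dotp{BA\bz}{\bz}\ge\norm{BA\bz}^2/\lambda_{\max}(BA)$ together with $\widetilde{\gamma}=(\widetilde{\kappa}\lambda_{\max})^{-1}$ to obtain the factor $\sqrt{1-\widetilde{\kappa}(BA)^{-2}}$. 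This mirrors the pointwise theorem, so I take it as the same computation. Thus $e_{k+1}\le\widetilde{q}e_k+c$, and unrolling gives the geometric prefactors.

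Next come the three constant terms. For the noise-mismatch term, both $\widetilde{\gamma}B\bE$ and $\bUpsilon$ are Gaussians supported in $V$, using \eqref{assum:ranB}. I will take the optimal Gaussian coupling, so the term equals $\cW_2(\cN(0,\widetilde{\gamma}^2B\Sigma_\bE B^\top),\cN(0,\Sigma_\bUpsilon))$. In the coordinates $\Pi$ this is the Bures formula, which gives exactly the bracketed trace expression. For the denoiser term, since the conditional mean minimizes the MSE, $\Expect{}{\norm{\widehat{D}_{\mu_\bX^N}(\bX^N+\bUpsilon)-\bX^N}^2}\le\Expect{}{\norm{\bUpsilon}^2}=\tr{\widetilde{\Sigma}_{\bUpsilon}}\le r\lambda_{\max}(\widetilde{\Sigma}_{\bUpsilon})$, using the trivial competitor $D=\mathrm{id}$. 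For the sampling term, I couple $\bX$ and $\bX^N$ optimally, so it equals $\cW_2(\mu_\bX^N,\mu_\bX)$. To make the couplings compatible, I glue them: $(\bX,\bX^N)$ optimal and independent of the optimal pair $(\bE,\bUpsilon)$. I must check that \eqref{eq:bndpnpfbs_2} only requires $\bUpsilon$ to be independent of $\bX^N$ for the conditional-mean step, which this gluing satisfies.

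Finally, I take $\Expect{\cD_N^\bX}{\cdot}$. Everything except the $\cW_2(\mu_\bX^N,\mu_\bX)$ term is deterministic; note that $L$ is bounded uniformly in the sample. For that term I invoke the Weed--Bach rate: $\Expect{}{\cW_p(\mu^N,\mu)}\lesssim \diam(\cX)N^{-1/s}$ for $s>d_{\mathrm M}(\mu)$ and $s>2p$. With $p=2$, the hypothesis $d_{\mathrm M}\ge4$ is what makes $s>4$ admissible. The main obstacle I expect is rigor in the $L^2$ recursion. The iterate $\bX_k$ depends on $\bE$, $\bX$ and $\bX_0$, and the inequality \eqref{eq:bndpnpfbs_2} must hold pathwise under a single joint construction valid for all $k$. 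The denoiser-term bound must also still hold after gluing. Keeping $(\bX^N,\bUpsilon)$ independent, with $\bE$ coupled only to $\bUpsilon$, should resolve this.
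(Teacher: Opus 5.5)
Your route is the paper's proof: the same coupling and Minkowski recursion built on \eqref{eq:bndpnpfbs_2}, the Gelbrich--Bures formula for the noise-mismatch term, and MMSE optimality against the identity for the denoiser term. The sampling term uses an empirical-measure rate for $\cW_2(\mu_\bX^N,\mu_\bX)$; you use Weed--Bach instead of the bound the paper cites, which makes no difference. Your explicit gluing, with $(\bX,\bX^N)$ independent of $(\bE,\bUpsilon)$, makes precise what the paper leaves implicit. It ensures that $\bX_k$ really has law $\widehat{\mu}_k^N$ and that the bound $\tr{\widetilde{\Sigma}_{\bUpsilon}}$ applies to the denoiser term.

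The step that fails is the contraction factor. If you carry out the chain you describe, you get, for $\bz$ in the tangent cones,
\[
\norm{(I_n-\widetilde{\gamma}BA)\bz}^2\le\norm{\bz}^2-\Big(\frac{2\widetilde{\gamma}}{\lambda_{\max}(BA)}-\widetilde{\gamma}^2\Big)\norm{BA\bz}^2\le\Big(1-\frac{2\widetilde{\kappa}(BA)-1}{\widetilde{\kappa}(BA)^{4}}\Big)\norm{\bz}^2 .
\]
Since $(2\widetilde{\kappa}-1)\widetilde{\kappa}^{-4}<\widetilde{\kappa}^{-2}$ whenever $\widetilde{\kappa}>1$ (this is equivalent to $(\widetilde{\kappa}-1)^2>0$), you do not reach $\sqrt{1-\widetilde{\kappa}(BA)^{-2}}$.

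Deferring to the pointwise proof does not help. There the paper bounds $\dotp{BA\bz}{\bz}$ from below by $\lambda_{\min}(BA;T)\norm{\bz}^2$. But $\lambda_{\min}(BA;T)$ is defined as a conical singular value. Unless the cone is an invariant subspace of $BA$, one only has $\dotp{BA\bz}{\bz}\ge\norm{BA\bz}^2/\lambda_{\max}(BA)$.

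Your weaker factor is in fact sharp. Take $BA=\mathrm{diag}(1,0)$ and a cone spanned by $\bz=(\cos\theta,\sin\theta)$ with $\theta\in]0,\pi/2[$. Then $\widetilde{\kappa}=1/\cos\theta$ and $\widetilde{\gamma}=\cos\theta$, and
$\norm{(I_2-\widetilde{\gamma}BA)\bz}^2=1-\cos^3\theta\,(2-\cos\theta)>\sin^2\theta=1-\widetilde{\kappa}^{-2}$.
Padding with an identity block, and taking $\mu_\bX$ uniform on a $4$-dimensional ball in $\bbR\bz\oplus\bbR^3$ with large isotropic $\Sigma_{\bUpsilon}$, is compatible with every hypothesis of the theorem, including $d_{\mathrm{M}}(\mu_\bX)\ge4$. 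So the per-step bound $\norm{(I_n-\widetilde{\gamma}BA)(\bX_k-\bX)}\le\sqrt{1-\widetilde{\kappa}(BA)^{-2}}\,\norm{\bX_k-\bX}$ is false for general directions in the cones under the stated definitions. Both your recursion and the paper's rest on it.

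Any one of the following repairs closes the gap:
(a) Define $\widetilde{\lambda}_{\min}(BA)$ as the restricted eigenvalue $\inf\dotp{BA\bz}{\bz}/\norm{\bz}^2$ over the cones. Then the paper's expansion $1-2\gamma\widetilde{\lambda}_{\min}(BA)+\gamma^2\lambda_{\max}(BA)^2$ is legitimate, and Lemma~\ref{lem:uniformization_restricted_injectivity} still holds because $BA\succeq0$.
(b) Keep the singular-value definition but use the step $\gamma=1/\lambda_{\max}(BA)$. Then $\norm{BA\bz}^2\le\lambda_{\max}(BA)\dotp{BA\bz}{\bz}$ gives $\norm{(I_n-\gamma BA)\bz}^2\le\norm{\bz}^2-\dotp{BA\bz}{\bz}/\lambda_{\max}(BA)\le(1-\widetilde{\kappa}(BA)^{-2})\norm{\bz}^2$. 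The price is that $1/\lambda_{\max}(BA)$ replaces $\widetilde{\gamma}$ in the Bures term.
(c) Keep $\widetilde{\gamma}$, put your sharp factor into $\widetilde{q}$, and strengthen the conditioning hypothesis so that this $\widetilde{q}<1$.

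Finally, unrolling $e_k\le\widetilde{q}e_{k-1}+c$ gives $\widetilde{q}^{k}e_0$, not the $\widetilde{q}^{k+1}$ in the statement. The paper's proof has the same off-by-one.
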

	
	\begin{remark}
		Estimate \eqref{eq:Wasserstein_recovery} implies
		\begin{multline*}
			\limsup_{k \to +\infty, N \to +\infty} (1-\widetilde{q})\Expect{\cD_N^\bX}{\cW_2(\widehat{\mu}_k^N,\mu_{\bX})} 
			\lesssim \frac{M^2}{\lambda_{\min}(\widetilde{\Sigma}_{\bUpsilon})}\Bigg(\widetilde{\gamma}^{2}\tr{B \Sigma_\bE B^\top} + \tr{\widetilde{\Sigma}_{\bUpsilon}} \\
			- 2\widetilde{\gamma}\tr{\widetilde{\Sigma}_{\bUpsilon}^{1/2}\Pi^\top B \Sigma_\bE B^\top\Pi\widetilde{\Sigma}_{\bUpsilon}^{1/2}}^{1/2}\Bigg)^{1/2}
			+ \sqrt{\lambda_{\max}(\widetilde{\Sigma}_{\bUpsilon}) r} .
		\end{multline*}
		The term encoding the discrepancy between the two Gaussian noises $\widetilde{\gamma} B\bE$ and $\bUpsilon$ is known as the Bures metric, see e.g., \cite{Gelbrich90}. Notably, it depends on the step size $\widetilde{\gamma}$. This suggests that the covariance $\widetilde{\Sigma}_{\bUpsilon}$ should depend on $\widetilde{\gamma}$ as well to make this term small or even vanish. Intuitively, this reflects the fact that the denoiser has to account somehow for the step-size in a similar vein as for the proximal mapping with which the analogy is usually made, though this analogy is questionable as we argued before. As our analysis shows (compare also to the definition of $\widetilde{\gamma}$), the optimal step size depends on the conditioning of the inverse problem. This means that $\widetilde{\Sigma}_{\bUpsilon}$, hence the denoiser, should also depend on this conditioning. Again, this stands in contrast with the widespread idea that the denoiser used in PnP methods can be designed independently of the underlying inverse problem. In particular, the choice $\Sigma_{\bUpsilon} = \widetilde{\gamma}^{2}B \Sigma_\bE B^\top$ yields 
		\begin{equation}
			\limsup_{k \to +\infty, N \to +\infty} \Expect{\cD_N^\bX}{\cW_2(\widehat{\mu}_k^N,\mu_{\bX})} \lesssim (1-\widetilde{q})^{-1}\widetilde{\gamma} \norm{B} \sqrt{\lambda_{\max}(\Sigma_\bE) r} ,
		\end{equation}
		where now $r = \rank(B)$. Moreover, with the choice $\Sigma_{\bUpsilon} = \widetilde{\gamma}^{2}B \Sigma_\bE B^\top$ for $B = A^\top C$ or $B=A^+C$ and $C = \Sigma_{\bE}^{-\frac{1}{2}}$ or $C = (I + A^\top A^{-1})$ (see \cite{Leterme25}), assumption \eqref{assum:restinj} is not needed as mentioned above. In this case, one could also replace the conical minimal singular value $\widetilde{\lambda}(BA)$ by the smallest singular value of $BA$ over its whole domain. This, however yields a smaller step size $\widetilde{\gamma}$.
		On the other hand, if one chooses $\Sigma_{\bUpsilon} = \sigma^2I_r$ for some $\sigma>0$, the noise discrepancy in the bound \eqref{eq:Wasserstein_recovery} will not vanish. In fact, as in the pointwise analysis, one cannot afford adjusting freely the variance of the noise $\bUpsilon$ when designing the MMSE denoiser to make it arbitrarily contractive as this will harm the the error term in the Bures metric between $\widetilde{\gamma}B\bE$ and $\bUpsilon$.
	\end{remark}

\begin{remark}
The scaling in $N$ in \eqref{eq:Wasserstein_recovery} can be sharpened with a finer notion of dimension. See the discussion in Remark~\ref{rem:Wdiscrete}.
\end{remark}

In addition to proving the bound on the expected value of the quantity $\cW_2(\widehat{\mu}_k^N,\mu_{\bX})$, we also show a high-probability error bound.
	
	\begin{proposition}\label{prop:high_prob_Wasserstein}
	Suppose that the assumptions of Theorem~\ref{thm:Wrecovbndpnpfbs} hold and that $d_{\mathrm{M}}(\mu_\bX) \geq 2$. Let $s> 2d_{\mathrm{M}}(\mu_\bX)$. It holds with probability $1-\exp(-2N^{1-4/s})$ over the sampling of $\cD_N^\bX$, that
		\begin{align*}
			\cW_2(\widehat{\mu}_k^N,\mu_{\bX}) &\lesssim \Bigg( \widetilde{q}^{k+1}\sqrt{\Expect{}{\norm{\bX_0 - \bX}^2}} \notag\\
			&+ \frac{1-\widetilde{q}^{k}}{1-\widetilde{q}}\Bigg[
			\frac{M^2}{\lambda_{\min}(\widetilde{\Sigma}_{\bUpsilon})}\pa{\widetilde{\gamma}^{2}\tr{B \Sigma_\bE B^\top} + \tr{\widetilde{\Sigma}_{\bUpsilon}} 
				- 2\widetilde{\gamma}\tr{\widetilde{\Sigma}_{\bUpsilon}^{1/2}\Pi^\top B \Sigma_\bE B^\top\Pi\widetilde{\Sigma}_{\bUpsilon}^{1/2}}^{1/2}}^{1/2} \notag\\
			&+ \sqrt{\lambda_{\max}(\widetilde{\Sigma}_{\bUpsilon}) r}
			+ \pa{1+\frac{M^2}{\lambda_{\min}(\widetilde{\Sigma}_{\bUpsilon})}}\diam(\cX)N^{-1/s} \Bigg] \Bigg) .
		\end{align*} 
	\end{proposition}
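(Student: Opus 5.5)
The plan is to reuse the deterministic (conditional on $\cD_N^\bX$) part of the proof of Theorem~\ref{thm:Wrecovbndpnpfbs} verbatim, and to replace only the final step, where the expectation over the sampling is taken, by a concentration argument. Fix a realization of $\cD_N^\bX$. The argument leading to \eqref{eq:Wasserstein_recovery} starts from \eqref{eq:bndpnpfbs_2}. We take $L^2$ norms over all randomness except the sampling, choosing the couplings appropriately: $\bX^N$ optimally coupled to $\bX$, and $\bUpsilon$ optimally coupled to $\widetilde\gamma B\bE$, which produces the Bures term. We then use $\lip{\widehat{D}_{\mu_\bX^N}}\le M^2/\lambda_{\min}(\widetilde\Sigma_\bUpsilon)$ from Lemma~\ref{lem:Dmulip}, the contraction estimate $\norm{(I_n-\widetilde\gamma BA)(\bX_k-\bX)}\le\sqrt{1-\widetilde\kappa(BA)^{-2}}\norm{\bX_k-\bX}$ (valid via Lemma~\ref{lem:uniformization_restricted_injectivity}), and the denoiser performance bound $\sqrt{\lambda_{\max}(\widetilde\Sigma_\bUpsilon)r}$. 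Unrolling the recursion gives a pathwise-in-$\cD_N^\bX$ inequality
\[
\cW_2(\widehat\mu_k^N,\mu_\bX)\lesssim \widetilde q^{k+1}\sqrt{\Expect{}{\norm{\bX_0-\bX}^2}}+\frac{1-\widetilde q^k}{1-\widetilde q}\Big[\text{Bures term}+\sqrt{\lambda_{\max}(\widetilde\Sigma_\bUpsilon)r}+\Big(1+\tfrac{M^2}{\lambda_{\min}(\widetilde\Sigma_\bUpsilon)}\Big)\cW_2(\mu_\bX^N,\mu_\bX)\Big].
\]
Here I use $\cW_2(\widehat\mu_k^N,\mu_\bX)\le\Expect{}{\norm{\bX_k-\bX}^2}^{1/2}$ and the optimal coupling for $\norm{\bX^N-\bX}$. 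This is exactly the inequality before the expectation in the proof of Theorem~\ref{thm:Wrecovbndpnpfbs}. So everything reduces to a high-probability bound $\cW_2(\mu_\bX^N,\mu_\bX)\lesssim \diam(\cX)N^{-1/s}$.

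For that, I will write $\cW_2(\mu_\bX^N,\mu_\bX)^2\le \diam(\cX)\,\cW_1(\mu_\bX^N,\mu_\bX)$, using compactness. I then apply McDiarmid's bounded-difference inequality to $F(\bx^1,\dots,\bx^N)=\cW_1(\mu_\bX^N,\mu_\bX)$. Changing one sample moves $F$ by at most $\diam(\cX)/N$, so
\[
\bbP\big(F\ge \Expect{}{F}+t\big)\le \exp\!\big(-2N t^2/\diam(\cX)^2\big).
\]
The expectation is controlled by the Weed--Bach type bound $\Expect{}{\cW_1(\mu_\bX^N,\mu_\bX)}\lesssim \diam(\cX)N^{-1/s'}$ for any $s'>d_{\mathrm{M}}(\mu_\bX)$, which requires $d_{\mathrm M}\ge 2$ for the $\cW_1$ rate; this is the same ingredient used for Theorem~\ref{thm:Wrecovbndpnpfbs}. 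Choosing $t=\diam(\cX)N^{-2/s}$ gives failure probability $\exp(-2N^{1-4/s})$. Since $s>2d_{\mathrm M}$, we have $s/2>d_{\mathrm M}$, so $\Expect{}{F}\lesssim\diam(\cX)N^{-2/s}$ as well. Hence $F\lesssim \diam(\cX)N^{-2/s}$, and therefore $\cW_2(\mu_\bX^N,\mu_\bX)\lesssim\diam(\cX)N^{-1/s}$, on the good event. Substituting this into the displayed inequality yields the claim.

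The main obstacle is matching the exponents. This is why I route through $\cW_1$ and the square-root loss, rather than concentrating $\cW_2$ directly: that route explains both the doubled threshold $s>2d_{\mathrm M}$ and the $N^{1-4/s}$ exponent. A secondary point to check is that the fixed-sample inequality really holds for every realization of $\cD_N^\bX$, not merely in expectation. In particular, the Lipschitz and denoiser-performance bounds must be uniform in the samples. They are, since they depend only on $M$ and $\widetilde\Sigma_\bUpsilon$.
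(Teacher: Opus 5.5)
Your proposal is correct and follows essentially the same route as the paper. You reuse the conditional estimate \eqref{eq:estimate_proof_Wasserstein} and reduce to a high-probability bound $\cW_2(\mu_\bX^N,\mu_\bX)\lesssim\diam(\cX)N^{-1/s}$, using a bounded-differences (McDiarmid) inequality, the comparison $\cW_2^2\le\diam(\cX)\cW_1$, the Boissard/Weed--Bach rate for the expected value of $\cW_1(\mu_\bX^N,\mu_\bX)$ with $s'=s/2$, and $t$ of order $N^{-2/s}$. The only difference is the order of steps: the paper concentrates $\cW_2^2$ (Weed--Bach, Proposition~20) and applies the comparison inside the expectation, whereas you apply it pathwise and then concentrate $\cW_1$, which yields the same exponent $N^{1-4/s}$ and the same rate $N^{-1/s}$.
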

	
	\subsection{Recovery guarantees for PnP with neural network denoiser}\label{sec:training}
	While the empirical MMSE denoiser enjoys the recovery guarantees from the previous section, its evaluation is computationally not feasible for large sample sizes and in high dimension. We therefore extend our analysis where the MMSE is replaced by a denoiser parametrized by a trained neural network. In view of assumption \eqref{assum:XV}, we identify $V$ with $\R^r$ and consider the class of ReLU neural networks $D: V \to V$  with $L = \mathrm{depth}(D)\in \N$ layers. These are recursively defined via
	\begin{align*}
		&D_0(\bx) = \bx\\
		&D_l(\bx) = \max(0,A_lD_{l-1}(\bx) + \bb_l)\quad \text{ for } l = 1,\dots,{L-1}\\
		&D(\bx) = A_L D_{L-1}(\bx) + \bb_L,
	\end{align*}
	where $A_l \in \R^{n_l\times n_{l-1}}$ with $n_0 =n_L = r$, and $\bb_l \in \R^{n_l}$ and the maximum is taken component-wise. We denote the weights of $D$, i.e. the total number of non-zero entries in $A_1,\dots,A_L, \bb_1,\dots,\bb_L$, by $W(D)$. 
	
	As for the empirical MMSE denoiser, we assume that we have access to an i.i.d. sample $\cD_N^{\bX} =\enscond{\bx^i}{i \in [N]} $ from $\mu_\bX$. We further sample i.i.d. noise vectors $\cD_N^{\bUpsilon} = \enscond{\bupsilon^i}{i \in [N]}$ from $\cN(0,\Sigma_{\bUpsilon})$. Following \eqref{eq:mmse}, we now parametrize the class $\scrD(\R^n,\R^n)$ by neural networks. We therefore consider the denoiser $\widehat{D}^N$ defined as 
	\begin{equation}\label{eq:loss}
		\widehat{D}^N \in \Argmin_{D \in \cN\cN(W,L,\beta)}\sum\limits_{i = 1}^N \norm{D(\bx^i+\bupsilon^i) - \bx^i}^2 ,
	\end{equation}
	where the class of feasible denoisers is
	\begin{align}\label{eq:NN_feasible_set}
		\cN\cN(W,L,\beta) \eqdef \big\{&D:V\to V  \text{ is a ReLU neural network with} \notag\\&W(D)\le W,\, \mathrm{depth}(D) \le L, \text{ whose weights are bounded by }\beta \text{, and }  D(\cQ)\subset \cQ \big\} .
	\end{align}
	Here $\cQ$ is a closed cube with $ {{\supp(\mu_{\bX})}}\subset\cQ$. In particular, we may assume that the sidelengths of $\cQ$ are bounded by $\diam(\cX) \leq 2M$. Note that any $D \in \cN\cN(W,L,\beta)$ is Lipschitz continuous, but the exact computation of its Lipschitz constant is NP-hard \cite{Scaman2018}. \\
	
	In order to make use of quantitative approximation estimates for Lipschitz functions on manifolds with neural networks, we make the following assumption.
	\begin{assumption}\label{assum:manifold}
		$\cS\eqdef {{\supp(\mu_{\bX})}}$ is a $d_{\cS}$-dimensional manifold.
	\end{assumption}
	Note that by compactness of $\supp(\mu_{\bX})$, for any $\delta >0$, the manifold $\cS$ is $(R,\delta)$-covered, i.e. there exist $R$ open balls $\Ball_1,\dots,\Ball_R$ of radius $\frac{\delta}{4}$, centered in $\cS$ with \begin{equation*}
		\cS \subseteq \bigcup\limits_{i = 1}^R \Ball_i.
	\end{equation*} 
	In view of \eqref{eq:loss}, a good denoiser should be a stable approximation of the identity on $\cS$. This is the rationale behind the following approximation lemma which motivates the choice of hyperparameters in the class $\cN\cN(W,L,\beta)$ in \eqref{eq:NN_feasible_set}.
	
	\begin{lemma}\label{lem:NN_existence_uniform}
		Assume that \eqref{assum:manifold} holds. Let $\delta>0$ and $R\in \N$ such that $\cS$ is $(R,\delta)$-covered. For any $\epsilon \in ]0,1[$, there exists a neural network $\widehat{D}_\epsilon\in \cN\cN(W^*_\epsilon,L^*,\beta^*_\epsilon)$ with \begin{align*}
			&W^*_\epsilon \lesssim rR^{d_{\cS}+1}\pa{\frac{\epsilon}{\diam(\cS)}}^{-d_{\cS}},\\
			&L^* = \lceil \log_2(d_{\cS}) \rceil + \lceil \log_2(r) \rceil + c,\\
			&\beta^*_\epsilon \lesssim \sqrt{r}\pa{\frac{\epsilon}{M}}^{-p}
		\end{align*}
		such that \begin{equation}
			\norm{\widehat{D}_\epsilon(\bx)-\bx}\le \epsilon
		\end{equation}
		for all $\bx \in \cS$, where $c\in \N$ is a universal constant and $p>0$ only depends on $\cS$.
	\end{lemma}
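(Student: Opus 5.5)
The plan is to reduce the statement to a known approximation result for Lipschitz (indeed smooth) functions on low-dimensional manifolds by ReLU networks, applied coordinatewise to the identity map on $\cS$. Under \eqref{assum:manifold}, $\cS$ is a compact $d_{\cS}$-dimensional manifold embedded in $V\cong\R^r$. I would invoke a Chen--Jiang--Liao--Zhao type theorem in the form: if $\cS$ is $(R,\delta)$-covered and $f:\cS\to\R$ is Lipschitz, then for every $\epsilon\in]0,1[$ there is a ReLU network $\tilde f$ with depth $\lceil\log_2 d_{\cS}\rceil + c$, weights bounded polynomially in $\epsilon^{-1}$, and at most $\lesssim R^{d_{\cS}+1}(\epsilon/\diam(\cS))^{-d_{\cS}}$ nonzero weights such that $\sup_{\cS}|f-\tilde f|\le \epsilon/\sqrt r$.

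Step 1: apply this to each coordinate $f_j(\bx)=\langle \bx, e_j\rangle$, $j\in[r]$, which are $1$-Lipschitz and bounded by $M$ on $\cS$. The accuracy $\epsilon/\sqrt r$ only changes constants and the exponent $p$ in the weight bound; the normalization $f_j/M$ explains the factor $(\epsilon/M)^{-p}$, and rescaling back by $M$ together with the $\sqrt r$ produces $\beta^*_\epsilon\lesssim\sqrt r(\epsilon/M)^{-p}$. Step 2: stack the $r$ subnetworks in parallel, padding shallower ones with identity layers $x=\max(0,x)-\max(0,-x)$ so all have equal depth. This gives a network $V\to V$ with $W\lesssim rR^{d_{\cS}+1}(\epsilon/\diam\cS)^{-d_{\cS}}$ and $\norm{\widehat D_\epsilon(\bx)-\bx}\le\sqrt r\cdot\epsilon/\sqrt r=\epsilon$ on $\cS$.

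Step 3 is to enforce the constraint $D(\cQ)\subset\cQ$ from \eqref{eq:NN_feasible_set}. I would compose with a coordinatewise clipping onto the cube $\cQ=\prod_j[a_j,b_j]$, namely $t\mapsto a_j+\max(0,t-a_j)-\max(0,t-b_j)$, which is a ReLU network of constant depth and $O(r)$ weights, bounded by $\max(|a_j|,|b_j|,1)\lesssim M$. Since $\bx\in\cS\subset\cQ$ and clipping is the projection onto the convex set $\cQ$, hence nonexpansive and fixing $\bx$, the error bound $\epsilon$ is preserved. The depth increases by a constant absorbed into $c$.

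Step 4 is where the $\lceil\log_2 r\rceil$ term in $L^*$ arises. It comes either from the cited theorem's internal product/summation structure when handled in ambient dimension $r$ (the chart projections and partition-of-unity products use binary trees of depth $\log_2$ of the ambient dimension), or from a tree-structured aggregation. I would track this inside the cited construction.

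The main obstacle is extracting from the literature result the precise dependence on $R$, $r$ and $\diam(\cS)$, and checking that the weight bound is polynomial in $\epsilon^{-1}$ with an exponent $p$ depending only on $\cS$ (through its reach and chart data). If the cited theorem is stated with $\mathcal O(\cdot)$ hiding $r$-dependence, I would redo its chart-wise Taylor/partition-of-unity construction. That construction has $R$ charts, each approximated with $\sim(\epsilon/\diam\cS)^{-d_{\cS}}$ weights, times $R^{d_{\cS}}$ from the chart count, and is linear in $r$ for the outputs. This yields the stated bound.
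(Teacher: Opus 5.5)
Your route is the paper's in outline, but Steps 1--2 as set up do not give the stated bounds. The paper rescales $\cS$ into $[0,1]^r$ and applies \cite[Theorem~3.9]{Petersen23} to the identity as a $1$-Lipschitz $\R^r$-valued map. That gives Euclidean accuracy, $rR^{d_{\cS}+1}(\epsilon/\diam(\cS))^{-d_{\cS}}$ weights and depth $\lceil\log_2 d_{\cS}\rceil+\lceil\log_2 r\rceil+c$ in one step. It then takes the weight size from \cite[Remark~3.12]{Petersen23} and scales back.

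The problems with your version are these.

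First, asking for accuracy $\epsilon/\sqrt r$ per coordinate is not free. Network size scales with the accuracy requested, so each coordinate costs $R^{d_{\cS}+1}(\sqrt r\,\diam(\cS)/\epsilon)^{d_{\cS}}$ weights. Stacking then gives $r^{1+d_{\cS}/2}R^{d_{\cS}+1}(\epsilon/\diam(\cS))^{-d_{\cS}}$. Likewise, if the scalar result bounds weights by a multiple of $(\text{accuracy})^{-\tilde p}$, the magnitude bound picks up $r^{\tilde p/2}$ rather than $\sqrt r$. Since the statement keeps the $r$-dependence explicit, this is not ``only constants''.

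Second, the Chen--Jiang--Liao--Zhao theorem has depth of order $\log(1/\epsilon)+\log r$, because its local Taylor/partition-of-unity construction needs approximate multiplications. So it cannot give the $\epsilon$-independent depth $L^*$. Your fallback of redoing that construction inherits the same defect. You would need a piecewise-linear (finite-element type) result like the one the paper cites, applied to the vector-valued identity.

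None of this machinery is actually needed, and your Step~3 shows why. The coordinate maps $\bx\mapsto x_j$ are linear, so they are represented exactly by $x_j=\max(0,x_j)-\max(0,-x_j)$. This is a depth-$2$ network with $4r$ nonzero weights, all equal to $\pm1$, and it is exactly the identity on $V$. Hence $\norm{\widehat{D}_\epsilon(\bx)-\bx}=0$ and $\widehat{D}_\epsilon(\cQ)=\cQ$. Your clipping map is likewise an exact representation of the identity on $\cQ\supseteq\cS$.

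The lemma only asks for upper bounds. With $R\ge 1$ and choosing $c\ge 2$, this network lies in $\cN\cN(W^*_\epsilon,L^*,\beta^*_\epsilon)$ as soon as $\epsilon\le\min(M,\diam(\cS))$. In particular, the constraint $D(\cQ)\subset\cQ$ from \eqref{eq:NN_feasible_set} holds automatically. You were right to flag that constraint, and the paper's own proof never checks it. So you can drop Steps~1, 2 and~4 entirely and keep a simplified Step~3.
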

	
	Let $\epsilon \in ]0,1[$. Consider the PnP-iteration with a neural network denoiser
	\begin{equation}\label{eq:pnpiterNN}
			\bx_{k+1} = \widehat{D}_\epsilon^N\pa{\bx_k + \gamma B(\by - A \bx_k)},
	\end{equation}
	where $\widehat{D}_\epsilon^N$ is a solution of \eqref{eq:loss} over $\cN\cN(W^*_\epsilon,L^*,\beta^*_\epsilon)$ with $W^*_\epsilon,L^*,\beta^*_\epsilon$ as in Lemma~\ref{lem:NN_existence_uniform}. We can then derive a pointwise recovery bound for the PnP-iteration \eqref{eq:pnpiterNN}.	
	\begin{theorem}\label{thm:recovery_pointwise_NN}
	Assume that \eqref{assum:Xcompact}--\eqref{assum:Xnoise} and \eqref{assum:ranB}--\eqref{assum:manifold} hold and that $\mu_{\bX}$ fulfills \eqref{assum:XV}. Consider the PnP iteration \eqref{eq:pnpiterNN}. If $L_{\widehat{D}_\epsilon^N} \le 1$ or otherwise $\kappa(BA) \le \pa{1-{L^{-2}_{\widehat{D}_\epsilon^N}}}^{-\frac{1}{2}}$, then the iterates of \eqref{eq:pnpiterNN} with $\gamma = \pa{\kappa(BA)\lambda_{\max}(BA)}^{-1} $ obey
		\begin{align}\label{eq:rcvbndNNptw1}
		\norm{\bx_k-\bx} &\lesssim q^{k+1}\norm{\bx_0 - \bx} \nonumber\\
			&+ \frac{1-q^{k}}{1-q}\Bigg(L_{\widehat{D}_\epsilon^N}\gamma\norm{B}\norm{\be} + \pa{\lip{\widehat{D}_\epsilon^N} +\lip{\widehat{D}_\epsilon}}\pa{1+\sqrt{\frac{4\log(N)}{Nr}}}\sqrt{\lambda_{\max}(\widetilde{\Sigma}_{\bUpsilon})Nr} \nonumber\\
			&+ \pa{1+\lip{\widehat{D}_\epsilon^N}}\dist(\bx,\cD_N^\bX) + \sqrt{N}\epsilon \Bigg) ,
		\end{align}
		with probability at least $1-N^{-2}$ over the sampling of $\cD_N^{\bUpsilon}$, where $q = L_{\widehat{D}_\epsilon^N} \sqrt{1-\kappa(BA)^{-2}}\in [0,1[$, and  $r = \dim V$.

		Moreover, if $\mu_\bX$ has a density with respect to the volume measure on $V$ which is bounded away from zero, then for $N$ large enough, it holds with probability at least $1 - (K_2+1) N^{-2}$ over the sampling of $\cD_N^\bX$ and $\cD_N^{\bUpsilon}$ that
		\begin{align}\label{eq:rcvbndNNptw2}
			\norm{\bx_k-\bx} &\lesssim q^{k+1}\norm{\bx_0 - \bx} \notag\\
			&+ \frac{1-q^{k}}{1-q}\Bigg(L_{\widehat{D}_\epsilon^N}\gamma\norm{B}\norm{\be} + \pa{\lip{\widehat{D}_\epsilon^N} +\lip{\widehat{D}_\epsilon}}\pa{1+\sqrt{\frac{4\log(N)}{Nr}}}\sqrt{\lambda_{\max}(\widetilde{\Sigma}_{\bUpsilon})Nr} \nonumber\\
			&+\pa{1+L_{\widehat{D}_\epsilon^N}}K_1 3^{1/r}\pa{\frac{\log(N)}{N}}^{1/r} + \sqrt{N}\epsilon \Bigg) ,
		\end{align}
		 where $K_1, K_2 > 0$ are constants that depend only on $r$ and $M$.
	\end{theorem}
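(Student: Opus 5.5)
The argument follows the pointwise MMSE analysis of Theorem~\ref{thm:pointwiserecovbndpnpfbs}, with $\widehat{D}_{\mu_\bX^N}$ replaced by the trained network $\widehat{D}_\epsilon^N$. Since $\widehat{D}_\epsilon^N$ is a ReLU network, it is Lipschitz with constant $\lip{\widehat{D}_\epsilon^N}$. Expanding the iteration as in \eqref{eq:bndpnpfbs_1} gives
\[
\norm{\bx_{k+1}-\bx} \le \lip{\widehat{D}_\epsilon^N}\norm{(I_n-\gamma BA)(\bx_k-\bx)} + \lip{\widehat{D}_\epsilon^N}\gamma\norm{B}\norm{\be} + \norm{\widehat{D}_\epsilon^N(\bx)-\bx}.
\]

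\textbf{Contraction.} The first term is handled exactly as in the MMSE case. Using \eqref{assum:BA}--\eqref{assum:restinj}, $\bx_k-\bx$ lies in the tangent cone $T_{\conv{\supp(\mu_\bX)}}(\bx)$ whenever the iterates stay in $\conv{\supp(\mu_\bX)}$. Here that invariance is guaranteed by $D(\cQ)\subset\cQ$, or else one can work directly with the cone estimate. With $\gamma=(\kappa(BA)\lambda_{\max}(BA))^{-1}$ we get
\[
\norm{(I_n-\gamma BA)\bz}^2\le (1-\kappa(BA)^{-2})\norm{\bz}^2 .
\]
Multiplying by $\lip{\widehat{D}_\epsilon^N}$ gives the factor $q$. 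The hypothesis on $\kappa(BA)$ versus $\lip{\widehat{D}_\epsilon^N}$ ensures $q<1$. Unrolling the recursion then produces the geometric factors $q^{k+1}$ and $(1-q^k)/(1-q)$.

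\textbf{Denoiser error.} The bulk of the work is bounding $\norm{\widehat{D}_\epsilon^N(\bx)-\bx}$. Let $\bx^{i}$ be the nearest sample to $\bx$ in $\cD_N^\bX$ and $\bupsilon^i$ its noise. Then
\[
\norm{\widehat{D}_\epsilon^N(\bx)-\bx}\le \norm{\widehat{D}_\epsilon^N(\bx)-\widehat{D}_\epsilon^N(\bx^i+\bupsilon^i)} + \norm{\widehat{D}_\epsilon^N(\bx^i+\bupsilon^i)-\bx^i} + \norm{\bx^i-\bx}.
\]
The first term is at most $\lip{\widehat{D}_\epsilon^N}(\dist(\bx,\cD_N^\bX)+\norm{\bupsilon^i})$. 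For the middle term, bound the single summand by the whole training loss. Optimality of $\widehat{D}_\epsilon^N$ in \eqref{eq:loss} and feasibility of the network $\widehat{D}_\epsilon$ from Lemma~\ref{lem:NN_existence_uniform} give
\[
\Big(\sum_j\norm{\widehat{D}_\epsilon^N(\bx^j+\bupsilon^j)-\bx^j}^2\Big)^{1/2}\le \Big(\sum_j\norm{\widehat{D}_\epsilon(\bx^j+\bupsilon^j)-\bx^j}^2\Big)^{1/2}.
\]
Each summand on the right is at most $\lip{\widehat{D}_\epsilon}\norm{\bupsilon^j}+\epsilon$, so the right-hand side is at most $\lip{\widehat{D}_\epsilon}\big(\sum_j\norm{\bupsilon^j}^2\big)^{1/2}+\sqrt N\epsilon$. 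The Lipschitz noise contributions from both networks are collected into the factor $(\lip{\widehat{D}_\epsilon^N}+\lip{\widehat{D}_\epsilon})$ multiplying $\big(\sum_j\norm{\bupsilon^j}^2\big)^{1/2}$, which also dominates $\norm{\bupsilon^i}$.

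\textbf{Gaussian concentration.} Write $\sum_j\norm{\bupsilon^j}^2\le\lambda_{\max}(\widetilde{\Sigma}_{\bUpsilon})\,\chi^2_{Nr}$. The Laurent--Massart bound gives, with probability at least $1-N^{-2}$,
\[
\chi^2_{Nr}\le Nr+2\sqrt{Nr\,t}+2t, \qquad t=2\log N .
\]
Taking square roots yields $\sqrt{\lambda_{\max}(\widetilde{\Sigma}_{\bUpsilon})Nr}\,\big(1+\sqrt{4\log N/(Nr)}\big)$ up to constants, which establishes \eqref{eq:rcvbndNNptw1}.

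\textbf{Sampling bound.} For \eqref{eq:rcvbndNNptw2}, replace $\dist(\bx,\cD_N^\bX)$ by the high-probability nearest-neighbor bound $K_1 3^{1/r}(\log N/N)^{1/r}$ already used in Theorem~\ref{thm:pointwiserecovbndpnpfbs}. That bound holds with probability $1-K_2N^{-2}$, and a union bound with the concentration event gives the stated probability.

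The main obstacle is the contraction step, specifically justifying that $\bx_k-\bx$ lies in the tangent cone, which the cone estimate needs. For the MMSE denoiser this followed from Remark~\ref{rem:mmseconvX}. For the network I would only have $D(\cQ)\subset\cQ$, so I would first try to argue invariance of $\cQ$ and use the cone estimate there. Failing that, I would accept the estimate under the same standing convention used in Theorem~\ref{thm:pointwiserecovbndpnpfbs}.
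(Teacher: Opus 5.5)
Your treatment of the denoiser error and of the probabilities follows the paper's proof. It uses the nearest training sample $\bx^i$, optimality of $\widehat{D}_\epsilon^N$ in \eqref{eq:loss} against the feasible network $\widehat{D}_\epsilon$ of Lemma~\ref{lem:NN_existence_uniform}, Gaussian concentration, and a union bound with Lemma~\ref{lem:bnddistxD_N} at $\tau=2$. Two details are slightly better than the paper's: Minkowski's inequality avoids its factor $\sqrt2$, and Laurent--Massart with $t=2\log N$ gives exactly $\sqrt{Nr}+2\sqrt{\log N}$ (as does the paper's Lipschitz concentration with $t=\sqrt{4\log N}$). The gaps are in the contraction step. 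The paper only says ``analogously to the proof of Theorem~\ref{thm:pointwiserecovbndpnpfbs}'' there, so it does not close them either. Write $T$ for $T_{\conv{\supp(\mu_\bX)}}(\bx)$.

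\textbf{Cone membership.} The obstacle you flag is real, and neither of your two routes removes it.
\begin{itemize}
\item Invariance of $\cQ$ does not help. The constraint $D(\cQ)\subset\cQ$ says nothing about inputs outside $\cQ$, and $\bx_k+\gamma B(\by-A\bx_k)$ need not lie in $\cQ$. Even $\bx_{k+1}\in\cQ$ would not give $\bx_{k+1}-\bx\in T$, since the cube is in general much larger than $\conv{\supp(\mu_\bX)}$.
\item There is no standing convention in Theorem~\ref{thm:pointwiserecovbndpnpfbs} to borrow. There, $\bx_k\in\conv{\supp(\mu_\bX)}$ follows from Remark~\ref{rem:mmseconvX}: the empirical MMSE maps all of $V$ into that hull. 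A ReLU network does not have this property.
\end{itemize}
What you do have is $\bx_k-\bx\in V$, because the network maps $V$ into $V$, $\bx\in V$, and $\bx_0\in V$. You also have $BA$-invariance of $V$, since $BA$ is symmetric with $\ran(BA)\subset\ran(B)\subset V$. A clean repair is therefore to assume $BA$ positive definite on $V$; this is automatic when $\ran(B)=V=\ker(BA)^\perp$, as for $B=A^\top C$. Then define $\kappa(BA)$ with $\lambda_{\min}(BA|_V)\le\lambda_{\min}(BA;T)$. The spectral argument on $V$ gives $\norm{(I_n-\gamma BA)\bz}\le(1-\kappa(BA)^{-2})\norm{\bz}\le\sqrt{1-\kappa(BA)^{-2}}\,\norm{\bz}$ for all $\bz\in V$ at $\gamma=(\kappa(BA)\lambda_{\max}(BA))^{-1}$. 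The price is a larger $\kappa(BA)$.

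\textbf{The contraction inequality itself.} Even for $\bz\in T$, your bound $\norm{(I_n-\gamma BA)\bz}^2\le(1-\kappa(BA)^{-2})\norm{\bz}^2$ at $\gamma=(\kappa(BA)\lambda_{\max}(BA))^{-1}$ does not follow from the conical singular value. Expanding the square needs $\dotp{BA\bz}{\bz}\ge\lambda_{\min}(BA;T)\norm{\bz}^2$, which fails when $T$ is not $BA$-invariant.

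Counterexample: take $BA=\mathrm{diag}(1,0)$ and $T=\R_+(1,1)^\top$, the tangent cone of a segment at its endpoint. Then $\lambda_{\min}(BA;T)=1/\sqrt2$, $\kappa=\kappa(BA)=\sqrt2$ and $\gamma=1/\sqrt2$. For $\bz=(1,1)^\top$ one gets $\norm{(I_n-\gamma BA)\bz}^2/\norm{\bz}^2=1-2\kappa^{-3}+\kappa^{-4}\approx0.543>1/2$.

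What is true is $\dotp{BA\bz}{\bz}\ge\norm{BA\bz}^2/\lambda_{\max}(BA)$. Hence, for $0<\gamma\le2/\lambda_{\max}(BA)$,
$\norm{(I_n-\gamma BA)\bz}^2\le(1-\gamma(2\lambda_{\max}(BA)^{-1}-\gamma)\lambda_{\min}(BA;T)^2)\norm{\bz}^2$.
This yields $1-\kappa^{-2}$ at $\gamma=\lambda_{\max}(BA)^{-1}$, but only $1-2\kappa^{-3}+\kappa^{-4}$ at the stated step size. So either the step size or $q$ must change, unless you use the invariant-subspace repair above. This flaw is inherited from the proof of Theorem~\ref{thm:pointwiserecovbndpnpfbs}.

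Two small points as well. The non-strict hypothesis $\kappa(BA)\le(1-L^{-2}_{\widehat{D}_\epsilon^N})^{-1/2}$ only gives $q\le1$, not $q<1$. And unrolling $a_k\le qa_{k-1}+c$ produces $q^k\norm{\bx_0-\bx}$, not $q^{k+1}$.
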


	\begin{remark}
		One can sharpen the recovery bounds in Theorem~\ref{thm:recovery_pointwise_NN} by estimating the Lipschitz constants of $\widehat{D}_\epsilon$ and $\widehat{D}_\epsilon^N$. We therefore use a coarse estimate for ReLU networks. Let $s_l$ be the number of nonzero entries of $A_l$. Since the entries of $A_l$ are bounded by $\beta$, we have $\|A_l\|\le \beta s_l^{1/2}$. Hence, we can use the standard bound \begin{equation}\label{eq:Lipschitz_bound_NN}
			L_D \le \prod\limits_{l =1}^L \norm{A_l} \le \prod\limits_{l=1}^L \beta s_l^{1/2}\le \beta^L \pa{\frac{W}{L}}^\frac{L}{2}.
		\end{equation} 
		Particularizing to the networks considered in Theorem~\ref{thm:recovery_pointwise_NN}, one obtains \begin{equation}\label{eq:Lipschitz_NN_epsilon}
			\max\left\{ L_{\widehat{D}_\epsilon} ,L_{\widehat{D}_\epsilon^N}\right\} \lesssim \pa{\frac{\epsilon}{M}}^{-\rho_1}r^{\rho_2}, 
		\end{equation} 
		with     
		\begin{align*}
			&\rho_1 = \pa{p+\frac{d_{\cS}}{2}}\pa{\lceil \log_2(d_{\cS}) \rceil + \lceil \log_2(r) \rceil + c},\\
			& \rho_2 =  {\lceil \log_2(d_{\cS}) \rceil + \lceil \log_2(r) \rceil + c},
		\end{align*}
		from the asymptotic bounds of the network's hyperparameters. However, for practical application, this bound could be drastically improved. In view of the Lipschitz continuity and cocoercivity - see Proposition~\ref{prop:cocoercive} - of the MMSE denoiser, one could include such constraints in the training of the network. 
	\end{remark}

	\section{Properties of the MMSE denoiser}\label{sec:MMSE_properties}
	
	We show several properties of the MMSE denoiser \eqref{eq:dmmse1} which are the essential tools for proving the main results in Section~\ref{sec:proofs}.
	
	\subsection{Lipschitz continuity }
	Lipschitz stability of the MMSE denoiser is a consequence of the following proposition and the mean value theorem.
	\begin{proposition}\label{prop:jacDmu}
		Assume that \eqref{assum:Xcompact} and \eqref{assum:Xnoise} hold.  Let $\mu \in \cP(\cX)$ satisfy \eqref{assum:XV} then for every $\bz \in V$, the Jacobian is given by \begin{equation}\label{eq:Jacobian_MMSE}
			\jac{\widehat{D}_{\mu}}(\bz) = \Sigma_{\bX|\bz}\Pi\widetilde{\Sigma}_{\bUpsilon}^{-1}\Pi^\top
		\end{equation}
		where $\Sigma_{\bX|\bz}$ is the conditional variance-covariance matrix, i.e.,
		\[
		\Sigma_{\bX|\bz} = 
		\frac{\int_{\cX} \bx\bx^\top \varphi(\bz - \bx;\widetilde{\Sigma}_{\bUpsilon}) d\mu(\bx)}{p_\mu(\bz)} - \widehat{D}_{\mu}(\bz)\widehat{D}_{\mu}(\bz)^\top .
		\]
		Moreover, $\jac{\widehat{D}_{\mu}}(\bz)$ is diagonalizable, has nonnegative eigenvalues, and
		\[
		\sup_{\bz \in V} \norm{\jac{\widehat{D}_{\mu}}(\bz)} \leq \frac{M^2}{\lambda_{\min}(\widetilde{\Sigma}_{\bUpsilon})},
		\]
		where $M>0$ is an upper bound of $\norm{\bx}$ for all $\bx \in \cX$.
		\begin{proof}
			By \eqref{assum:Xcompact} and standard arguments (differentiability of the integrand and Lebesgue's dominated convergence theorem), one can interchange the differentiation and integration. Therefore, it holds
			\begin{align}
				\jac{\widehat{D}_{\mu}}(\bz) 
				&= \int_{\cX} \bx\nabla_{\bz}{\pa{\frac{\varphi(\bz - \bx;\widetilde{\Sigma}_{\bUpsilon})}{p_\mu(\bz)}}}^\top d\mu(\bx) \nonumber\\
				&= \frac{\int_{\cX} \bx\nabla_{\bz}{\varphi(\bz - \bx;\widetilde{\Sigma}_{\bUpsilon})}^\top d\mu(\bx)}{p_\mu(\bz)} - \frac{\int_{\cX} \bx\varphi(\bz - \bx;\widetilde{\Sigma}_{\bUpsilon})d\mu(\bx)}{p_\mu(\bz)} \frac{\nabla_\bz p_\mu(\bz)}{p_\mu(\bz)}^\top \nonumber\\
				&= \frac{\int_{\cX} \bx(\bx-\bz)^\top\Pi\widetilde{\Sigma}_{\bUpsilon}^{-1}\Pi^\top\varphi(\bz-\bx;\widetilde{\Sigma}_{\bUpsilon})d\mu(\bx)}{p_\mu(\bz)} - \widehat{D}_{\mu}(\bz)\nabla_\bz \log(p_\mu(\bz))^\top \nonumber\\
				&= \frac{\int_{\cX} \bx \bx^\top \varphi(\bz - \bx;\widetilde{\Sigma}_{\bUpsilon})d\mu(\bx)}{p_\mu(\bz)}\Pi\widetilde{\Sigma}_{\bUpsilon}^{-1}\Pi^\top - \widehat{D}_{\mu}(\bz) \bz^\top\Pi\widetilde{\Sigma}_{\bUpsilon}^{-1}\Pi^\top - \widehat{D}_{\mu}(\bz) \nabla_\bz \log(p_\mu(\bz))^\top \nonumber\\
				&= \frac{\int_{\cX} \bx \bx^\top \varphi(\bz - \bx;\widetilde{\Sigma}_{\bUpsilon})d\mu(\bx)}{p_\mu(\bz)}\Pi\widetilde{\Sigma}_{\bUpsilon}^{-1}\Pi^\top - \widehat{D}_{\mu}(\bz)\pa{\Pi\widetilde{\Sigma}_{\bUpsilon}^{-1}\Pi^\top\bz+\nabla_\bz \log(p_\mu(\bz))}^\top . \label{eq:jacproof}
			\end{align}
			With a similar derivation as above, one shows that
			\begin{equation}\label{eq:pretre}
				\nabla_\bz \log(p_\mu(\bz)) = \Pi\widetilde{\Sigma}_{\bUpsilon}^{-1}\Pi^\top\pa{\widehat{D}_{\mu}(\bz) - \bz} .
			\end{equation}
			Inserting this into \eqref{eq:jacproof}, we get \eqref{eq:Jacobian_MMSE}. \\
			The fact that $J_{\widehat{D}_{\mu}}(\bz)$ is diagonalizable with nonnegative eigenvalues follows from \cite[Corollary~7.6.2]{Horn13}. In addition, for any $\bz \in V$, it holds
			\[
			\norm{\jac{\widehat{D}_{\mu}}(\bz)} \leq \norm{\Pi\widetilde{\Sigma}_{\bUpsilon}^{-1}\Pi^\top}\norm{\Sigma_{\bX|\bz}} = \lambda_{\min}(\widetilde{\Sigma}_{\bUpsilon})^{-1}\norm{\Sigma_{\bX|\bz}} .
			\]
			Let $\bs$ as an arbitrary vector on the unit sphere $\bbS^{r-1}$ of $V$. We have
			\[
			0 \leq \dotp{\Sigma_{\bX|\bz}\bs}{\bs} 
			\leq \frac{\int_{\cX} |\dotp{\bx}{\bs}|^2 \varphi(\bz - \bx;\widetilde{\Sigma}_{\bUpsilon}) d\mu(\bx)}{p_\mu(\bz)} \leq \frac{\int_{\cX} \norm{\bx}^2 \varphi(\bz - \bx;\widetilde{\Sigma}_{\bUpsilon}) d\mu(\bx)}{p_\mu(\bz)} \leq M^2 .
			\]
			Taking the supremum over $\bs$ and $\bz$ concludes the proof.
		\end{proof}
	\end{proposition}

	\begin{remark}\label{rem:tde} \, \begin{enumerate}[label = (\roman*)]
			\item The identity \eqref{eq:Jacobian_MMSE} has a long history and has had several applications in information and estimation theory. When $\Sigma_{\bUpsilon}$ is positive definite, i.e., $V=\bbR^n$, this identity was proved in \cite{Guo05,Palomar06}; see also the general version that appeared recently in \cite{Dytso20}. It was used in \cite{Guo11} to study certain regularity, monotonicity, and convexity properties of the MMSE risk as a function of the SNR. The special case $\Sigma_{\bUpsilon} = I_n$ is due to \cite{Hatsell71}, while the scalar case was considered in \cite{Wu12} to establish some regularity properties of the MMSE risk and mutual information. 
			\item Since $\Pi^\top\Pi=I_r$, $\Pi\Pi^\top$ is the orthogonal projector on $V$, the identity \eqref{eq:pretre} is equivalent to
			\begin{equation}\label{eq:tre}
				\widehat{D}_{\mu}(\bz) = \bz + \Pi\widetilde{\Sigma}_{\bUpsilon}\Pi^\top \nabla \log(p_\mu(\bz)) ,
			\end{equation}
			due to $\bz,\widehat{D}_{\mu}(\bz),\nabla_\bz \log(p_\mu)(\bz)\in V$. Note that the term $\nabla \log(p_\mu(\bz))$ is commonly known as the score function. Further, the identity \eqref{eq:tre} is reminiscent of Tweedie's formula \eqref{eq:Tweedie}.
		\end{enumerate}
		
	\end{remark}
	
	As an immediate consequence of Proposition~\ref{prop:jacDmu}, we obtain the following Lemma. 
	\begin{lemma}\label{lem:Dmulip}
		Assume that \eqref{assum:Xcompact} and \eqref{assum:Xnoise} hold and denote $M>0$ an upper bound of $\norm{\bx}$ for $\bx \in \cX$. For any measure $\mu \in \cP(\cX)$ satisfying \eqref{assum:XV}, $\widehat{D}_{\mu}$ is $\lip{\widehat{D}_{\mu}}$-Lipschitz continuous on $V$ with $\lip{\widehat{D}_{\mu}} \leq \frac{M^2}{\lambda_{\min}(\widetilde{\Sigma}_{\bUpsilon})}$. 
	\end{lemma}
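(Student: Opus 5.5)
The plan is to deduce the Lipschitz bound from the uniform Jacobian bound in Proposition~\ref{prop:jacDmu} via the mean value inequality on the linear subspace $V$. Fix $\mu \in \cP(\cX)$ satisfying \eqref{assum:XV}. By \eqref{eq:Dmmsemu}, $\widehat{D}_{\mu}$ maps $V$ into $V$. It is continuously differentiable on $V$: as noted in the proof of Proposition~\ref{prop:jacDmu}, compactness of $\cX$ (Assumption~\eqref{assum:Xcompact}) and dominated convergence allow differentiation under the integral sign. Moreover $p_\mu(\bz)>0$ for every $\bz\in V$, since $\varphi$ is strictly positive on $V$ and $\mu$ is supported in $V$. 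Hence the quotient in \eqref{eq:Dmmsemu} is smooth on $V$.

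Next take $\bz,\bz' \in V$. Since $V$ is a linear subspace, the whole segment $\bz_t = \bz' + t(\bz-\bz')$, $t\in[0,1]$, lies in $V$. The fundamental theorem of calculus along the segment gives
\[
\widehat{D}_{\mu}(\bz) - \widehat{D}_{\mu}(\bz') = \int_0^1 \jac{\widehat{D}_{\mu}}(\bz_t)(\bz-\bz')\,dt .
\]
Taking norms and using $\norm{\int f}\le\int\norm{f}$ yields
\[
\norm{\widehat{D}_{\mu}(\bz) - \widehat{D}_{\mu}(\bz')} \le \sup_{\bw\in V}\norm{\jac{\widehat{D}_{\mu}}(\bw)}\,\norm{\bz-\bz'} \le \frac{M^2}{\lambda_{\min}(\widetilde{\Sigma}_{\bUpsilon})}\norm{\bz-\bz'},
\]
where the last inequality is the bound in Proposition~\ref{prop:jacDmu}. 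This gives the claimed bound on $\lip{\widehat{D}_{\mu}}$.

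There is no genuine obstacle. The only point needing care is that differentiability and the Jacobian are understood along $V$, since $\varphi$ carries the indicator $\mathds{1}_V$. Restricting to $V$ and using that segments between points of $V$ stay in $V$ handles this. Note also that the operator norm of $\Pi\widetilde{\Sigma}_{\bUpsilon}^{-1}\Pi^\top$ equals $\lambda_{\min}(\widetilde{\Sigma}_{\bUpsilon})^{-1}$, which Proposition~\ref{prop:jacDmu} already uses.
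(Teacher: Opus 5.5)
Your proposal is correct and follows the same route as the paper. The paper states this lemma as an immediate consequence of the uniform Jacobian bound in Proposition~\ref{prop:jacDmu} together with the mean value theorem, which is exactly your integration of the Jacobian along segments in $V$; your remarks on positivity of $p_\mu$ and on segments staying in $V$ make explicit details the paper leaves implicit.
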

	The bound for the Lipschitz constant in Lemma~\ref{lem:Dmulip} may be improved on a case by case basis. Further assumption \eqref{assum:Xcompact} and thus the existence of an upper bound $M$ is sufficient but not necessary for the Lipschitz continuity of $\widehat{D}_{\mu}$. For example, if $\bX$ is a Gaussian random vector with covariance matrix $\Sigma_{\bX}$, it holds $\widehat{D}_{\mu_\bX}(\bz) = \Sigma_\bX(\Sigma_\bX+\Sigma_{\bUpsilon})^+\bz$. Consequently $\widehat{D}_{\mu_\bX}$ is Lipschitz continuous with Lipschitz constant $\norm{\Sigma_\bX(\Sigma_\bX+\Sigma_{\bUpsilon})^+}$. For the empirical MMSE denoiser $\widehat{D}_{\mu_\bX^N}$ the Lipschitz constant can be bounded using concetration inequalities. We leave the details to a future work. 
	
	The main advantage of the above Lipschitz bound, however, is its independence of the measure $\mu$. Moreover, this bound can be interpreted as an upper-bound on the signal-to-noise ratio (SNR). It also reflects that this constant degrades as the noise decreases. An explanation for this behavior can be seen in Proposition~\ref{prop:limsigma}, which shows that $\widehat{D}_{\mu}$ can become set-valued as $\Sigma_{\bUpsilon} \to 0$.
	
	\subsection{Stability with respect to the prior measure}
	\begin{theorem}\label{thm:lipDmmseW2}
		Assume that \eqref{assum:Xcompact} and \eqref{assum:Xnoise} hold and that the measures $\mu, \nu \in \cP(\cX)$ satisfy \eqref{assum:XV}. Let $\bUpsilon$ as in \eqref{assum:Xnoise}, and $\bU \sim \mu$, $\bV \sim \nu$ independent of $\bUpsilon$ . Denote $\widehat{\mu}_{\mmse}$ (resp. $\widehat{\nu}_{\mmse}$) the law of $\widehat{D}_{\mu}(\bU+\bUpsilon)$ (resp. $\widehat{D}_{\nu}(\bV+\bUpsilon)$). Then the following estimates hold.
		\begin{enumerate}[label=(\roman*)]
			\item \label{thm:lipDmmseW2_claim1}
			$
			\norm{\Expect{}{\widehat{D}_{\mu}(\bU+\bUpsilon)} - \Expect{}{\widehat{D}_{\nu}(\bV+\bUpsilon)}} \leq \cW_1(\mu,\nu) .
			$
			
			\item \label{thm:lipDmmseW2_claim2}
			$
			\abs{\cR(\widehat{D}_{\mu}) - \cR(\widehat{D}_{\nu})} \leq 2 \sqrt{\lambda_{\max}(\widetilde{\Sigma}_{\bUpsilon}) r} \pa{1+\frac{M^2}{\lambda_{\min}(\widetilde{\Sigma}_{\bUpsilon})}} \cW_2(\mu,\nu)
			$, where $\cR$ is as in \eqref{eq:mmse_objective} and ${r=\rank\pa{\Sigma_\Upsilon}}$.
			
			\item \label{thm:lipDmmseW2_claim3}
			For $p \in [1,2]$ it holds
			\[
			\cW_p(\widehat{\mu}_{\mmse},\widehat{\nu}_{\mmse}) \leq 2\sqrt{\lambda_{\max}(\widetilde{\Sigma}_{\bUpsilon}) r} + \cW_p(\mu,\nu) .
			\]
		\end{enumerate}
		\begin{proof} \,
			\begin{enumerate}[label=(\roman*)]
				\item By assumption \eqref{assum:Xnoise}, the densities of $\bU + \bUpsilon$ and $ \bV+\bUpsilon$ with respect to the $r$-dimensional Hausdorff measure are given by \begin{equation*}
					\int_{\cX}\phi(\bz-\bu) { d\mu(\bu)} \,\text{ and }    \int_{\cX}\phi(\bz-\bv) {d\nu(\bv)},
				\end{equation*}
				respectively. Thus, we have
				\begin{align}\label{eq:difEDmueq}
					&\Expect{}{\widehat{D}_{\mu}(\bU+\bUpsilon)} - \Expect{}{\widehat{D}_{\nu}(\bV+\bUpsilon)} \nonumber\\
					&= (2\pi)^{-r/2}\det(\widetilde{\Sigma}_{\bUpsilon})^{-1/2}\pa{\int_{V}\int_{\cX} \bu \varphi(\bz - \bu;\widetilde{\Sigma}_{\bUpsilon}) d\mu(\bu)d\cH^r(\bz) - \int_{V}\int_{\cX} \bv \varphi(\bz' - \bv;\widetilde{\Sigma}_{\bUpsilon}) d\nu(\bv)d\cH^r(\bz)} \nonumber\\
					&= (2\pi)^{-r/2}\det(\widetilde{\Sigma}_{\bUpsilon})^{-1/2}\pa{\int_{\cX} \int_{V} \bu \varphi(\bz - \bu;\widetilde{\Sigma}_{\bUpsilon}) d\cH^r(\bz)d\mu(\bu) - \int_{\cX} \int_{V} \bv \varphi(\bz' - \bv;\widetilde{\Sigma}_{\bUpsilon}) d\cH^r(\bz')d\nu(\bv)} \nonumber\\
					&= \int_{\cX} \bu d\mu(\bu) - \int_{\cX} \bv d\nu(\bv) , 
				\end{align}
				where, in the second equality, we used \eqref{assum:Xcompact} and Fubini's theorem, \eqref{assum:XV} and standard integration results for Gaussian distributions. Fix $\bs$ as an arbitrary vector on the unit sphere $\bbS^{r-1}$ of $V$. We then have for all $\bu, \bv \in \cX$
				\begin{align*}
					\abs{\dotp{\bs}{\bu} - \dotp{\bs}{\bv}} \leq \norm{\bu-\bv} .
				\end{align*}
				Consequently, taking the inner product with $\bs \in \bbS^{r-1}$ in \eqref{eq:difEDmueq}, we get for any coupling $\pi$ of $\bU$ and $\bV$ that
				\begin{align*}
					\norm{\Expect{}{\widehat{D}_{\mu}(\bU+\bUpsilon)} - \Expect{}{\widehat{D}_{\nu}(\bV+\bUpsilon)}} 
					&= \sup_{\bs \in \bbS^{r-1}} \abs{\dotp{\bs}{\Expect{}{\widehat{D}_{\mu}(\bU+\bUpsilon)} - \Expect{}{\widehat{D}_{\nu}(\bV+\bUpsilon)}}} \\
					&= \sup_{\bs \in \bbS^{r-1}} \abs{\int_{\cX} \dotp{\bs}{\bu} d\mu(\bu) - \int_{\cX} \dotp{\bs}{\bv} d\nu(\bv)} \\
					&= \sup_{\bs \in \bbS^{r-1}} \abs{\int_{\cX}\dotp{\bs}{\bu-\bv}d\pi(\bu,\bv)}\\
					&\le \int_{\cX} \norm{\bu-\bv}d\pi(\bu,\bv) .
				\end{align*}
				Taking the infimum with respect to the coupling yields the claim.
					%
					%
				
				\item Let $\bU$ and $\bV$ be jointly distributed according to their optimal coupling so that
				\[
				\cW_2(\mu,\nu) = \Expect{}{\norm{\bU - \bV}^2}^{1/2} .
				\]
				We have from the optimality of the MMSE estimator (see \eqref{eq:mmse}) that
				\begin{align*}
					\sqrt{\cR(\widehat{D}_{\mu})} 
					&\leq \Expect{}{\norm{\widehat{D}_{\nu}(\bU+\bUpsilon)-\bU}^2}^{1/2} \\
					&\leq \Expect{}{\norm{\widehat{D}_{\nu}(\bV+\bUpsilon)-\bV}^2}^{1/2} + \Expect{}{\norm{\widehat{D}_{\nu}(\bU+\bUpsilon)-\widehat{D}_{\nu}(\bV+\bUpsilon)}^2}^{1/2} + \Expect{}{\norm{\bU-\bV}^2}^{1/2} \\
					&= \sqrt{\cR(\widehat{D}_{\nu})} + \Expect{}{\norm{\widehat{D}_{\nu}(\bU+\bUpsilon)-\widehat{D}_{\nu}(\bV+\bUpsilon)}^2}^{1/2} + \cW_2(\mu,\nu) .
				\end{align*}
				For the second term, we use the mean value theorem and Lemma~\ref{lem:Dmulip} to get that
				\[
				\Expect{}{\norm{\widehat{D}_{\nu}(\bU+\bUpsilon) - \widehat{D}_{\nu}(\bV+\bUpsilon)}^2}^{1/2} 
				\leq \frac{M^2}{\lambda_{\min}(\widetilde{\Sigma}_{\bUpsilon})} \Expect{}{\norm{\bU - \bV}^2}^{1/2}
				= \frac{M^2}{\lambda_{\min}(\widetilde{\Sigma}_{\bUpsilon})} \cW_2(\mu,\nu) .
				\]
				Reverting the role of $\mu$ and $\nu$, we have the same bound. We therefore get
				\[
				\abs{\sqrt{\cR(\widehat{D}_{\mu})}  - \sqrt{\cR(\widehat{D}_{\nu})}} \leq \pa{1+\frac{M^2}{\lambda_{\min}(\widetilde{\Sigma}_{\bUpsilon})}}\cW_2(\mu,\nu) .
				\]
				Observe now that from the optimality of the MMSE estimator again, we have
				\begin{equation}\label{eq:mmseoptbnd}
					\cR(\widehat{D}_{\mu}) \leq \Expect{}{\norm{(\bU+\bUpsilon)-\bU}^2}
					= \Expect{}{\norm{\bUpsilon}^2} = \Expect{}{\norm{\Pi^\top\bUpsilon}^2} = \tr{\widetilde{\Sigma}_{\bUpsilon}}
					\leq \lambda_{\max}(\widetilde{\Sigma}_{\bUpsilon}) r ,
				\end{equation}
				and similarly for $\cR(\widehat{D}_{\nu})$. It then follows that
				\begin{align*}
					\abs{\cR(\widehat{D}_{\mu}) - \cR(\widehat{D}_{\nu})} 
					&= \pa{\sqrt{\cR(\widehat{D}_{\mu})}  + \sqrt{\cR(\widehat{D}_{\nu})}}\abs{\sqrt{\cR(\widehat{D}_{\mu})}  - \sqrt{\cR(\widehat{D}_{\nu})}} \\
					&\leq 2 \sqrt{\lambda_{\max}(\widetilde{\Sigma}_{\bUpsilon}) r} \pa{1+\frac{M^2}{\lambda_{\min}(\widetilde{\Sigma}_{\bUpsilon})}} \cW_2(\mu,\nu)
				\end{align*}
				as claimed.
				
				\item The triangle inequality yields
				\begin{equation}\label{eq:triangleWpmmse}
					\cW_p(\widehat{\mu}_{\mmse},\widehat{\nu}_{\mmse}) \leq \cW_p(\widehat{\mu}_{\mmse},\mu) + \cW_p(\widehat{\nu}_{\mmse},\nu) + \cW_p(\mu,\nu) .
				\end{equation}
				We have the bound 
				\begin{align*}
					\cW_p(\widehat{\mu}_{\mmse},\mu)^2 
					\leq \cW_2(\widehat{\mu}_{\mmse},\mu)^2
					&\leq \Expect{}{\norm{\widehat{D}_{\mu}(\bU+\bUpsilon)-\bU}^2} \\
					&= \cR(\widehat{D}_{\mu}) \leq \Expect{}{\norm{\bUpsilon}^2} \leq \lambda_{\max}(\widetilde{\Sigma}_{\bUpsilon})r .
				\end{align*}
				In the first inequality, we used \eqref{eq:Wporder}, in the second one we used optimality in the definition of the Wasserstein distance, and the third inequality follows from \eqref{eq:mmseoptbnd}. The same bound holds for $\cW_p(\widehat{\nu}_{\mmse},\nu)$. Inserting this into \eqref{eq:triangleWpmmse} proves the result.
			\end{enumerate}
		\end{proof}
	\end{theorem}
	
	\begin{remark}
		The special scalar case of Theorem~\ref{thm:lipDmmseW2}\ref{thm:lipDmmseW2_claim2} was considered in \cite[Theorem~5]{Wu12}. Moreover, the constant $M^2/\lambda_{\min}(\widetilde{\Sigma}_{\bUpsilon})$ in Theorem~\ref{thm:lipDmmseW2}\ref{thm:lipDmmseW2_claim2} can be replaced by $\max(\lip{\widehat{D}_{\mu}},\lip{\widehat{D}_{\nu}})$. 
	\end{remark}
	
	A useful application of this result is when $\nu$ is the empirical version of $\mu$ as discussed in Section~\ref{sec:mmsedenoiser}; see \eqref{eq:dmmsegauss} and \eqref{eq:Ndmmsegauss}.

Let $\mu^N$ be the empirical measure obtained from $N$ independent samples from $\mu$. Let $\widehat{\mu}^N_{\mmse}$ be the law of $\widehat{D}_{\mu^N}(\bV+\bUpsilon)$, where $\bV \sim \mu^N$. We have the following corollary.
\begin{corollary}\label{cor:lipDmmseempiricalW2}
Assume that \eqref{assum:Xcompact} and \eqref{assum:Xnoise} hold and that $\mu \in \cP(\cX)$ satisfies \eqref{assum:XV}.
\begin{enumerate}[label=(\roman*)]
\item \label{cor:lipDmmseempiricalW2_claim1}
If $s > d_{\mathrm{M}}(\mu) \geq 2$, then
\[
\Expect{}{\norm{\Expect{}{\widehat{D}_{\mu}(\bU+\bUpsilon)} - \Expect{}{\widehat{D}_{\mu^N}(\bV+\bUpsilon)}}} \lesssim \diam(\cX) N^{-1/s} .
\]

\item \label{cor:lipDmmseempiricalW2_claim2}
If $s > d_{\mathrm{M}}(\mu) \geq 4$, then
\[
\Expect{}{\abs{\cR(\widehat{D}_{\mu}) - \cR(\widehat{D}_{\mu^N})}} \lesssim \sqrt{\lambda_{\max}(\widetilde{\Sigma}_{\bUpsilon}) r} \pa{1+\frac{M^2}{\lambda_{\min}(\widetilde{\Sigma}_{\bUpsilon})}} \diam(\cX) N^{-1/s} .
\]

\item \label{cor:lipDmmseempiricalW2_claim3}
For $p \in [1,2]$, if $s > d_{\mathrm{M}}(\mu) \geq 2p$, then
\[
\Expect{}{W_p(\widehat{\mu}_{\mmse},\widehat{\mu}^N_{\mmse})} \lesssim \sqrt{\lambda_{\max}(\widetilde{\Sigma}_{\bUpsilon}) r} + \diam(\cX) N^{-1/s} .
\]
\end{enumerate}
The (outer) expectation is with respect to the sampling of the atoms in $\mu^N$. 
\end{corollary}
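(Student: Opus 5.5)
The corollary follows by applying Theorem~\ref{thm:lipDmmseW2} with $\nu = \mu^N$ and then taking expectations over the sampling of the atoms. We also need a known rate for the expected Wasserstein distance between a measure and its empirical version. First observe that $\mu^N$ satisfies \eqref{assum:XV} almost surely: its atoms lie in $\supp(\mu)$, so $\conv{\supp(\mu^N)} \subset \conv{\supp(\mu)} \subset V$. Hence Theorem~\ref{thm:lipDmmseW2} applies for every realization of the sample, with the deterministic constants appearing there.

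Taking expectations in each claim of Theorem~\ref{thm:lipDmmseW2} gives:
\begin{itemize}
\item[] \ref{cor:lipDmmseempiricalW2_claim1}: $\Expect{}{\norm{\cdots}} \le \Expect{}{\cW_1(\mu,\mu^N)}$;
\item[] \ref{cor:lipDmmseempiricalW2_claim2}: $\Expect{}{\abs{\cR(\widehat{D}_{\mu}) - \cR(\widehat{D}_{\mu^N})}} \le 2\sqrt{\lambda_{\max}(\widetilde{\Sigma}_{\bUpsilon}) r}\,\pa{1+M^2/\lambda_{\min}(\widetilde{\Sigma}_{\bUpsilon})}\,\Expect{}{\cW_2(\mu,\mu^N)}$;
\item[] \ref{cor:lipDmmseempiricalW2_claim3}: $\Expect{}{\cW_p(\widehat{\mu}_{\mmse},\widehat{\mu}^N_{\mmse})} \le 2\sqrt{\lambda_{\max}(\widetilde{\Sigma}_{\bUpsilon}) r} + \Expect{}{\cW_p(\mu,\mu^N)}$.
\end{itemize}
In all three cases the problem reduces to bounding $\Expect{}{\cW_p(\mu,\mu^N)}$ for $p=1,2$ and $p\in[1,2]$.

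For that bound I would invoke the empirical convergence rate of Weed and Bach (Sharp asymptotic and finite-sample rates of convergence of empirical measures in Wasserstein distance). For $\mu$ supported on a bounded set, their result uses an upper Wasserstein dimension $d^*_p(\mu)$, which is bounded above by the Minkowski dimension $d_{\mathrm M}(\mu)$. It gives, for $s > d^*_p(\mu)$ and $s > 2p$,
\[
\Expect{}{\cW_p^p(\mu,\mu^N)} \lesssim \diam(\cX)^p N^{-p/s}.
\]
Jensen's inequality then yields $\Expect{}{\cW_p(\mu,\mu^N)} \le \Expect{}{\cW_p^p(\mu,\mu^N)}^{1/p} \lesssim \diam(\cX) N^{-1/s}$. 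The requirement $s>2p$ is met automatically under the stated hypotheses: $d_{\mathrm M}(\mu)\ge 2$ for $p=1$, $d_{\mathrm M}(\mu)\ge 4$ for $p=2$, and $d_{\mathrm M}(\mu)\ge 2p$ in general, combined with $s>d_{\mathrm M}(\mu)$. This explains why these dimension thresholds appear in the statement. Alternatively, for claim \ref{cor:lipDmmseempiricalW2_claim3} one can reduce to the case $p=2$ via \eqref{eq:Wporder}, but then the threshold would be $4$ rather than $2p$, so I would use the result directly for each $p$.

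The main obstacle is the careful matching of hypotheses in that external theorem. Its statement involves $d_p^*$, defined through covering numbers and the lower bound $2p$, and its constants depend on the normalization of $\diam$; these must reduce to the stated Minkowski-dimension condition. One also has to check that measurability allows exchanging expectation with the deterministic bounds. The latter is straightforward because the bounds in Theorem~\ref{thm:lipDmmseW2} hold for every sample realization.
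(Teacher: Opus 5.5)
Your proposal is correct and follows the paper's proof. You apply Theorem~\ref{thm:lipDmmseW2} with $\nu=\mu^N$ (which satisfies \eqref{assum:XV} almost surely) and reduce all three claims to bounding $\Expect{}{\cW_p(\mu,\mu^N)}$ by a standard empirical-measure rate. The only difference is the reference for that rate. The paper invokes \cite[Corollary~1.2]{Boissard14}, whose requirement that the covering exponent exceed $2p$ is where the thresholds $d_{\mathrm{M}}(\mu)\geq 2p$ come from. Your use of \cite{Weed19}, together with $d_p^*(\mu)\le d_{\mathrm{M}}(\mu)$ (valid when $d_{\mathrm{M}}(\mu)\geq 2p$) and Jensen's inequality, works equally well.
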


\begin{remark}\label{rem:Wdiscrete}
The scaling in $N$ in the above bounds can be sharpened with a finer notion of dimension; see \cite[Theorem~1]{Weed19} where the authors also provide lower-bounds showing that the scaling is essentially tight in general. This can only improved if additional regularity assumptions are imposed. If $\mu$ is supported on a compact $d$-dimensional differentiable manifold and is absolutely continuous wrt the $d$-dimensional Hausdorff measure on it, then $d_{\mathrm{M}}(\mu_{\bX})=d$ and $s$ in the above bounds can be replaced by $d$; see \cite[Proposition~8 and 9]{Weed19}.
\end{remark}

\begin{proof}
It is sufficient to bound $\Expect{}{W_p(\mu,\mu^N)}$ and then invoke Theorem~\ref{thm:lipDmmseW2}. This bound follows from \cite[Corollary~1.2]{Boissard14} for $s$ as devised.
\end{proof}

	\subsection{Monotonicity properties}
	Monotonicity properties of the MMSE denoiser can be derived using Proposition~\ref{prop:jacDmu}.\\
	
	\begin{proposition}\label{prop:Dmumonotone}  
		Assume that \eqref{assum:Xcompact} and \eqref{assum:Xnoise} hold. Let $\mu\in\mathcal P(\mathcal X)$ satisfy \eqref{assum:XV}, and set $U:=\Pi\widetilde\Sigma_\Upsilon^{-1}\Pi^\top$. Then $\widehat D_\mu$ is monotone and $\frac{M^2}{\lambda_{\min}(\widetilde\Sigma_\Upsilon)}$-Lipschitz continuous on the Hilbert space $(V,\|\cdot\|_U)$. Moreover, $\log p_\mu$ is $\lambda_{\min}(\widetilde\Sigma_\Upsilon)^{-1}$-weakly convex on $(V,\norm{\cdot})$.
		\begin{proof}
			For any $\bu, \bv \in V$, we have
			\begin{align*}
				\dotp{\widehat{D}_\mu(\bu)-\widehat{D}_\mu(\bv)}{\bu-\bv}_U 
				&= \int_0^1 \dotp{\jac{\widehat{D}_{\mu}}(\bv+t(\bu-\bv))(\bu-\bv)}{\bu-\bv}_U dt \\
				&= \int_0^1 \dotp{\Sigma_{\bX|\bv+t(\bu-\bv)}U(\bu-\bv)}{\bu-\bv}_U dt \\
				&= \int_0^1 \dotp{U\Sigma_{\bX|\bv+t(\bu-\bv)}U(\bu-\bv)}{\bu-\bv} dt \\
				&= \int_0^1 \dotp{\Sigma_{\bX|\bv+t(\bu-\bv)}U(\bu-\bv)}{U(\bu-\bv)} dt \geq 0 .
			\end{align*}
			In the second equality, we used the derivative identity of Proposition~\ref{prop:jacDmu}, and in the last inequality that the covariance matrix $\Sigma_{\bX|\bz}$ is symmetric, semidefinite positive and that $U$ is symmetric and positive definite on $V$.\\
			Now, with similar arguments, we also have
			\begin{align*}
				\norm{\widehat{D}_\mu(\bu)-\widehat{D}_\mu(\bv)}_U
				&\leq \int_0^1 \norm{U^{1/2}\jac{\widehat{D}_{\mu}}(\bv+t(\bu-\bv))(\bu-\bv)} dt \\
				&= \int_0^1 \norm{U^{1/2}\Sigma_{\bX|\bv+t(\bu-\bv)}U(\bu-\bv)} dt \\
				&\leq \int_0^1 \norm{U^{1/2}\Sigma_{\bX|\bv+t(\bu-\bv)}U^{1/2}}dt ~ \norm{\bu-\bv}_U \\
				&\leq \frac{M^2}{\lambda_{\min}(\widetilde{\Sigma}_{\bUpsilon})}\norm{\bu-\bv}_U .
			\end{align*}\\
			Weak convexity is equivalent to hypomonotoncity of the score $\nabla \log p_\mu$ on $V$. The latter holds by monotonicity of $\widehat{D}_\mu$ on $V_U$ and Tweedie's formula \eqref{eq:tre},
			\begin{align*}
				&\lambda_{\min}(\widetilde{\Sigma}_{\bUpsilon})^{-1}\norm{\bu -\bv}^2 + \dotp{\nabla \log p_\mu(\bu)-\nabla \log p_\mu(\bv)}{\bu - \bv} \\
				&\geq \norm{\bu -\bv}_U^2 + \dotp{U^{-1}\nabla \log p_\mu(\bu)-U^{-1}\nabla \log p_\mu(\bv)}{\bu - \bv}_U = \dotp{\widehat{D}_\mu(\bu) - \widehat{D}_\mu(\bv)}{\bu - \bv}_U \geq 0 .
			\end{align*}

		\end{proof}
	\end{proposition}
	
	\begin{remark}\label{rem:stron_monotonicity}
		If $\Sigma_{\bX|\bz}$ is uniformly positive definite in $z$ on V, i.e. there exists $\alpha >0$ such that \begin{equation*}
			\dotp{\Sigma_{\bX|\bz}w}{w} \ge \alpha
		\end{equation*}
		for all $w\in V$ with $\norm{w}=1$, then the MMSE estimsator is $ \alpha \lambda_{\max}(\widetilde{\Sigma}_{\bUpsilon})^{-1}$-strongly monotone with respect to $\norm{\cdot}_U$. Consequently, $\log p_\mu$ is $\pa{\lambda_{\min}(\widetilde{\Sigma}_{\bUpsilon})^{-1} - \alpha \lambda_{\max}(\widetilde{\Sigma}_{\bUpsilon})^{-1}}$-weakly convex with respect to the standard euclidean norm. In particular, this implies strong convexity for $\alpha < \frac{\lambda_{\max}(\widetilde{\Sigma}_{\bUpsilon})}{\lambda_{\min}(\widetilde{\Sigma}_{\bUpsilon})} = \kappa\pa{\widetilde{\Sigma}_{\bUpsilon}}$.
	\end{remark}

	In the special case where $\widetilde{\Sigma}_{\bUpsilon}  = \sigma_{\bUpsilon}^2 I_r$, even more can be said due to symmetry. We recall that an operator $T: V \to V$ is $\beta$-cocoercive, $\beta > 0$, if
	\[
	\dotp{T(\bx) - T(\bz)}{\bx - \bz} \geq \beta\norm{T(\bx)-T(\bz)}^2, \qforallq \bx, \bz \in V .
	\]
	$\beta$-cocoercivity implies $1/\beta$-Lipschitz continuity. The converse is not true in general, unless $T$ is the gradient of a differentiable convex function, a fact known as the Baillon-Haddad theorem.
	\begin{proposition}\label{prop:cocoercive}
		Assume that \eqref{assum:Xcompact} holds, that $\widetilde{\Sigma}_{\bUpsilon}  = \sigma_{\bUpsilon}^2 I_r$, $\sigma_{\bUpsilon} > 0$, and that $\mu \in \cP(\cX)$ satisfies \eqref{assum:XV}. Then, the operator $\widehat{D}_\mu$ is $\lip{\widehat{D}_\mu}^{-1}$-cocoercive on $V$ with $\lip{\widehat{D}_\mu} \leq \frac{M^2}{\sigma_{\bUpsilon}^2}$.
		\begin{proof}
			In this case, $U=\sigma_{\bUpsilon}^{-2}\Pi \Pi^\top$ and thus $\dotp{\bx}{\bz}_U = \sigma_{\bUpsilon}^{-2}\dotp{\bx}{\bz}$ for all $\bx,\bz \in V$. Monotonicity on $V$ then follows from Proposition~\ref{prop:Dmumonotone} since
			\[
			\dotp{\widehat{D}_\mu(\bu)-\widehat{D}_\mu(\bv)}{\bu-\bv}
			= \sigma_{\bUpsilon}^{-2}\int_0^1 \dotp{\Sigma_{\bX|\bv+t(\bu-\bv)}(\bu-\bv)}{\bu-\bv} dt \geq 0 , \qforallq \bu,\bv \in V .
			\]
			From Lemma~\ref{lem:Dmulip}, $\widehat{D}_\mu$ is also $\lip{\widehat{D}_\mu}$-Lipschitz continuous on $V$ with $\lip{\widehat{D}_\mu} \leq \frac{M^2}{\sigma_{\bUpsilon}^2}$. Moreover, Tweedie's formula \eqref{eq:pretre} now reads,
			\begin{equation*}
				\widehat{D}_{\mu}(\bz) = \bz + \sigma_{\bUpsilon}^2 \nabla \log(p_\mu(\bz)) , \qforallq \bz \in V ,
			\end{equation*}
			that is, $\widehat{D}_{\mu}$ is the gradient of the function $\bz \in V \mapsto h(\bz) \eqdef \frac{1}{2}\norm{\bz}^2 + \sigma_{\bUpsilon}^2 \log(p_\mu(\bz))$. Thus monotonicity of $\widehat{D}_{\mu}$ is equivalent to convexity of $h$; see \cite[Proposition~17.7]{CombettesBauschke17}. It follows from the Baillon-Haddad theorem that $\widehat{D}_{\mu}$ is $\lip{\widehat{D}_\mu}^{-1}$-cocoercive; see \cite[Corollary~18.17]{CombettesBauschke17}.
		\end{proof}
	\end{proposition}
	
	To the best of our knowledge, the cocoercivity of the MMSE estimator has not been shown before. If $\lip{\widehat{D}_\mu}\le 1$, then $\widehat{D}_\mu$ is firmly nonexpansive. This property has recently been of interest in a number of studies, on the one side as a constraint for training a learned denoiser \cite{Terris_2020, BrediesLearningFirmlyNonexp}\cite{nair2024averaged}, but also as a tool in the convergence analysis of PnP methods \cite{Sun_2021, xu2020provable}. While the focus of these works is to ensure firm nonexpansiveness to obtain theoretical convergence guarantees, Proposition~\ref{prop:cocoercive} gives an additional justification for enforcing it, being a property of the MMSE denoiser.

	\subsection{Asymptotic behavior of the MMSE denoiser}\label{subsec:mmselimit}
	We conclude with an investigation of the asymptotic behavior of the MMSE denoiser as the noise $\bUpsilon$ vanishes. For simplicity, we focus on the case $\widetilde{\Sigma} = \sigma_{\bUpsilon}^2 I_r$. The general result can be found in appendix~\ref{append:ssymptotic}. For $\bz \in V$, let
	\[
	d(\bz) \eqdef \dist(\bz,\supp(\mu)) = \inf_{\bx\in \supp(\mu)} \norm{\bx-\bz}
	\]
	and
	\[
	\proj{\supp(\mu)}(\bz) \eqdef \enscond{\bx \in \supp(\mu)}{\norm{\bx-\bz} = d(\bz)}.
	\] 
	$\proj{\supp(\mu)}: V \to 2^{\supp(\mu)}$ is the set-valued orthogonal projector onto $\supp(\mu)$. To emphasize the dependence of the MMSE estimator on $\sigma_{\bUpsilon}$, we will use the notation $\widehat{D}^{\sigma_{\bUpsilon}}_{\mu}$ instead of $\widehat{D}_{\mu_\bX}$.
	
	\begin{proposition}\label{prop:limsigma}
		Assume that \eqref{assum:Xcompact} holds,  the covariance of $\bUpsilon$ is given by $\widetilde{\Sigma}_{\bUpsilon}  = \sigma_{\bUpsilon}^2 I_r$ with $\sigma_{\bUpsilon} > 0$, and that $\mu \in \cP(\cX)$ satisfies \eqref{assum:XV}. For $\bz \in V$ and a $\mu$-measurable set $\cA$, define 
		\[
		\mu_{\bz}^{\sigma_{\bUpsilon}}(\cA) = \frac{\int_{\cA} e^{-\frac{\norm{\bx-\bz}^2}{2\sigma_{\bUpsilon}^2}} d\mu(\bx)}{\int_{\cX} e^{-\frac{\norm{\bx-\bz}^2}{2\sigma_{\bUpsilon}^2}} d\mu(\bx)} .
		\]
		Then the following hold:
		\begin{enumerate}[label=(\roman*)]
			\item \label{prop:limsigma_claim1}
			$(\mu_{\bz}^{\sigma_{\bUpsilon}})_{\sigma_{\bUpsilon}>0}$ subsequentially converges with respect to the Wasserstein distance $\cW_p$ as $\sigma_{\bUpsilon} \to 0$ and each cluster point $\bar{\mu}_{\bz}$ satisfies
			\[
			\supp(\bar{\mu}_{\bz}) \subset \proj{\supp(\mu)}(\bz).
			\]
			\item \label{prop:limsigma_claim2}
			For all $\bz \in V$,
			\[
			\dist(\widehat{D}^{\sigma_{\bUpsilon}}_{\mu}(\bz),\widehat{D}^{0}(\bz)) \to 0 \qasq \sigma_{\bUpsilon} \to 0 ,
			\]
			where $\widehat{D}^{0}: V \to 2^{\conv{\supp(\mu)}}$ is the maximal monotone operator defined by
			\[
			\widehat{D}^{0}(\bz) = \conv{\proj{\supp(\mu)}(\bz)} \subset \conv{\supp(\mu)} .
			\]
		\end{enumerate}
		\begin{proof}  \,\\ 
			\ref{prop:limsigma_claim1} Let $h_{\bz}(\bx) = \norm{\bx-\bz}$. Since $\cX$ is compact by \eqref{assum:Xcompact}, and $\supp(\mu)$ is closed by definition, it is compact. Thus, $h_{\bz}$ attains its minimum on it, with value $d(\bz)$, and $\proj{\supp(\mu)}(\bz)$ is nonempty and compact. Moreover, as $h_{\bz}$ is continuous on $\supp(\mu)$, it is uniformly continuous there and so is $h_{\bz}^2$. \\
			Given $\epsilon > 0$, let
			\[
			\cU_{\bz}^{\epsilon} = \enscond{\bx \in \supp(\mu)}{h_\bz^2(\bx) \leq d^2(\bz) + \epsilon} .
			\] 
			By uniform continuity, there exists $\delta_\epsilon > 0$ such that for all $x^\star \in \proj{\supp(\mu)}(\bz)$,
			\[
			\supp(\mu) \cap \Ball(\bx^\star,\delta_\epsilon) \subset \cU_{\bz}^{\epsilon} .
			\]
			Let
			\[
			\cK_\bz^\epsilon = \supp(\mu)\cap\pa{\bigcup_{x^\star \in \proj{\supp(\mu)}(\bz)} \Ball(\bx^\star,\delta_\epsilon)} .
			\]
			This is a relatively open neighborhood of $\proj{\supp(\mu)}(\bz)$ in $\supp(\mu)$, and uniform continuity implies that
			\[
			\cK_\bz^\epsilon \subset \cU_{\bz}^{\epsilon} .
			\]
			Note that due to the separability of $\cX$, it holds for any open set $\cO$ that $\mu(\cO\cap\supp(\mu)^{c}) =0$. Thus, by definition of the support, we have $\mu(\cK_{\bz}^\epsilon) > 0$. Therefore,
			\begin{align*}
				\mu_{\bz}^{\sigma_{\bUpsilon}}(\supp(\mu) \setminus \cU_{\bz}^\epsilon) 
				&= \frac{\int_{\supp(\mu) \setminus \cU_{\bz}^{\epsilon}} e^{-\frac{h_\bz^2(\bx)-d^2(\bz)}{2\sigma_{\bUpsilon}^2}} d\mu(\bx)}{\int_{\cX} e^{-\frac{h_\bz^2(\bx)-d^2(\bz)}{2\sigma_{\bUpsilon}^2}} d\mu(\bx)} \\
				&\leq \frac{\int_{\supp(\mu) \setminus \cU_{\bz}^{\epsilon}} e^{-\frac{h_\bz^2(\bx)-d^2(\bz)}{2\sigma_{\bUpsilon}^2}} d\mu(\bx)}{\int_{\cK_\bz^{\epsilon/2}} e^{-\frac{h_\bz^2(\bx)-d^2(\bz)}{2\sigma_{\bUpsilon}^2}} d\mu(\bx)} \\
				&\leq \frac{e^{\frac{-\epsilon}{2\sigma_{\bUpsilon}^2}}\mu(\supp(\mu) \setminus \cU_{\bz}^\epsilon)}{e^{\frac{-\epsilon}{4\sigma_{\bUpsilon}^2}}\mu(\cK_{\bz}^{\epsilon/2})} 
				\leq \frac{e^{\frac{-\epsilon}{4\sigma_{\bUpsilon}^2}}\mu(\cX)}{\mu(\cK_\bz^{\epsilon/2})} .
			\end{align*}
			Passing to the limit as $\sigma_{\bUpsilon} \to 0$ we get
			\[
			\mu_{\bz}^{\sigma_{\bUpsilon}}(\supp(\mu) \setminus \cU_{\bz}^\epsilon) \to 0 .
			\]
			Compactness of $\cX$ implies that $\cP(\cX)$ is weak-$*$ compact by \cite[Theorem~15.11]{Aliprantis06}. Then, the family $(\mu_{\bz}^{\sigma_{\bUpsilon}})_{\sigma_{\bUpsilon} > 0}$ is relatively compact by Prokhorov's theorem. Thus, every weak-$*$ cluster point $\bar{\mu}_\bz$ is such that $\bar{\mu}_\bz(\supp(\mu) \setminus \cU^{\epsilon}_\bz) = 0$. Because $\epsilon>0$ is arbitrary and $\proj{\supp(\mu)}(\bz) = \bigcap_{\epsilon>0} \cU^\epsilon_\bz$, we get that $\supp(\bar{\mu}_\bz) \subset \proj{\supp(\mu)}(\bz)$. Indeed, for $\bx \notin \proj{\supp(\mu)}$, there must be $\epsilon >0$ with $\bx \in \pa{\cU_\bz^\epsilon}^{c}$. But since $\pa{\cU_\bz^\epsilon}^{c}$ is open and weak-*-convergence implies \begin{align*}
				\bar{\mu}_{\bz}\pa{\pa{\cU_\bz^\epsilon}^{c}} = \bar{\mu}_{\bz}\pa{\supp(\mu)\cap \pa{\cU_\bz^\epsilon}^{c}} + \bar{\mu}_{\bz}\pa{\supp(\mu)^{c}\cap \pa{\cU_\bz^\epsilon}^{c}} &= \bar{\mu}_{\bz}\pa{\supp(\mu)^{c}\cap \pa{\cU_\bz^\epsilon}^{c}} \\
				&\le \liminf_{\sigma_{\bUpsilon} \to 0} \mu_{\bz}^{\sigma_{\bUpsilon}}\pa{\supp(\mu)^{c}\cap \pa{\cU_\bz^\epsilon}^{c}} = 0,
			\end{align*} it must be $\bx \notin \supp(\bar{\mu}_{\bz})$.  As $\cX$ is compact, $\cW_p$ metrizes weak-$*$ convergence on $\cP(\cX)$ \cite[Theorem~6.9]{VillaniON09}. Therefore, subsequential weak-$*$ convergence is equivalent to that in $\cW_p$.\\
			
			\noindent\ref{prop:limsigma_claim2} For any subsequence $(\mu_{\bz}^{\sigma_{\bUpsilon,k}})_{k \in \bbN}$ (that we do not relabel) with weak-$*$ cluster point $\bar{\mu}_\bz$, we have from \ref{prop:limsigma_claim1},
			\[
			\widehat{D}^{\sigma_{\bUpsilon,k}}_{\mu}(\bz) = \int_{\cX} \bx d\mu_{\bz}^{\sigma_{\bUpsilon,k}}(\bx) \underset{k \to \infty}{\longrightarrow} \int_{\cX} \bx d\bar{\mu}_{\bz}(\bx) \in \conv{\proj{\supp(\mu)}(\bz)} = \widehat{D}^{0}(\bz) .
			\] 
			As this is true for any subsequence $(\mu_{\bz}^{\sigma_{\bUpsilon,k}})_{k \in \bbN}$, we get the limit claim. \\			
			Let us now prove that $\widehat{D}^{0}$ is maximal monotone. Let $(\bz,\bEta) \in \graph(\widehat{D}^{0})$ and $(\bz',\bEta') \in \graph(\widehat{D}^{0})$. By Carath\'eodory's theorem, $\bEta$ can be expressed as the convex combination
			\[
			\bEta = \sum_{i=1}^{r+1} \alpha_i \bEta_i \qwhereq \bEta_i \in \proj{\supp(\mu)}(\bz) \qandq \sum_{i=1}^{r+1}\alpha_i = 1, \alpha_i \geq 0 ~~ \forall i \in [r+1] .
			\]
			Similarly,
			\[
			\bEta' = \sum_{i=1}^{r+1} \beta_i \bEta_i' \qwhereq \bEta_i' \in \proj{\supp(\mu)}(\bz') \qandq \sum_{i=1}^{r+1}\beta_i = 1, \beta_i \geq 0 ~~ \forall i \in [r+1] .
			\]
			We then have
			\[
			\dotp{\bEta'-\bEta}{\bz'-\bz} =\dotp{\pa{\sum\limits_{j =1}^{r+1}\beta_j}\sum\limits_{i = 1}^{r+1}\alpha_i\bEta_i- \pa{\sum\limits_{i =1}^{r+1}\alpha_i}\sum\limits_{j = 1}^{r+1}\beta_j\bEta_j'}{\bz'-\bz} =\sum_{i,j=1}^{r+1} \alpha_i\beta_j \dotp{\bEta_j'-\bEta_i}{\bz'-\bz} .
			\]
			We know that $\proj{\supp(\mu)}$ is a monotone operator; see e.g., \cite[Example~20.12]{CombettesBauschke17}, whence monotonicity of $\widehat{D}^{0}$ follows. \\			
			We now turn to maximality. By Minty's theorem \cite[Theorem~21.1]{CombettesBauschke17}, $\widehat{D}^{0}$ is maximal monotone if and only if $I_r+\widehat{D}^{0}$ is surjective. That is, for every $\bEta \in V$, there exists $(\bz,\bb) \in \graph(\widehat{D}^{0})$ such that $\bEta = \bz + \bb$, or equivalently, for every $\bEta \in V$,
			\[
			\exists \bb \qstq \bb \in \widehat{D}^{0}(\bEta - \bb) .
			\]
			The latter statement amounts to proving the existence of a fixed point of the set-valued operator 
			\[
			T \eqdef \widehat{D}^{0}(\bEta - \cdot): \conv{\supp(\mu)} \to 2^{\conv{\supp(\mu)}} .
			\]
			This holds true thanks to Kakutani's fixed point theorem; see e.g., \cite[Corollary~1.12.1]{AubinCellina84}. Indeed, $T$ has non-empty convex compact images. In view of \cite[Corollary~1.1.1]{AubinCellina84}, it remains to show that $\graph(T)$ is closed, that is, for any sequence $(\bz_k,\bEta_k) \in \graph(T)$ such that $(\bz_k,\bEta_k) \to (\bar{\bz},\bar{\bEta})$, then $(\bar{\bz},\bar{\bEta}) \in \graph(T)$. By Carath\'eodory's theorem, for each $\bEta_k$, we can write
			\[
			\bEta_k = \sum_{i=1}^{r+1} \alpha_{i,k} \bEta_{k,i} \qwhereq \bEta_{k,i} \in \proj{\supp(\mu)}(\bEta-\bz_k) \qandq \sum_{i=1}^{r+1}\alpha_{i,k} = 1, \alpha_{i,k} \geq 0 ~~ \forall i \in [r+1] .
			\]
			By compactness of the standard simplex and the values of $\proj{\supp(\mu)}$, we can pass to a subsequence so that $\alpha_{i,k} \to \bar{\alpha}_i$ and $\bEta_{k,i} \to \bar{\bEta}_i$, where $\bar{\alpha}$ belongs to the standard simplex. Since $\proj{\supp(\mu)}$ has a closed graph by \cite[Example~1.20]{RockafellarWets}, we have $\bar{\bEta}_i \in \proj{\supp(\mu)}(\bEta-\bar{\bz})$. In turn,
			\[
			\bar{\bEta} = \lim_{k \to \infty} \bEta_k = \sum_{i=1}^{r+1} \bar{\alpha}_i \bar{\bEta}_i \in \conv{\proj{\supp(\mu)}(\bEta-\bar{\bz})} = T(\bar{\bz}) .
			\]
			This concludes the proof.
		\end{proof}
	\end{proposition}

	\section{Proofs of the main results}\label{sec:proofs}
	
	\subsection{Proof of Theorem~\ref{thm:pointwiserecovbndpnpfbs}}
	
	In order to show \eqref{eq:pointwise_recovery_1}, we need the following lemma.
	
	\begin{lemma}\label{lem:bound_MMSE_denoising_error}
		Assume that \eqref{assum:Xcompact}--\eqref{assum:Xnoise} hold, and that $\mu_{\bX}$ fulfills \eqref{assum:XV}. Then there exists $\epsilon >0$ such that  
		\begin{equation}
			\norm{\widehat{D}_{\mu_\bX^N}\pa{\bx} - \bx} \leq \sqrt{\frac{\lambda_{\max}(\widetilde{\Sigma}_{\bUpsilon})}{\lambda_{\min}(\widetilde{\Sigma}_{\bUpsilon})}}\dist(\bx,\cD_N^\bX) +  2M\pa{N-1} e^{-\frac{\epsilon}{2\lambda_{\max}\pa{\widetilde{\Sigma}_{\bUpsilon}}}} .
		\end{equation}
		\begin{proof}
			Let $\cI_{\bx} \eqdef \Argmin_{i \in [N]} \norm{\bx - \bx^i}_{\widetilde{\Sigma}_{\bUpsilon}^+}$ and $d_\star \eqdef \min_{i \in [N]} \norm{\bx - \bx^i}_{\widetilde{\Sigma}_{\bUpsilon}^+}$. For all $i \in \cI_{\bx}$, it holds $\norm{\bx-\bx^i}_{\widetilde{\Sigma}_{\bUpsilon}^+} = d_\star$. From \eqref{eq:Ndmmsegauss} and \eqref{eq:Dmmsemu}, and since $\bx \in V$ and $\cD_N^\bX \subset V$ by \eqref{assum:XV}, we have
			\begin{align*}
				\norm{\widehat{D}_{\mu_\bX^N}\pa{\bx} - \bx} 
				&= \norm{\frac{\sum_{i=1}^N \bx^i e^{-\frac{\norm{\bx-\bx^i}_{\widetilde{\Sigma}_{\bUpsilon}^+}^2}{2}}}{\sum_{i=1}^Ne^{-\frac{\norm{\bx-\bx^i}_{\widetilde{\Sigma}_{\bUpsilon}^+}^2}{2}}} - \bx} 
				\leq \frac{\sum_{i=1}^N \norm{\bx-\bx^i} e^{-\frac{\norm{\bx-\bx^i}_{\widetilde{\Sigma}_{\bUpsilon}^+}^2}{2}}}{\sum_{i=1}^N e^{-\frac{\norm{\bx-\bx^i}_{\widetilde{\Sigma}_{\bUpsilon}^+}^2}{2}}} \\
				&\leq \frac{\sum_{i \in \cI_{\bx}}\norm{\bx-\bx^i} + \sum_{i \not\in\cI_{\bx}} \norm{\bx-\bx^i} e^{-\frac{\pa{\norm{\bx-\bx^i}_{\widetilde{\Sigma}_{\bUpsilon}^+}^2 - d_\star^2}}{2}}}{|\cI_{\bx}| + \sum_{i \not\in\cI_{\bx}} e^{-\frac{\pa{\norm{\bx-\bx^i}_{\widetilde{\Sigma}_{\bUpsilon}^+}^2 - d_\star^2}}{2}}} \\
				&\leq \frac{1}{|\cI_{\bx}|}\sum_{i \in \cI_{\bx}}\norm{\bx-\bx^i} + \frac{1}{|\cI_{\bx}|}\sum_{i \not\in\cI_{\bx}} \norm{\bx-\bx^i} e^{-\frac{\pa{\norm{\bx-\bx^i}_{\widetilde{\Sigma}_{\bUpsilon}^+}^2 - d_\star^2}}{2}}.
			\end{align*}
			Let $i_\star \in \Argmin_{i \in [N]}\norm{\bx-\bx^i}$, i.e. $\norm{\bx-\bx^{i_\star}} = \dist(\bx,\cD_N^\bX)$. Since for all $i\in\cI_x$, $\norm{\bx-\bx^i}_{\widetilde{\Sigma}_{\bUpsilon}^+} = d_\star \leq \norm{\bx-\bx^{i_\star}}_{\widetilde{\Sigma}_{\bUpsilon}^+}$, we have		
			\begin{align*}
                \norm{\bx-\bx^i}
                \leq \sqrt{\lambda_{\max}(\tilde{\Sigma}_{\bUpsilon})}\norm{\bx-\bx^i}_{\widetilde{\Sigma}_{\bUpsilon}^+}
                \leq \sqrt{\lambda_{\max}(\tilde{\Sigma}_{\bUpsilon})}\norm{\bx-\bx^{i_\star}}_{\widetilde{\Sigma}_{\bUpsilon}^+}
                \leq \sqrt{\frac{\lambda_{\max}(\tilde{\Sigma}_{\bUpsilon})}{\lambda_{\min}(\tilde{\Sigma}_{\bUpsilon})}}\dist(\bx,\cD_N^\bX) .
            \end{align*}
            In turn,
            \[
			\norm{\widehat{D}_{\mu_\bX^N}\pa{\bx} - \bx} 
			\leq \sqrt{\frac{\lambda_{\max}(\tilde{\Sigma}_{\bUpsilon})}{\lambda_{\min}(\tilde{\Sigma}_{\bUpsilon})}}\dist(\bx,\cD_N^\bX) + \frac{1}{|\cI_{\bx}|}\sum_{i \not\in\cI_{\bx}} \norm{\bx-\bx^i} e^{-\frac{\pa{\norm{\bx-\bx^i}_{\widetilde{\Sigma}_{\bUpsilon}^+}^2 - d_\star^2}}{2}}.
			\] 
			If $|\cI_{\bx}|=N$, the claim follows. Otherwise, there exists $\epsilon > 0$ such that $\norm{\bx-\bx^i}_{\widetilde{\Sigma}_{\bUpsilon}^+}^2 - d_\star^2 \geq \epsilon$ for all $i \not\in \cI_{\bx}$, and therefore, using also \eqref{assum:Xcompact},
			\begin{align*}
			\norm{\widehat{D}_{\mu_\bX^N}\pa{\bx} - \bx} 
			&\leq \sqrt{\frac{\lambda_{\max}(\tilde{\Sigma}_{\bUpsilon})}{\lambda_{\min}(\tilde{\Sigma}_{\bUpsilon})}}\dist(\bx,\cD_N^\bX) +  \diam(\cX)\pa{N/|\cI_{\bx}|-1} e^{-\frac{\epsilon}{2\lambda_{\max}\pa{\widetilde{\Sigma}_{\bUpsilon}}}} \\
			&\leq \sqrt{\frac{\lambda_{\max}(\tilde{\Sigma}_{\bUpsilon})}{\lambda_{\min}(\tilde{\Sigma}_{\bUpsilon})}}\dist(\bx,\cD_N^\bX) +  2M\pa{N-1} e^{-\frac{\epsilon}{2\lambda_{\max}\pa{\widetilde{\Sigma}_{\bUpsilon}}}} .
			\end{align*}
		\end{proof}
	\end{lemma}
	\begin{remark}\label{rem:epsN}
		Note that $\epsilon$ in Lemma~\ref{lem:bound_MMSE_denoising_error} depends on $N$. This dependence can be removed by choosing $\epsilon >0$ and repeating the arguments of the proof with 
		\begin{equation*}
			\cI_{\bx,\epsilon} = \enscond{i\in [N]}{\norm{\bx-\bx^i}_{\widetilde{\Sigma}_{\bUpsilon}^+}^2 \le d_\star^2+\epsilon}.
		\end{equation*}
		instead of $\cI_{\bx}$, to obtain the estimate 
		\begin{equation*}
			\norm{\widehat{D}_{\mu_\bX^N}\pa{\bx} - \bx} \leq \sqrt{\frac{\lambda_{\max}(\tilde{\Sigma}_{\bUpsilon})}{\lambda_{\min}(\tilde{\Sigma}_{\bUpsilon})}}\dist(\bx,\cD_N^\bX) +  2M\pa{N-1} e^{-\frac{\epsilon}{2\lambda_{\max}\pa{\widetilde{\Sigma}_{\bUpsilon}}}} + \sqrt{\lambda_{\max}(\tilde{\Sigma}_{\bUpsilon})\epsilon} .
		\end{equation*}
	\end{remark}
	\,\\
	We are now in the position to prove Theorem~\ref{thm:pointwiserecovbndpnpfbs}.
	\begin{proof}[Proof of \eqref{eq:pointwise_recovery_1}]
		Note that due to \eqref{assum:BA}, conditon \eqref{assum:restinj} is equivalent to
		\[
		\lambda_{\min}\pa{BA;T_{\conv{\supp(\mu_\bX)}}(\bx)} > 0 .
		\]
		Therefore, using $\bx_k\in \conv{\supp(\mu)}$, we obtain for $\gamma >0$ that 
		\begin{align*}
			\norm{(I_n - \gamma B A)(\bx_k - \bx)}^2 
			&= \norm{\bx_k - \bx}^2 - 2 \gamma\dotp{BA(\bx_k - \bx)}{\bx_k - \bx} + \gamma^2\norm{BA(\bx_k - \bx)}^2 \\
			&\leq \rho(\gamma)^2\norm{\bx_k - \bx}^2 ,
		\end{align*}
		where $\rho(\gamma)^2 = 1-2\gamma\lambda_{\min}\pa{BA;T_{\conv{\supp(\mu_\bX)}}(\bx)}+\gamma^2\lambda_{\max}(BA)^2$. By \eqref{eq:condnum} it holds $\rho(\gamma)\ge 0$. In particular, $\rho(\gamma)$ is minimal with value $\rho(\gamma) = \sqrt{1 - \kappa\pa{BA}^{-2}} <1$ for $\gamma$ chosen according to \eqref{eq:optgamma}. Since $q = \frac{M^2}{\lambda_{\min}\pa{\widetilde{\Sigma}_{\bUpsilon}}} \sqrt{1 - \kappa\pa{BA}^{-2}} <1$ by \eqref{eq:cond_nonexpansive} or \eqref{eq:cond_welldefined}, we obtain from \eqref{eq:bndpnpfbs_1} that
		\begin{align}
			\norm{\bx_{k} - \bx}
			&\leq q\norm{\bx_{k-1} - \bx} + \lip{\widehat{D}_{\mu_{\bX}^N}}\gamma\norm{B}\norm{\be} + \norm{\widehat{D}_{\mu_\bX^N}\pa{\bx} - \bx} \nonumber\\
			&\leq {q}^{k+1}\norm{\bx_0 - \bx} + \frac{1-{q}^{k}}{1-{q}}\pa{\lip{\widehat{D}_{\mu_{\bX}^N}}\gamma \norm{B}\norm{\be} + \norm{\widehat{D}_{\mu_\bX^N}\pa{\bx} - \bx}} . \label{eq:bndpnpfbsnonexpansive_1}
		\end{align}
		Estimate \eqref{eq:pointwise_recovery_1} now follows from Lemma~\ref{lem:bound_MMSE_denoising_error}.
	\end{proof}
	
	Estimate \eqref{eq:pointwise_recovery_2} is a consequence of \eqref{eq:pointwise_recovery_1} and the following lemma applied with $\tau=2$. 
	
	\begin{lemma}\label{lem:bnddistxD_N}
		Assume that \eqref{assum:Xcompact} holds and that $\mu_{\bX}$ satisfies \eqref{assum:XV} and has a density $g$ with respect to the volume measure on $\bbR^r$ such that $g$ is bounded away from zero. Then, for any $\tau > 0$ there exists $N(\tau) \in \N$ such that for all $N \geq N(\tau)$, 
		\begin{equation*}
			\max_{\bz \in \supp(\mu_{\bX})} \dist(\bz,\cD_N^\bX) \leq K_1(1+\tau)^{1/r}\pa{\frac{\log(N)}{N}}^{1/r} ,
		\end{equation*}
		holds with probability at least $1 - K_2 N^{-\tau}$ on the sampling of $\cD_N^\bX$, where $K_1 > 0$ and $K_2 > 0$ are two constants that depend only on $r$ and $M$. 
		\begin{proof}
			Denote for short $\cS \eqdef \supp(\mu_{\bX}) \subset V$ and identify $V$ with $\bbR^r$.
			
			Let $S_\delta = \ens{\bz_1,\bz_2,\ldots,\bz_{N(\cS,\delta)}}$ be a $\delta$-net of $\cS$ in the Euclidean distance on $\bbR^r$, i.e., $\cS \subseteq \bigcup_{\bz \in S_\delta} \clBall_\delta(\bz)$. We then have
			\begin{align*}
				\max_{\bz \in \cS} \dist(\bz,\cD_N^\bX) \leq \max_{j \in [N(\cS,\delta)]}\max_{\bz \in \clBall_\delta(\bz_j)} \dist(\bz,\cD_N^\bX) .
			\end{align*}
			For each $j \in [N(\cS,\delta)]$, let $\bQ_j$ be the number of random variables in $\cD_N^\bX$ falling into $\clBall_{\delta}(\bz_j)$. $\bQ_j$ is a Binomial random variable with parameters $(N,p_j)$, where $p_j = \mu_{\bX}(\clBall_\delta(\bz_j)) \geq c \vol(\clBall_\delta(0)) = c \delta^r\vol(\clBall(0))$, where $c = \inf_{\cS} g > 0$ by assumption, and we used the shorthand notation $\clBall(0)$ for the unit ball centered at zero. 
            Thus, using the union bound, we get
			\begin{align*}
				&\Pr\pa{\max_{\bz \in \cS} {\dist(\bz,\cD_N^\bX)} > 2\delta} 
				\leq \Pr\pa{\max_{j \in [N(\cS,\delta)]}{\max_{\bz \in \clBall_\delta(x_j)} {\dist(\bz,\cD_N^\bX)}} > 2\delta} \\
				&\leq \sum_{j \in [N(\cS,\delta)]}\Pr\pa{\max_{\bz \in \clBall_\delta(x_j)} {\dist(\bz,\cD_N^\bX)} > 2\delta} \\
				&\leq \sum_{j \in [N(\cS,\delta)]}\Pr\pa{\bQ_j = 0} = \sum_{j \in [N(\cS,\delta)]}(1-p_j)^N \leq N(\cS,\delta) \pa{1-c \delta^r\vol(\Ball(0))}^N .
			\end{align*}
			Since $\cS$ is compact, we know that $\cS \subseteq \clBall_M(0)$. It then follows from \cite[Lemma~4.10]{Pisier89} that
			\[
			N(\cS,\delta) = N(\cS/M,\delta/M) \leq \pa{1+\frac{2M}{\delta}}^r .
			\]
			We therefore arrive at the bound
			\begin{align*}
				\Pr\pa{\max_{x \in \cS} \dist(\bz,\cD_N^\bX) > 2\delta} 
				\leq \pa{1+\frac{2M}{\delta}}^r \pa{1-c \delta^r \vol(\Ball(0))}^N
				\leq \frac{(\delta+2M)^r}{\delta^r}e^{-Nc\delta^r\vol(\Ball(0)}.
			\end{align*}
			Take $\delta^r = \frac{(1+\tau)}{c\vol(\Ball(0))}\frac{\log(N)}{N}$, for any $\tau > 0$. Thus, for $N$ large enough, one has $\delta \leq M$, and in turn the above bound becomes
			\begin{align*}
				\Pr\pa{\max_{x \in \cS} \dist(\bz,\cD_N^\bX) > 2\delta} 
				&\leq c (3M)^r \vol(\Ball(0))(1+\tau)^{-1} \frac{N^{-\tau}}{\log(N)} \\
				&\leq c (3M)^r \vol(\Ball(0)) N^{-\tau} .
			\end{align*}
			By the Stirling formula, we have
			\[
			\vol(\Ball(0))=\frac{2\pi^{r/2}}{r\Gamma(r/2)} = \frac{1}{\sqrt{\pi r}} \pa{\frac{2\pi e}{r}}^{r/2} e^{\theta(r/2)/(6r)}
			\] 
			with $\theta(r/2) \in [0,1]$. Plugging this into the expression of $\delta$ concludes the proof.
		\end{proof}
	\end{lemma}
	
	\subsection{Proof of Theorem~\ref{thm:Wrecovbndpnpfbs}}
	\begin{proof}[Proof of Lemma~\ref{lem:uniformization_restricted_injectivity}]
			Denote for short $\cU_\bx \eqdef T_{\conv{\cS}}(\bx) \cap \sph^{n-1}$ for any $\bx \in \cS \eqdef {\supp(\mu_\bX)}$, and note that it holds 
			\begin{equation*}
				\bigcup\limits_{\bx \in {\cS}}\cU_{\bx} = \pa{\bigcup\limits_{\bx \in {\cS}} T_{\conv{\cS}}(\bx)} \cap \sph^{n-1}.
			\end{equation*}
			By definition, we have for any $\bx \in \cS$,
			\[
			T_{\conv{\cS}}(\bx) \subset \Par(\conv{\cS}) ,
			\]
			and in turn
			\[
			\bigcup\limits_{\bx \in {\cS}} T_{\conv{\cS}}(\bx) \subset \Par(\conv{\cS}) .
			\]
			Moreover, for any $\bx \in \ri(\conv{\cS})$, we know that (see e.g., \cite[Theorem~6.45]{CombettesBauschke17})
			\[
			T_{\conv{\cS}}(\bx) = \Par(\conv{\cS}) .
			\]
			Altogether, since by \eqref{assum:restinj_unif} it holds that $\ri(\conv{\cS})\cap \cS$ is non-empty we deduce that
			\[
			\bigcup\limits_{\bx \in {\cS}} T_{\conv{\cS}}(\bx) = \Par(\conv{\cS}) .
			\]
			We conclude that $\bigcup\limits_{\bx \in {\cS}}\cU_{\bx}=\Par(\conv{\cS}) \cap \sph^{n-1}$ is compact since it is the unit sphere of the finite-dimensional vector space $\Par(\conv{\cS})$. We thus obtain 
			\begin{equation*}
				\widetilde{\lambda}_{\min}(BA) = \inf_{\bz \in \bigcup\limits_{\bx \in \cS} \cU_\bx} \norm{BA \bz} = \min\limits_{\bz \in \Par(\conv{\cS})\cap \sph^{n-1}}\norm{BA \bz} >0,
			\end{equation*}	
			where the last inequality follows from assumption \eqref{assum:restinj_unif}.
	\end{proof}
	
	\begin{proof}[Proof of Theorem~\ref{thm:Wrecovbndpnpfbs}]
		Let $\bE$ and $\bUpsilon$ be such that $\widetilde{\gamma} B\bE$ and $\bUpsilon$ are jointly distributed according to their optimal coupling, and similarly for $\bX$ and $\bX^N$. Let $\bY = A\bX+\bE$. We then have, conditioned on the samples $\cD_N^\bX$, that
		\[
		\cW_2(\widehat{\mu}_k^N,\mu_{\bX}) \leq \sqrt{\Expect{}{\norm{\bX_k - \bX}^2\mid\cD_N^\bX}} ,
		\]
		where $\bX_k$ is the iterate defined as in \eqref{eq:xkx0y} with $\bX_0$ and $\bY$ as above.  Using \eqref{eq:bndpnpfbs_2} and Minkowski's inequality, we obtain
		\begin{align*}
			&\sqrt{\Expect{}{\norm{\bX_k - \bX}^2\mid\cD_N^\bX}}
			\leq \widetilde{q}\sqrt{\Expect{}{\norm{\bX_{k-1} - \bX}^2\mid\cD_N^\bX}} + \frac{M^2}{\lambda_{\min}(\widetilde{\Sigma}_{\bUpsilon})}\sqrt{\Expect{}{\norm{\widetilde{\gamma} B\bE - \bUpsilon}^2}} \\
			&+ \sqrt{\Expect{}{\norm{\widehat{D}_{\mu_\bX^N}\pa{\bX^N + \bUpsilon} - \bX^N}^2\mid\cD_N^\bX}} 
			+ \pa{1+\frac{M^2}{\lambda_{\min}(\widetilde{\Sigma}_{\bUpsilon})}}\sqrt{\Expect{}{\norm{\bX^N - \bX}^2\mid\cD_N^\bX}} \\
			&= \widetilde{q}\sqrt{\Expect{}{\norm{\bX_{k-1} - \bX}^2\mid\cD_N^\bX}} + \frac{M^2}{\lambda_{\min}(\widetilde{\Sigma}_{\bUpsilon})}\cW_2(\mu_{\widetilde{\gamma} B\bE},\mu_{\bUpsilon}) \\
			&+ \sqrt{\Expect{}{\norm{\widehat{D}_{\mu_\bX^N}\pa{\bX^N + \bUpsilon} - \bX^N}^2\mid\cD_N^\bX}} 
			+ \pa{1+\frac{M^2}{\lambda_{\min}(\widetilde{\Sigma}_{\bUpsilon})}}\cW_2(\mu_{\bX},\mu_{\bX}^N) .
		\end{align*}
		Iterating, we obtain
		\begin{align}\label{eq:estimate_proof_Wasserstein}
		\cW_2(\widehat{\mu}_k^N,\mu_{\bX}) 
		&\leq \widetilde{q}^{k+1}\sqrt{\Expect{}{\norm{\bX_{0} - \bX}^2\mid\cD_N^\bX}} + \frac{1-\widetilde{q}^{k}}{1-\widetilde{q}}\Bigg[\frac{M^2}{\lambda_{\min}(\widetilde{\Sigma}_{\bUpsilon})}\cW_2\pa{(\widetilde{\gamma} B)_{\#}\mu_{\bE},\mu_{\bUpsilon}} \nonumber\\
			&+ \sqrt{\Expect{}{\norm{\widehat{D}_{\mu_\bX^N}\pa{\bX^N + \bUpsilon} - \bX^N}^2\mid\cD_N^\bX}} 
			+ \pa{1+\frac{M^2}{\lambda_{\min}(\widetilde{\Sigma}_{\bUpsilon})}}\cW_2(\mu_{\bX},\mu_{\bX}^N)\Bigg].
		\end{align}
		Now taking the expectation over the sampling of $\cD_N^\bX$ and using Jensen's inequality, we obtain
		\begin{align}\label{eq:estimate_proof_expect_Wasserstein}
		\Expect{\cD_N^\bX}{W_2(\widehat{\mu}_k^N,\mu_{\bX})}
		&\leq \widetilde{q}^{k+1}\sqrt{\Expect{}{\norm{\bX_{0} - \bX}^2}} + \frac{1-\widetilde{q}^{k}}{1-\widetilde{q}}\Bigg[\frac{M^2}{\lambda_{\min}(\widetilde{\Sigma}_{\bUpsilon})}\cW_2((\widetilde{\gamma} B)_{\#}\mu_{\bE},\mu_{\bUpsilon}) \nonumber\\
			&+ \sqrt{\Expect{}{\norm{\widehat{D}_{\mu_\bX^N}\pa{\bX^N + \bUpsilon} - \bX^N}^2}} 
			+ \pa{1+\frac{M^2}{\lambda_{\min}(\widetilde{\Sigma}_{\bUpsilon})}}\Expect{\cD_N^\bX}{\cW_2(\mu_{\bX},\mu_{\bX}^N)}\Bigg].
		\end{align}
		We bound the last three summands in \eqref{eq:estimate_proof_expect_Wasserstein} individually. Since $\bE$ and $\bUpsilon$ are zero-mean Gaussian, the first term is given by the Bures metric valid even in the degenerate case thanks to \cite[Theorom~2.1 and Theorom~2.4]{Gelbrich90}, 
		\begin{align*}
			\cW_2((\widetilde{\gamma} B)_{\#}\mu_{\bE},\mu_{\bUpsilon}) = \pa{\widetilde{\gamma}^{^2}\tr{B \Sigma_\bE B^\top} + \tr{\widetilde{\Sigma}_{\bUpsilon}} 
				- 2\widetilde{\gamma}\tr{\widetilde{\Sigma}_{\bUpsilon}^{1/2}\Pi^\top B \Sigma_\bE B^\top\Pi\widetilde{\Sigma}_{\bUpsilon}^{1/2}}^{1/2}}^{1/2}.
		\end{align*}
		For the second term,  we argue as in the proof of Theorem~\ref{thm:lipDmmseW2}\ref{thm:lipDmmseW2_claim2}, using optimality of the MMSE denoiser $\widehat{D}_{\mu_\bX^N}$ (see \eqref{eq:mmseoptbnd}) to get that
	\begin{align*}
	\Expect{}{\norm{\widehat{D}_{\mu_\bX^N}\pa{\bX^N + \bUpsilon} - \bX^N}^2} 
	&= \Expect{\cD_N^\bX}{\norm{\widehat{D}_{\mu_\bX^N}\pa{\bX^N + \bUpsilon} - \bX^N}^2\mid \cD_N^\bX} \\
	&= \Expect{\cD_N^\bX}{\cR(\widehat{D}_{\mu_{\bX}^N})} \leq \Expect{\cD_N^\bX}{\norm{\bUpsilon}^2} =  \tr{\widetilde{\Sigma}_{\bUpsilon}}
			\leq \lambda_{\max}(\widetilde{\Sigma}_{\bUpsilon}) r .
		\end{align*}
		For the sampling error term $\Expect{\cD_N^\bX}{\cW_2(\mu_{\bX},\mu_{\bX}^N)}$, we argue as in Corollary~\ref{cor:lipDmmseempiricalW2} to get that it scales as $O(\diam(\cX)N^{-1/s})$. Plugging these  estimates into \eqref{eq:estimate_proof_expect_Wasserstein}, we get the claimed bound.
%
	\end{proof}
	
	\subsection{Proof of Proposition~\ref{prop:high_prob_Wasserstein}}
	From the proof of Theorem~\ref{thm:Wrecovbndpnpfbs}, we only need to show that $\cW_2(\mu_{\bX},\mu_{\bX}^N)$  concentrates well around its expectation. For this, we apply \cite[Proposition~20]{Weed19} to get that with probability larger than $1-\exp(-2Nt^2)$,
	\[
	\cW_2^2(\mu_{\bX},\mu_{\bX}^N) \leq \Expect{\cD_N^\bX}{\cW_2^2(\mu_{\bX},\mu_{\bX}^N)} + \diam(\cX)^2 t .
	\]
	The term $\Expect{\cD_N^\bX}{\cW_2^2(\mu_{\bX},\mu_{\bX}^N)}$ needs some care because of the square inside the expectation (Jensen's inequality is in the opposite direction). For this, we use the reverse inequality of \eqref{eq:Wporder} which holds thanks to boundedness of $\cX$ (see \ref{assum:Xcompact}). 
	
\begin{lemma}
Let $\mu,\nu$ be probability measures supported on $\cX$ which satisfies \eqref{assum:Xcompact}. Then, for $1 \leq p \leq q< +\infty$,
\[
\cW_q(\mu,\nu) \leq \diam(\cX)^{1-p/q}\cW_p(\mu,\nu)^{p/q}.
\]
\end{lemma}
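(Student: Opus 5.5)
The plan is a one-coupling argument that turns the $q$-th moment into a $p$-th moment by a pointwise inequality. First fix an optimal coupling $\pi$ of $(\mu,\nu)$ for the cost $\norm{\bu-\bv}^p$; it exists because $\cX$ is compact by \eqref{assum:Xcompact}, so $\cP(\cX\times\cX)$ is weakly compact and the cost is continuous. This coupling need not be optimal for $\cW_q$, but it is admissible in the infimum defining $\cW_q$. Hence
\[
\cW_q(\mu,\nu)^q \leq \int_{\cX\times\cX} \norm{\bu-\bv}^q \, d\pi(\bu,\bv) .
\]

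Next, on the support of $\pi$ we have $\bu,\bv\in\cX$, so $\norm{\bu-\bv}\leq \diam(\cX)$. Since $q-p\geq 0$, this gives the pointwise bound
\[
\norm{\bu-\bv}^q = \norm{\bu-\bv}^{q-p}\norm{\bu-\bv}^p \leq \diam(\cX)^{q-p}\norm{\bu-\bv}^p .
\]
Integrating against $\pi$ and using optimality of $\pi$ for the $p$-cost yields $\cW_q(\mu,\nu)^q \leq \diam(\cX)^{q-p}\cW_p(\mu,\nu)^p$. Taking $q$-th roots gives exactly $\cW_q(\mu,\nu)\leq \diam(\cX)^{1-p/q}\cW_p(\mu,\nu)^{p/q}$.

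Nothing here is a serious obstacle. The only points to check are the existence of the optimal $p$-coupling and the fact that it lives on $\cX\times\cX$, which follows because its marginals are supported in $\cX$. If one prefers to avoid existence altogether, take an $\epsilon$-optimal coupling for the $p$-cost, run the same estimate, and let $\epsilon\to 0$. The degenerate cases $\diam(\cX)=0$ and $p=q$ are trivial.
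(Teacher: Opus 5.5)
Your proposal is correct and is essentially the paper's proof. Both evaluate the $q$-cost on an optimal $p$-coupling, use the pointwise bound $\norm{\bu-\bv}^q \leq \diam(\cX)^{q-p}\norm{\bu-\bv}^p$, and take $q$-th roots. The paper additionally invokes an optimal $q$-coupling, which your direct use of the infimum definition of $\cW_q$ makes unnecessary.
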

The proof uses basic properties of the Wasserstein distance and we include it for self-containedness.

\begin{proof}
Let $\pi_p\in\Pi(\mu,\nu)$ (resp. $\pi_q\in\Pi(\mu,\nu)$) the optimal coupling for $\cW_p(\mu,\nu)$ (resp. $\cW_q(\mu,\nu)$). Both exist thanks to compactness. Since $\mu$ and $\nu$ are supported on $\cX$, we have
\[
\norm{\bx-\bz}^q = \norm{\bx-\bz}^{q-p}\norm{\bx-\bz}^p \leq \diam(\cX)^{q-p}\norm{\bx-\bz}^p .
\]
Therefore
\begin{align*}
W_q^q(\mu,\nu)
&= \int_{\cX \times \cX} \norm{\bx-\bz}^q d\pi_q(\bx,\bz) \\
&\leq \int_{\cX \times \cX} \norm{\bx-\bz}^q d\pi_p(\bx,\bz) \\
&\leq \diam(\cX)^{q-p} \int_{\cX \times \cX} \norm{\bx-\bz}^p d\pi_p(\bx,\bz) = \diam(\cX)^{q-p} \cW_p^p(\mu,\nu) .
\end{align*}
\end{proof}

It then follows that with the same probability as above,
\[
\cW_2^2(\mu_{\bX},\mu_{\bX}^N) \leq \diam(\cX)\Expect{\cD_N^\bX}{\cW_1(\mu_{\bX},\mu_{\bX}^N)} + \diam(\cX)^2 t .
\]
We now use \cite[Corollary~1.2]{Boissard14} to see that
\[
\Expect{\cD_N^\bX}{\cW_1(\mu_{\bX},\mu_{\bX}^N)} \lesssim \diam(\cX) N^{-1/s'} 
\]
for $s' > d_{\mathrm{M}}(\mu_\bX) \geq 2$. Therefore, with the same probability as above,
\[
\cW_2(\mu_{\bX},\mu_{\bX}^N) \lesssim \diam(\cX)N^{-1/(2s')} + \diam(\cX) \sqrt{t} . 
\]
Setting $s=2s'$ and $t=N^{-1/s'}$, and plugging this into \eqref{eq:estimate_proof_Wasserstein}, we conclude.
	
	\subsection{Proof of Lemma~\ref{lem:NN_existence_uniform}}
			Since the identity is $1$-Lipschitz, the claim follows from \cite[Theorem~3.9]{Petersen23} and a scaling argument.
		
        Let  $\widetilde{\cS} \eqdef\diam(\cS)^{-1}\cS$ and choose $b\in \R^r$ such that $\widetilde{\cS}-\{b\}\subset [0,1]^r$ holds. Since the identity is $1$-Lipschitz, it follows from \cite[Theorem~3.9]{Petersen23} that there exists a neural network $D$ with $W(D) \lesssim rR^{d_{\cS}+1}\pa{\frac{\epsilon}{\diam(\cS)}}^{-d_{\cS}}$, and $L(D) = \lceil \log_2(d_{\cS}) \rceil + \lceil \log_2(r) \rceil + c$ for some universal constant $c\in \N$ such that \begin{equation*}
            \norm{D(\bx) - \bx} \leq \diam(\cS)^{-1}\epsilon 
        \end{equation*}
        for all  $\bx \in \widetilde{\cS}-\{b\}$. Moreover, according to \cite[Remark~3.12]{Petersen23}, there exists $\widetilde{p}\in \N$, which only depends on $\cS$, such that the weights of $D$ are bounded proportionally to $\pa{\epsilon \diam(\cS)^{-1}}^{-\widetilde{p}}$. To be precise, the upper bound of the weights can be chosen as the maximum of the optimal scaling order in \cite[Lemma~A.3]{Petersen2018}, and the inverse grid size that is necessary to approximate the identity with precision $\diam(\cS)^{-1}\epsilon $ using linear finite element functions. Since both of these quantitities are polynomial with respect to the inverse approximation error $\diam(\cS)\epsilon^{-1}$ and only dependent on $\cS$, such $\widetilde{p}$ must exist. We therefore obtain that the neural network $\widehat{D}_\epsilon = \diam(\cS)D$ approximates the identity on $\cS$ with precision $\epsilon$. In particular, $\widehat{D}_\epsilon$ has the same number of weights and layers as $D$. Since $\norm{b}\le \sqrt{r}\frac{M}{\diam(S)}$ and $\diam(\cS) \le 2M$, the weights of $\widehat{D}_\epsilon$ are bounded proportionally to $\sqrt{r}\pa{\frac{\epsilon}{M}}^{-p}$ with $p = \widetilde{p}+1$.
    
	\subsection{Proof of Theorem~\ref{thm:recovery_pointwise_NN}}
		Analogously to the proof of Theorem~\ref{thm:pointwiserecovbndpnpfbs}, we obtain \begin{equation*}
			\norm{\bx_{k} - \bx} \le {q}^{k+1}\norm{\bx_0 - \bx} + \frac{1-{q}^{k}}{1-{q}}\pa{L_{\widehat{D}_\epsilon^N}\gamma \norm{B}\norm{\be} + \norm{\widehat{D}_{\epsilon}^N\pa{\bx} - \bx}}.
		\end{equation*}
		To bound the last summand, we let $i\in\Argmin\limits_{j\in[N]}\norm{\bx^j-\bx}$ and make the estimate
		\begin{align}\label{eq:proofNNpointw1}
			\norm{\widehat{D}_\epsilon^N(\bx) -\bx} &\le \norm{\widehat{D}_\epsilon^N(\bx)-\widehat{D}_\epsilon^N(\bx^i+\bupsilon^i)}+ \norm{\widehat{D}_{\epsilon}^N(\bx^i+\bupsilon^i)-\bx^i} + \dist(\bx,\cD_N^\bX)\notag\\
			&\le \lip{\widehat{D}_\epsilon^N}\norm{\bx-(\bx^i+\bupsilon^i)} + \norm{\widehat{D}_{\epsilon}^N(\bx^i+\bupsilon^i)-\bx^i} + \dist(\bx,\cD_N^\bX)\notag\\
			&\le \lip{\widehat{D}_\epsilon^N} \norm{\bupsilon^i} + \sqrt{\sum\limits_{j = 1}^N \norm{\widehat{D}_\epsilon^N(\bx^j+\bupsilon^j)-\bx^j}^2} + \pa{1+\lip{\widehat{D}_\epsilon^N}}\dist(\bx,\cD_N^\bX).
		\end{align}
		By the optimality of $\widehat{D}_\epsilon^N$ in \eqref{eq:loss} and Lemma~\ref{lem:NN_existence_uniform} it holds \begin{align*}
			\sum\limits_{j = 1}^N \norm{\widehat{D}_\epsilon^N(\bx^j+\bupsilon^j)-\bx^j}^2 &\le \sum\limits_{j = 1}^N \norm{\widehat{D}_\epsilon(\bx^j+\bupsilon^j)-\bx^j}^2 \\
			&\le 2\sum\limits_{j = 1}^N \pa{\norm{\widehat{D}_\epsilon(\bx^j+\bupsilon^j)-\widehat{D}_\epsilon(\bx^j)}^2 + \norm{\widehat{D}_\epsilon(\bx^j)-\bx^j}^2} \\
			&\le 2\pa{{\lip{\widehat{D}_\epsilon}^2}\sum\limits_{j = 1}^N\norm{\bupsilon^j}^2 + N\epsilon^2}.
		\end{align*}
		Hence, estimate \eqref{eq:proofNNpointw1} implies \begin{equation*}
			\norm{\widehat{D}_\epsilon^N(\bx) -\bx} \le \pa{\lip{\widehat{D}_\epsilon^N} +\sqrt{2}\lip{\widehat{D}_\epsilon} }\pa{\sum\limits_{j = 1}^N\norm{\bupsilon^j}^2}^{\frac{1}{2}} + \sqrt{2N}\epsilon + \pa{1+\lip{\widehat{D}_\epsilon^N}}\dist(\bx,\cD_N^\bX) .
		\end{equation*}
		It remains to bound $f(\bupsilon^1,\ldots,\bupsilon^N)$, where $(\bupsilon^1,\ldots,\bupsilon^N) \mapsto f(\bupsilon^1,\ldots,\bupsilon^N) = \pa{\sum\limits_{j = 1}^N\norm{\bupsilon^j}^2}^{\frac{1}{2}}$. Since $\bUpsilon = {\widetilde{\Sigma}^{1/2}_{\bUpsilon}} \bZ$, for $\bZ \sim \cN(0,I_r)$, we have
		\[
		f(\bupsilon^1,\ldots,\bupsilon^N) \leq \sqrt{\lambda_{\max}(\widetilde{\Sigma}_{\bUpsilon})}f(\bz^1,\ldots,\bz^N) .
		\] 
		Observe that $f$ is $1$-Lipschitz, and since the $\bz^i$'s are independent zero-mean standard Gaussian vectors, we can use the concentration in Gauss space, see e.g., \cite{vershynin_high-dimensional_2018}, to get that
		\[
		f(\bupsilon^1,\ldots,\bupsilon^N) \leq \Expect{}{f(\bupsilon^1,\ldots,\bupsilon^N)} + t \leq \sqrt{Nr} + t
		\] 
		with probability at least $1-\exp(-t^2/2)$. Taking $t = \sqrt{4\log(N)}$, we get \eqref{eq:rcvbndNNptw1}.	
		Applying Lemma~\ref{lem:bnddistxD_N} with $t = N^{-2}$ and using a union bound implies \eqref{eq:rcvbndNNptw2}.

	\section{Conclusions and future work}
	We analyzed the forward-backward version of the PnP method using an empirical version of the optimal MMSE denoiser. In particular, we proved error bounds that loosen typical contractiveness assumptions on the denoiser for sufficiently well-conditioned problems. The crucial observation within our analysis, however, is that the denoiser should not be chosen independently of the forward model. Instead, it needs to be constructed such that it removes the transformed version of the observation noise, which occurs within the 'descent step' of the algorithm. We additionally show cocoercivity of the MMSE estimator, which implies firm nonexpansiveness up to scaling. In future work, we would like to study how this constraint can be rigorously and efficiently incorporated in the training of denoisers. Tightness of our bounds is also another interesting avenue for a future work.
	
	\subsection*{Acknowledgments}
	TW and JF were supported by the CAESAR AI4Science project from the University of Caen. All authors received support from the PHC HONG KONG-PROCORE Programme under number 52663RH.
	
	\appendices
	
	\section{General case of Lemma~\ref{lem:uniformization_restricted_injectivity}}\label{app:general_uniformization}
	Recall, see \cite{Luc_2008}, that the core outer limit of a sequence of sets $\pa{\cC_n}_{n\in \N}$ is defined via \begin{equation}
		{\limsup_{n\to\infty}}^\circ\, \cC_n = \left\{ x = \lim\limits_{n\to\infty} \lambda_n x_n: x_n \in \cC_n ,\ \lambda_n \uparrow \infty  \right\}.
	\end{equation}
	We show a generalized version of Lemma~\ref{lem:uniformization_restricted_injectivity}.
	
	\begin{lemma}
		Assume that \eqref{assum:Xcompact} holds and that $\ker(BA) \cap T_{\conv{\supp(\mu_\bX)}}(\bx) = \{0\}$ for all $\bx \in \supp(\mu_{\bX})$. Further assume that for each sequence $(\bx_n)_{n \in \N}$ it holds that \begin{equation}\label{eq:assum_coreouterlimit}
			{\limsup\limits_{n \to \infty}}^\circ\, T_{\conv{\supp(\mu_\bX)}}(\bx_n) \subset \bigcup\limits_{\bx\in \cS} T_{\conv{\supp(\mu_\bX)}}(\bx).
		\end{equation}
		Then $\widetilde{\lambda}_{\min}(BA) >0$.
		\begin{proof}
			We first show that $\bigcup\limits_{\bx\in \cS} T_{\conv{\supp(\mu_\bX)}}(\bx)$ is closed. Thus, let $(\bz_n)_{n\in\N}$ be a sequence in $T_{\conv{\supp(\mu_\bX)}}(\bx_n)$ with limit $\bz$. Hence, there exist $\lambda_n>0$ and $\by_n\in\conv{\supp(\mu_{\bX})} $ such that $\bz_n = \lambda_n(\by_n-\bx_n) $. We consider two cases. First, if $\lambda_n$ is unbounded, we can pass to a subsequence such that $\lambda_n$ increases to $\infty$. Hence $\bz \in {\limsup\limits_{n \to \infty}}^\circ\, T_{\conv{\supp(\mu_\bX)}}(\bx_n)$, and thus $\bz \in \bigcup\limits_{\bx\in \cS(\bx)} T_{\conv{\supp(\mu_\bX)}}(\bx)$. Otherwise, if $\lambda_n$ is bounded, we can pass to a subsequence and assume that $\bx_n$ converges to some $\bx \in \supp(\mu_{\bX})$ by compactness. Consequently, $\lambda_n(\by_n-\bx)$ converges to $\bz$, since \begin{equation*}
				\lambda_n(\by_n-\bx_n)-\lambda_n(\by_n-\bx) = \lambda_n(\bx-\bx_n) \overset{n\to \infty}{\to}0.
			\end{equation*}
			Thus, $\bz \in T_{\conv{\supp(\mu_\bX)}}(\bx)$. Therefore $\bigcup\limits_{\bx\in \cS} T_{\conv{\supp(\mu_\bX)}}(\bx)$ is closed and consequently \begin{equation*}
				\pa{\bigcup\limits_{\bx\in \cS} T_{\conv{\supp(\mu_\bX)}}(\bx)} \cap \sph^{n-1} =  \bigcup\limits_{\bx\in \cS}\pa{ T_{\conv{\supp(\mu_\bX)}}(\bx) \cap \sph^{n-1}}
			\end{equation*}
			is compact. We conclude \begin{equation*}
				\widetilde{\lambda}_{\min}(BA) = \inf\limits_{\bz \in \bigcup\limits_{\bx\in \cS}\pa{ T_{\conv{\supp(\mu_\bX)}}(\bx) \cap \sph^{n-1}}} \norm{BAz} =  \min\limits_{\bz \in \bigcup\limits_{\bx\in \cS}\pa{ T_{\conv{\supp(\mu_\bX)}}(\bx) \cap \sph^{n-1}}} \norm{BAz} >0.
			\end{equation*}
		\end{proof}
	\end{lemma}

	\section{General case of Lemma~\ref{prop:limsigma}}\label{append:ssymptotic}
	Let $\pa{\widetilde{\Sigma}_{\bUpsilon}^n}_{n\in\N}$ be a sequence of non-zero, symmetric, positive semidefinite matrices whose image is contained in $V$, such that  $\lim\limits_{n \to \infty}\widetilde{\Sigma}_{\bUpsilon}^n = 0$ holds and set \begin{equation*}
		\Lambda_n \eqdef \norm{\pa{\widetilde{\Sigma}_{\bUpsilon}^n}^+}^{-1} \pa{\widetilde{\Sigma}_{\bUpsilon}^n}^+.
	\end{equation*}
	Recall that for any symmetric positive-semidefinte matrix $\Lambda$, we set \begin{equation*}
		\norm{\bw}_\Lambda \eqdef \sqrt{\dotp{\Lambda \bw}{\bw}}. 
	\end{equation*}
	Further, fix $\bz \in V$ and let \begin{equation*}
		d_\Lambda(\bz){\eqdef}  \inf\limits_{\bx \in \supp(\mu)} \norm{\bx-\bz}_\Lambda,
	\end{equation*}
	and \begin{equation*}
		\projop^\Lambda_{\supp(\mu)}(\bz) \eqdef \enscond{\bx \in \supp(\mu)}{\norm{\bx-\bz}_\Lambda = d_\Lambda(\bz)}.
	\end{equation*}
	As in Proposition~\ref{prop:limsigma} define \begin{equation*}
		\mu_{\bz}^n(\cA) = \frac{\int_{\cA} e^{-\frac{1}{2}(\bx-\bz)^\top\pa{\widetilde{\Sigma}_{\bUpsilon}^n}^+(\bx-\bz)}d\mu(\bx)}{\int_{\cX} e^{-\frac{1}{2}(\bx-\bz)^\top\pa{\widetilde{\Sigma}_{\bUpsilon}^n}^+(\bx-\bz)}d\mu(\bx)}
	\end{equation*}
	for $\bz \in V$. 
	\begin{proposition}
		Assume that \eqref{assum:Xcompact} holds and that $\mu$ satisfies \eqref{assum:XV}.
		\begin{enumerate}[label=(\roman*)]
			\item 			$(\mu_{\bz}^n)_{n \in \N}$ subsequentially converges with respect to the Wasserstein distance $\cW_p$ and each cluster point $\bar{\mu}_{\bz}$ satisfies
			\[
			\supp(\bar{\mu}_{\bz}) \subset \projop^\Lambda_{\supp(\mu)}(\bz).
			\]
			\item 			For all $\bz \in V$,
			\[
			\dist(\widehat{D}_{\mu_{\bz}^n}(\bz),\widehat{D}^{0}(\bz)) \to 0 \qasq \sigma_{\bUpsilon} \to 0 ,
			\]
			where $\widehat{D}^{0}: V \to 2^{\conv{\supp(\mu)}}$ is the maximal monotone operator defined by
			\[
			\widehat{D}^{0}(\bz) = \conv{\projop^\Lambda_{\supp(\mu)}(\bz)} \subset \conv{\supp(\mu)} .
			\]
		\end{enumerate}
		\begin{proof}
			Upon passing to a subsequence, assume that $\lim\limits_{n\to \infty}\Lambda_n = \Lambda$ for a symmetric, positive-semidefinite matrix $\Lambda$ with $\norm{\Lambda} = 1$. Let $h_{\bz}(x) = \norm{\bx-\bz}_{\Lambda}$. By the compactness of $\supp(\mu)$, $h_{\bz}$, $h^2$ are uniformly continuous and $h(\bx)$ attains its minimium $d_\Lambda(\bz)$. Consequently, $\projop^\Lambda_{\supp(\mu)}(\bz)$ is non-empty and compact. For $\epsilon>0$ let \begin{equation*}
				\cU_{\bz}^\epsilon = \enscond{\bx \in \supp(\mu)}{h_{\bz}^2(\bx)\le d_\Lambda^2(\bz) +\epsilon}.
			\end{equation*}
			By uniform continuity, there exists $\delta_\epsilon >0$ such that \begin{equation*}
				\supp(\mu)\cap\Ball(\bx^\star,\delta_\epsilon) \subset \cU_{\bz}^\epsilon
			\end{equation*}
			for all $\bx^\star \in \projop^\Lambda_{\supp(\mu)}(\bz)$. Let \begin{equation*}
				\cK_{\bz}^\epsilon \eqdef \supp(\mu)\cap  \bigcup\limits_{\bx^\star \in \projop^\Lambda_{\supp(\mu)}(\bz)}\Ball(\bx^\star,\delta_\epsilon) \subset \cU_{\bz}^\epsilon.
			\end{equation*}
			Thus, by definition of the support and the separability of $\cX$, we have $\mu(\cK_{\bz}^\epsilon) > 0$. Further, the compactness of $\cX$ implies that $(\bx-\bz)^\top(\Lambda_n-\Lambda)(\bx-\bz)$ converges uniformly to $0$. Hence, for $n$ large enough it holds $\abs{(\bx-\bz)^\top(\Lambda_n-\Lambda)(\bx-\bz)} \le \frac{\epsilon}{4}$ for all $\bx\in \cX$. We can therefore estimate {\renewcommand{\jot}{12pt}\begin{align*}
					\mu_{\bz}^n(\supp(\mu)\setminus \cU_{\bz}^\epsilon) &= \frac{\int_{\supp(\mu)\setminus \cU_{\bz}^\epsilon} e^{-\frac{1}{2}(\bx-\bz)^\top\pa{\widetilde{\Sigma}_{\bUpsilon}^n}^+(\bx-\bz)}d\mu(\bx)}{\int_{\cX} e^{-\frac{1}{2}(\bx-\bz)^\top\pa{\widetilde{\Sigma}_{\bUpsilon}^n}^+(\bx-\bz)}d\mu(\bx)}\\
					&= \frac{\int_{\supp(\mu)\setminus \cU_{\bz}^\epsilon} e^{-\frac{1}{2}\norm{\pa{\widetilde{\Sigma}_{\bUpsilon}^n}^+}(\bx-\bz)^\top\Lambda_n(\bx-\bz)}d\mu(\bx)}{\int_{\cX} e^{-\frac{1}{2}\norm{\pa{\widetilde{\Sigma}_{\bUpsilon}^n}^+}(\bx-\bz)^\top\Lambda_n(\bx-\bz)}d\mu(\bx)}\\
					&= \frac{\int_{\supp(\mu)\setminus \cU_{\bz}^\epsilon} e^{-\frac{1}{2}\norm{\pa{\widetilde{\Sigma}_{\bUpsilon}^n}^+}(\bx-\bz)^\top(\Lambda_n-\Lambda)(\bx-\bz)} e^{-\frac{1}{2}\norm{\pa{\widetilde{\Sigma}_{\bUpsilon}^n}^+}(\bx-\bz)^\top\Lambda(\bx-\bz)}d\mu(\bx)}{\int_{\cX}e^{-\frac{1}{2}\norm{\pa{\widetilde{\Sigma}_{\bUpsilon}^n}^+}(\bx-\bz)^\top(\Lambda_n-\Lambda)(\bx-\bz)} e^{-\frac{1}{2}\norm{\pa{\widetilde{\Sigma}_{\bUpsilon}^n}^+}(\bx-\bz)^\top\Lambda(\bx-\bz)}d\mu(\bx)}\\
					&\le \frac{\int_{\supp(\mu)\setminus \cU_{\bz}^\epsilon} e^{\frac{\epsilon}{8}\norm{\pa{\widetilde{\Sigma}_{\bUpsilon}^n}^+}} e^{-\frac{1}{2}\norm{\pa{\widetilde{\Sigma}_{\bUpsilon}^n}^+}(\bx-\bz)^\top\Lambda(\bx-\bz)}d\mu(\bx)}{\int_{\cX}e^{-\frac{\epsilon}{8}\norm{\pa{\widetilde{\Sigma}_{\bUpsilon}^n}^+}} e^{-\frac{1}{2}\norm{\pa{\widetilde{\Sigma}_{\bUpsilon}^n}^+}(\bx-\bz)^\top\Lambda(\bx-\bz)}d\mu(\bx)}\\
					&= e^{\frac{\epsilon}{4}\norm{\pa{\widetilde{\Sigma}_{\bUpsilon}^n}^+}} \pa{\frac{\int_{\supp(\mu)\setminus \cU_{\bz}^\epsilon} e^{-\frac{1}{2} \norm{\pa{\widetilde{\Sigma}_{\bUpsilon}^n}^+} (h_{\bz}^2(x) - d^2_{\Lambda}(\bz)) }d\mu(\bx)}{\int_{\cX} e^{-\frac{1}{2} \norm{\pa{\widetilde{\Sigma}_{\bUpsilon}^n}^+} (h_{\bz}^2(x) - d^2_{\Lambda}(\bz)) }d\mu(\bx)}}\\
					&\le e^{\frac{\epsilon}{4}\norm{\pa{\widetilde{\Sigma}_{\bUpsilon}^n}^+}} \pa{\frac{\int_{\supp(\mu)\setminus \cU_{\bz}^\epsilon} e^{-\frac{1}{2} \norm{\pa{\widetilde{\Sigma}_{\bUpsilon}^n}^+} (h_{\bz}^2(x) - d^2_{\Lambda}(\bz)) }d\mu(\bx)}{\int_{\cK_{\bz}^{\epsilon/4}} e^{-\frac{1}{2} \norm{\pa{\widetilde{\Sigma}_{\bUpsilon}^n}^+} (h_{\bz}^2(x) - d^2_{\Lambda}(\bz)) }d\mu(\bx)}}\\
					&\le e^{\frac{\epsilon}{4}\norm{\pa{\widetilde{\Sigma}_{\bUpsilon}^n}^+}} \frac{e^{-\frac{3\epsilon}{8}\norm{\pa{\widetilde{\Sigma}_{\bUpsilon}^n}^+}}\mu(\supp(\mu)\setminus \cU_{\bz}^\epsilon)}{\mu(\cK_{\bz}^{\epsilon/4})} = \frac{e^{-\frac{\epsilon}{8}\norm{\pa{\widetilde{\Sigma}_{\bUpsilon}^n}^+}}\mu(\cX)}{\mu(\cK_{\bz}^{\epsilon/4})} .
			\end{align*}}
			Since $\lim\limits_{n\to\infty} \widetilde{\Sigma}_{\bUpsilon}^n = 0$, we have $\lim\limits_{n\to\infty}\norm{\pa{\widetilde{\Sigma}_{\bUpsilon}^n}^+} = \infty$ and thus $\lim\limits_{n\to\infty} \mu_{\bz}^n(\supp(\mu)\setminus \cU_{\bz}^\epsilon) = 0$. From here we continue as in the proof of Proposition~\ref{prop:limsigma}.
		\end{proof}
	\end{proposition}
	

	\begin{remark}
		Note that even if $\projop^{\Lambda_n}_{\supp(\mu)}(\bz)$ converges as a sequence of set, its limit man not coincide with $\projop^\Lambda_{\supp(\mu)}(\bz)$, e.g. if $\rank{\Lambda} < \liminf\limits_{n\to \infty}\rank{\Lambda_n}$. 
	\end{remark}

	\bibliographystyle{plain}
	\bibliography{sample}
\end{document}